\newcommand{\be}{\begin{equation}}
\newcommand{\ee}{\end{equation}}
\newcommand{\ba}{\begin{array}}
\newcommand{\ea}{\end{array}}
\newcommand{\beas}{\begin{eqnarray*}}
\newcommand{\eeas}{\end{eqnarray*}}
\newcommand{\bea}{\begin{eqnarray}}
\newcommand{\eea}{\end{eqnarray}}
\newcommand{\ome}{\Omega}
\newcommand{\blem}{\begin{lemma}}
\newcommand{\elem}{\end{lemma}}
\newcommand{\bthe}{\begin{theorem}}
\newcommand{\ethe}{\end{theorem}}
\newcommand{\bprop}{\begin{proposition}}
\newcommand{\eprop}{\end{proposition}}
\begin{document}

\title{Local Averaging Type a Posteriori Error Estimates for the Nonlinear
Steady-state Poisson-Nernst-Planck Equations}




\author{Ying Yang$^1$
 \and Ruigang Shen$^2$
 \and Mingjuan Fang$^3$
 \and Shi Shu$^{4,*}$
}


\institute{ \Letter \quad Shi Shu \\
             {\color{white}{ }}\quad~ shushi@xtu.edu.cn \\
             \\
             {\color{white}{ }}\quad~ Ying Yang \\
             {\color{white}{ }}\quad~ yangying@lsec.cc.ac.cn \\
             \\
             {\color{white}{ }}\quad~ Ruigang Shen \\
             {\color{white}{ }}\quad~ shenruigang@163.com    \\
              \\
             {\color{white}{ }}\quad~ Mingjuan Fang \\
             {\color{white}{ }}\quad~ mingjuanfang2012@163.com  \\
             \\
         {\color{white}}$^1$ \quad School of Mathematics and Computational Science, Guangxi Colleges and Universities Key Laboratory of Data
         {\color{white}} \quad~ Analysis and Computation, Guilin University of Electronic Technology, Guilin 541004, People's Republic of China
         \\
          \\
         {\color{white}}$^2$ \quad School of  Mathematics and Computational Science, Hunan Key Laboratory for Computation and Simulation in {\color{white}} \quad~ Science and Engineering, Xiangtan University, Xiangtan  411105, People's Republic of China
          \\
          \\
         {\color{white}}$^3$ \quad School of Mathematics and Computational Science, Guilin University of Electronic Technology, Guilin 541004, \\ {\color{white}} \quad~ Guangxi, People's Republic of  China
         \\
         \\
         {\color{white}}$^{4,*}$  \textbf{Corresponding author.} Hunan Key Laboratory for Computation and Simulation in Science and Engineer-\\
         {\color{white}} \quad~ ing, Key Laboratory of Intelligent Computing and Information Processing of Ministry of Education, Xiangtan \\
          {\color{white}} \quad~ University, Xiangtan 411105, People's Republic of China
}

\date{Received: date / Accepted: date}

\maketitle

\begin{abstract}
 The a posteriori error estimates are studied for a class of nonlinear stead-state Poisson-Nernst-Planck equations, which are a coupled system consisting of the Nernst-Planck equation and the Poisson equation. Both the global upper bounds and the local lower bounds of the error estimators are obtained by using a local averaging operator. Numerical experiments are given to confirm the reliability and efficiency of the error estimators.

\keywords{Poisson-Nernst-Planck equations \and a posteriori error estimates \and finite element \and local averaging \and nonlinear}


\subclass{65N15 \and 65N30}

\end{abstract}

\section{Introduction}{\label{sec1}
 \noindent
 In this paper, we study the a posteriori error estimates for a class of nonlinear steady-state Poisson-Nernst-Planck (PNP) equations.
 The classic PNP equations were originally proposed by Nernst \cite{W.Nernst1889} and Planck \cite{M.Planck1890} which were used to describe the mass conservation of ions and the process of the electrostatic diffusion-reaction.
 As an important mathematical model to describe the ion transport, PNP equations have been widely applied to study the transport of charged particles in semiconductors \cite{J.Jer1996,S.Sel1984}, electrochemical systems \cite{M.Bazant2009,G.Rich2007,M.van2010}, the process of the electrostatic diffusion-reaction \cite{B.Z.Lu2011,bzl2007}, and ion conversion between biological membrane channels \cite{rdcoa2005,asing2009}, etc.

 However, the classical PNP equations have some drawbacks in simulating the physical or biological phenomenon in some practical problems. For example, the PNP model cannot reflect the effects caused by the ion size effect when it is used to simulate the experimental data of the ion channel. However, these effects are of great importance in determining selectivity of channels and the properties of ionic solutions in general \cite{TLHorng2012}. In order to observe and study the biochemical phenomena in the experiment more precisely and then analyze the corresponding diffusion phenomena and principles in detail, some modified PNP equations are presented to deal with the existing limitations. Lu and Zhou \cite{B.Z.Lu2011} proposed a class of nonlinear PNP equations including the ion size effect. Compared with the classic PNP equations, the nonlinear PNP equations are more effective in simulating the biomolecular diffusion-reaction processes. By taking the protein (ion channel) structure into account, Hyon et al. \cite{Y.K.Hyon2014} developed a class of nonlinear PNP system for ion channel. Compared with the primitive PNP model, these modifications in PNP models cause strong nonlinearity, which brings many difficulties in analysis and computation.

 Due to the coupling between the electrostatic potential and concentrations of the ionic species, the PNP system can hardly be solved analytically. Hence, there appears a lot of literature on numerical methods for PNP equations, including the finite difference method \cite{A.Flavell2014,D.He2016,H.Liu2014}, the finite volume method \cite{Cha-peng2004,Bess2014}, and the finite element method \cite{B.Z.Lu2011,Lubz2010,H.D.Gao2017}, etc. In terms of error analysis, there are some work on the finite element method. In \cite{yanglu2013}, Yang and Lu presented a finite element error analysis for a type of steady-state PNP equations modeling the electrodiffusion of ions in a solvated biomolecular system. Sun et al. \cite{Y.Z.Sun2016} analyzed the a priori error estimates of the finite element approximation to a type of time-dependent PNP equations, in which a fully implicit nonlinear Crank-Nicolson scheme is studied and the optimal $H^1$ norm error estimate is obtained for both the ion concentration and electrostatic potential. Gao and He \cite{H.D.Gao2017} constructed a linearized conservative finite element method to discrete the PNP system with zero Neumann boundary conditions and established unconditionally optimal error estimates in $L^2$ norm. The superconvergence analysis of finite element method for the time-dependent PNP equations is studied by Shi and Yang in \cite{Shidy2019}. Besides, in order to obtain the optimal error estimates in $L^2$ norm for both the electrostatic potential and the ionic concentrations, a mixed finite element method is also studied for PNP equations, see \cite{HeSun2017,HeSun2018} for more details. Recently, by introducing a similar projection operator as in \cite{H.D.Gao2017}, Shen et al. \cite{shen2019} presented the optimal error estimate in $L^2$ norm for both the semi- and full implicit nonlinear schemes for the time-dependent PNP equations.

 Although there has been some work on the a priori error analysis of the finite element method for PNP equations, to the best of authors' knowledge, there is no work on the a posteriori error analysis for PNP equations.
 The main purpose of this paper is to provide a complete a posteriori error analysis for the finite element approximation to a class of nonlinear steady-state PNP equations. We consider the following generic nonlinear PNP problem
 \begin{align} \label{MPNP-model}
  \left\{\begin{array}{l}
  \mathcal{L}(p^i,\phi) = -\nabla\cdot\big( \alpha(x,p^i)\nabla p^i
   + \beta(x,p^i) + \gamma(x,p^i)\nabla\phi \big)+g(x,p^i)=0, ~~\mbox{in}~~ \Omega,~1\leq i\leq n,\\
  -\nabla\cdot(\epsilon(x)\nabla\phi) - \displaystyle{\sum_{i=1}^n} q^ip^i=f, ~~\mbox{in}~~ \Omega,
  \end{array}\right.
 \end{align}
 with homogeneous Dirichlet boundary conditions
 \begin{align} \label{MPNP-model-bd0}
  p^i=\phi = 0,~~\mbox{on}~~ \partial\Omega,
 \end{align}
 where $\Omega \subset \mathbb{R}^2$ is a polygonal domain with a Lipschitz boundary $\partial\Omega$, $p^i$ is the concentration of the $i$-th ionic species with charge $q^i$ and $\phi$ is the electrostatic potential. The coefficients $\alpha(x,y):\bar{\Omega}\times \mathbb{R}^1\rightarrow \mathbb{R}^1$, $\beta(x,y):\bar{\Omega}\times \mathbb{R}^1\rightarrow \mathbb{R}^2$,  $\gamma(x,y):\bar{\Omega}\times \mathbb{R}^1\rightarrow \mathbb{R}^1$, $g(x,y):\bar{\Omega}\times \mathbb{R}^1\rightarrow \mathbb{R}^1$ and the dielectric coefficient $\epsilon(x):\bar{\Omega}\times \mathbb{R}^1\rightarrow \mathbb{R}^1$ are smooth functions.
 This work focuses on proposing and analyzing the a posteriori error estimates for the nonlinear stead-state PNP problem \eqref{MPNP-model}-\eqref{MPNP-model-bd0}.

 In general, there are two types of a posteriori error estimators, the gradient recovery-type a posteriori error estimator and the residual-type a posteriori error estimator. Compared with the residual-type a posteriori error estimator, the recovery-type a posteriori error estimator based on the gradient recovery operator is simpler in implementation. In this paper, by using a local averaging operator which is an extension of the gradient recovery operator, we derive a local averaging type a posteriori error estimates for the nonlinear PNP problem \eqref{MPNP-model}-\eqref{MPNP-model-bd0}.
 Then the global upper bounds and the local lower bounds of the error estimators are obtained for both the electrostatic potential and concentrations. A corresponding adaptive finite element algorithm is designed for the nonlinear PNP equations. Numerical experiments verify the efficiency and reliability of the error estimators derived in this paper.

 The rest of this paper is organized as follows. In section \ref{sec2}, the basic notations for Sobolev space and some useful preliminary results for the finite element approximation are introduced. In section \ref{sec-posteriori}, we present the global upper bounds and the local lower bounds both for the electrostatic potential and concentrations. Based on the a posterior error estimators, a corresponding adaptive finite element algorithm is also proposed in this section. In section \ref{sec-Numer-experiments}, numerical experiments are reported to support our theoretical analysis. Finally, in section \ref{sec-conclusion}, some conclusions are presented.

\setcounter{equation}{0}
\section{Preliminaries} \label{sec2}

\noindent
 In this section, we shall describe some basic notations and assumptions. Let $\Omega$ be a bounded domain in $\mathbb{R}^2$. For the integer $k \ge 0$ and $1 \le p \le \infty$, let $W^{k,p}(\Omega)$ be the Sobolev space with norm (see, e.g., \cite{R.A.Adams1975,S.C.Brenner2002}),
 \begin{equation*}
  \|u\|_{W^{k, p}}=\left\{\begin{array}{l}
   {\left(\sum\limits_{|\beta| \leq k} \int_{\Omega}\left|D^{\beta} u\right|^{p} \mathrm{d} x\right)^{\frac{1}{p}}, \quad {\rm for} ~ 1 \leq p<\infty}, \\
   {\sum\limits_{|\beta| \leq k} \operatorname{ess} \sup\limits _{\Omega}\left|D^{\beta} u\right|}, \quad\quad\quad~ {{\rm for}~ p=\infty},
  \end{array}\right.
 \end{equation*}
 where
 $$ D^\beta = \frac{\partial^{|\beta|}}{\partial x_1^{\beta_1}\partial x_2^{\beta_2}}, ~~~ |\beta|=\sum\limits_{i=1}^2\beta_i, $$
 for the multi-index $\beta=(\beta_1,\beta_2)$, $\beta_i \ge 0$, $1\le i \le 2$. For $p=2$, denote by $H^k(\Omega):=W^{k,2}(\Omega)$ and $ H_0^1(\Omega)=\{v|v\in H^1(\Omega):v|_{\partial \Omega}=0\}$, where $v|_{\partial \Omega}$ is in the sense of trace, $\|\cdot\|_{k,\Omega}:=\|\cdot\|_{k,2,\Omega}$ with the expression that $\|\cdot\|_0$ and $(\cdot,\cdot)$ denote the norm and inner product in $L^2$, respectively, and $\|\cdot\|_{0,\infty}:=\|\cdot\|_{L^\infty}$. Throughout this paper, we shall use $C$ denote a generic positive constant which may stand for different values at its different occurrences and are independent of the mesh parameters.

 Let $\mathcal{T}^h=\{\tau\}$ be a shape-regular simplices of $\Omega$ with mesh size $h = \max_{\tau\in\mathcal{T}^h } \{h_\tau\}$, where $h_\tau$ is the diameter of the elements $\tau$. Denote by $\partial \mathcal{T}^h$ the set of all surfaces  of simplices, $\partial^2 \mathcal{T}^h$ the set of all vertices of $\mathcal{T}^h$ and $\Lambda=\partial^{2}\mathcal{T}^{h}\backslash\partial\Omega$. We define the linear finite element space
 \begin{equation}\label{FEM-space}
  S^h = \{v\in H^1(\Omega ): v|_{\tau} \in \mathcal{P}^1(\tau),
  \forall \tau\in\mathcal{T}_h \},~~ S_0^h=S^h \cap H_0^1(\ome),
 \end{equation}
 where $\mathcal{P}^1 (\tau)$ is the space of linear polynomials on $\tau$. Let $\{\varphi_z:z\in\partial^2 \mathcal{T}^h\}\subset S^h$ be the standard nodal basis functions of $S^h$, namely,
 $$\varphi_{z_1}(z_2)=\delta_{z_1z_2}, ~~ \forall z_1,z_2 \in \partial^2 \mathcal{T}^h, $$
 where $\delta$ is the Kronecker symbol. For given $z\in\partial^2\mathcal{T}^h$, $l\in\partial \mathcal{T}^h$ and $\tau\in \mathcal{T}^h$,  denote by
 \begin{equation}\label{def-omega-zl-tau}
  \omega_{z}=\mathop\cup\limits_{z\in\bar{\tau}}\tau, ~~ \omega_{l}=\mathop\cup\limits_{l\subset\bar{\tau}}\tau,  ~~ \omega_{\tau}=\mathop\cup\limits_{\bar{\tau}'\cap\bar{\tau} \ne \emptyset}\tau',
 \end{equation}
 where $\bar{\tau}$ is the closure of $\tau$.

\subsection{Cl\'ement interpolation and local averaging operator}

\noindent
 We need introduce two Cl\'ement-type interpolation operators $\pi_{h}$ and $\Pi_{h}$: $L^2(\Omega)\rightarrow S^h_{0}$, which are defined respectively by (cf. \cite{P.Cle1975,N.Yan2001})
 \begin{align*}
  &\pi_{h}v=\sum_{z\in\Lambda}\upsilon_{z}\varphi_{z}, ~~~~~~~\upsilon_{z}=\frac {(\upsilon,\varphi_{z})}{(\varphi_{z},1)}, ~~\forall\upsilon\in L^2(\Omega), \\
  &\Pi_{h}\upsilon=\sum_{z\in\partial^2\mathcal{T}^h}\upsilon^z\varphi_{z},
  ~~\upsilon^z=\sum_{j=1}^{J_z}\alpha_z^j\upsilon|_{\tau_z^j}(z),
  ~~\forall\upsilon\in L^2(\Omega),
 \end{align*}
 where $\varphi_{z}$ is the basis function, $\cup_{j=1}^{J_z}\tau_z^j=\omega_{z}$, $\sum_{j=1}^{J_z}\alpha_z^j=1$, and $\alpha_{z}^{j}\ge 0$. For instance, $\alpha_z^j=\frac{1}{J_z}$ or $\alpha_{z}^{j}=\frac{|\tau_z^j|}{|\omega_z|} $. It should be pointed out that $\upsilon|_{\tau_{z}^j}$ is understood in the sense of trace in $\tau_z^j$ here. For $\upsilon\in H_0^1(\Omega)$, there hold (see e.g., \cite{N.Yan2001,R.Ver1998,C.Carstensen1999})\\
 \begin{align} \label{PCle-interpolation1}
 \|\upsilon-\pi_{h}\upsilon\|_{0,\tau}
 &\le Ch_{\tau}\|\nabla\upsilon\|_{0,\omega_\tau}, ~~~~\forall\tau\in \mathcal{T}^h, \\ \label{PCle-interpolation2}
 \|\upsilon-\pi_{h}\upsilon\|_{0,l}
 &\le Ch_{l}^{1/2}\|\nabla\upsilon\|_{0,\omega_l},~~\forall l\in\partial\mathcal{T}^h,
  \\ \label{PCle-interpolation3}
 |\pi_{h}\upsilon|_{1,\tau}
 &\le C|\upsilon|_{1,\omega_{\tau}}, ~~~~~~~~~~~\forall\tau\in \mathcal{T}^h.
 \end{align}

 The local averaging operator $G_{h}: S^h_0\rightarrow S^h\times S^h$ is defined as follows (cf. e.g., \cite{N.Yan2001,O.C.Zie1992})
 \begin{equation} \label{Gh-operator-alpha}
  G_h\upsilon =\Pi_h\big(\alpha(x,\upsilon)\nabla\upsilon\big), \quad
  (\alpha(x,\upsilon)\nabla\upsilon)_{z}=\sum_{j=1}^{J_{z}} \alpha_{z}^{j}(\alpha(z,\upsilon(z))\nabla\upsilon)_{\tau_{z}^{j}}, ~~ \forall\upsilon\in S^h_{0}.
 \end{equation}
 By the definition of the local averaging operator $G_h$, a smoothened flux field $G_hv$ is then obtained from the flux field  ``$\alpha(\cdot,\upsilon)\nabla v$". Hence, the operator $G_h$ is also called flux recovery operator (cf. \cite{D.Mao2006}). Note that if the coefficient $\alpha(\cdot,\upsilon)\equiv 1$, then a so-called gradient recovery operator $\widetilde{G}_{h} : S_{0}^{h} \mapsto S^{h} \times S^{h}$ is defined by (see e.g., \cite{N.Yan2001,O.C.Zie1992,Z.Zhang2001})
 \begin{align}\label{Ghwh0-0}
  \widetilde{G}_{h} v=\sum_{z \in \partial^2\mathcal{T}^h}(\nabla v)_{z} \varphi_{z}, \quad(\nabla v)_{z}=\sum_{j=1}^{J_{z}} \alpha_{z}^{j}(\nabla v)_{\tau_{z}^{j}}, ~~ \forall v \in S_{0}^{h}.
 \end{align}
In this paper, we shall use the local averaging operator $G_h$ to derive the a posteriori error estimators for the nonlinear PNP problem \eqref{MPNP-model}-\eqref{MPNP-model-bd0}.

 At the end of the subsection, according to the definition of the operator $\widetilde{G}_h$ and the properties of the basis function, we can easily get the following lemma which shall be used in our later analysis.
\begin{lemma} \label{lem-Ghwh}
  Suppose $\widetilde{G}_h$ is defined by \eqref{Ghwh0-0}. For any $w_h\in S_0^h$,  there hold
  \begin{align}\label{Ghwh-0}
   \|\widetilde{G}_hw_h\|_0 \le C\|\nabla w_h\|_0,
  \end{align}
 and
  \begin{align}\label{Ghwh-infty}
   \|\widetilde{G}_hw_h\|_{0,\infty} \le C\|\nabla w_h\|_{0,\infty}.
  \end{align}
\end{lemma}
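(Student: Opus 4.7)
The plan is to exploit two elementary structural facts about $\widetilde{G}_h$: first, for any piecewise-linear $w_h\in S_0^h$ the element gradients $(\nabla w_h)_{\tau}$ are constants on each $\tau$; second, the nodal values $(\nabla w_h)_z$ are convex combinations of these element gradients, since $\alpha_z^j\ge 0$ and $\sum_{j=1}^{J_z}\alpha_z^j=1$. Combined with the partition-of-unity property $\sum_{z\in\bar\tau}\varphi_z\equiv 1$ on $\tau$ together with $0\le\varphi_z\le 1$, both bounds \eqref{Ghwh-0} and \eqref{Ghwh-infty} should fall out quickly.

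For the $L^\infty$ estimate \eqref{Ghwh-infty}, I would fix $x\in\Omega$, let $\tau$ be the element containing $x$, and expand
\[
\widetilde{G}_h w_h(x)=\sum_{z\in\bar\tau}(\nabla w_h)_z\,\varphi_z(x).
\]
Taking absolute values, using $\varphi_z(x)\ge 0$ with $\sum_{z\in\bar\tau}\varphi_z(x)=1$, and then using $|(\nabla w_h)_z|\le\sum_j\alpha_z^j|(\nabla w_h)_{\tau_z^j}|\le\max_{\tau'\subset\omega_z}|(\nabla w_h)_{\tau'}|$ yields $|\widetilde{G}_h w_h(x)|\le\|\nabla w_h\|_{0,\infty}$, hence \eqref{Ghwh-infty}.

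For the $L^2$ estimate \eqref{Ghwh-0}, I would proceed element by element. On each $\tau$, the standard scaling $\|\varphi_z\|_{0,\tau}^2\le C|\tau|$ together with the finite number of vertices per element gives
\[
\|\widetilde{G}_h w_h\|_{0,\tau}^2\le C|\tau|\sum_{z\in\bar\tau}|(\nabla w_h)_z|^2.
\]
Then I use convexity in the form $|(\nabla w_h)_z|^2\le\sum_{j=1}^{J_z}\alpha_z^j\,|(\nabla w_h)_{\tau_z^j}|^2$ and rewrite each term as $|(\nabla w_h)_{\tau_z^j}|^2=|\tau_z^j|^{-1}\|\nabla w_h\|_{0,\tau_z^j}^2$. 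Shape regularity of $\mathcal{T}^h$ controls the ratio $|\tau|/|\tau_z^j|$ uniformly for $\tau_z^j\subset\omega_\tau$, producing
\[
\|\widetilde{G}_h w_h\|_{0,\tau}^2\le C\sum_{\tau'\subset\omega_\tau}\|\nabla w_h\|_{0,\tau'}^2.
\]
Summing over $\tau\in\mathcal{T}^h$ and invoking the finite-overlap property of the patches $\{\omega_\tau\}$ (again a consequence of shape regularity) gives \eqref{Ghwh-0}.

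The only real obstacle — essentially bookkeeping — is tracking the shape-regularity constants when converting the pointwise nodal values $(\nabla w_h)_z$ back to $L^2$ norms on surrounding elements, and controlling the patch overlaps in the final summation. Everything else is algebraic and follows from the convex-combination/partition-of-unity structure built into $\widetilde{G}_h$.
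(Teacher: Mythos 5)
Your proof is correct, and it is the standard argument: convexity of the nodal averaging weights plus the partition-of-unity/nonnegativity of the barycentric basis functions gives the $L^\infty$ bound pointwise, and the elementwise identity $|(\nabla w_h)_{\tau'}|^2=|\tau'|^{-1}\|\nabla w_h\|_{0,\tau'}^2$ together with shape regularity and finite patch overlap gives the $L^2$ bound. The paper itself states this lemma without proof (it is dismissed as following "easily" from the definition of $\widetilde{G}_h$ and the properties of the basis functions), so your write-up simply supplies the details the authors omitted; there is nothing to compare against and no gap to report.
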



\subsection{A finite element approximation} \label{sec3}

\noindent
 In this subsection, we consider the finite element approximation for the nonlinear PNP problem \eqref{MPNP-model}-\eqref{MPNP-model-bd0}.

 Suppose the problem \eqref{MPNP-model}-\eqref{MPNP-model-bd0} has a solution $(p^i,\phi)$, where $\phi\in H^1_0(\Omega)\cap W^{1,\infty}(\Omega) $ and $p^i\in H^1_0(\Omega)\cap W^{1,p}(\Omega)$ for some $p>2$. For any $w\in H^1_0(\Omega)\cap W^{1,p}(\Omega)$, the linearized operator $\mathcal{L}$ at $w$ (namely, the Fr\'echet derivative of $\mathcal{L}$ at $w$) is then given by
 \begin{align*}
  \mathcal{L}'(w,\phi)\psi=-\mbox{div}\big(\alpha(\cdot,w)\nabla\psi+\big(\alpha_y(\cdot,w)\nabla w+\beta_y(\cdot,w) \big)\psi\big)+g_y(\cdot,w)\psi -\mbox{div}\big(\gamma_y(\cdot,w)\nabla\phi\big)\psi.
 \end{align*}
 Furthermore, if we denote
 \begin{align*}
  H'(w)\psi=-\mbox{div}\big(\alpha(\cdot,w)\nabla\psi+\big(\alpha_y(\cdot,w)\nabla w+\beta_y(\cdot,w) \big)\psi\big)+g_y(\cdot,w)\psi,
 \end{align*}
 then, the bilinear form (induced by $H'(w)$) is that
 \begin{equation} \label{Hawpsiv}
 a'(w;\psi,v)=\big(\alpha(\cdot,w)\nabla\psi+
    \big(\alpha_y(\cdot,w)\nabla w+\beta_y(\cdot,w) \big)\psi,
    \nabla v\big)+\big(g_y(\cdot,w)\psi,v\big).
 \end{equation}

 Our basic assumptions are, first of all, the exact solution $p^i$ of \eqref{MPNP-model} satisfies
 \begin{equation}\label{elliptic-condition}
 \xi^\text{T}\alpha(x,p^i)\xi\geq C^{-1}|\xi|^2, ~~\forall\xi\in \mathbb{R}^2,~~x\in \bar{\Omega},
 \end{equation}
 for some constant $C>0$ and, secondly, $\mathcal{L}'(p^i,\phi):H^1_0(\Omega)\rightarrow H^{-1}(\Omega)$ is an isomorphism. As a result of these assumptions, $p^i$ must be an isolated solution (cf. \cite{XZ2001}).

 Denote by
 \begin{align} \label{aform1}
 a(w,v)=\big(\alpha(\cdot,w)\nabla w+\beta(\cdot,w),
              \nabla v\big)+\big( g(\cdot,w),v \big),
 \end{align}
 and
 \begin{align} \label{b-a2form}
  b(w,\psi,v)=\big(\gamma(\cdot,w)\nabla\psi,\nabla v\big), ~~~ \widetilde{a}(w, v)=\big( \epsilon(x)\nabla w,\nabla v\big).
 \end{align}
 Then the weak forms of \eqref{MPNP-model}-\eqref{MPNP-model-bd0} are that: find  $p^i, i=1,2,\cdots,n$ and $\phi \in H_0^1(\Omega)$ such that
 \begin{equation}\label{weak-p}
  a(p^i,v)+b(p^i,\phi,v)=0,  ~~ \forall v\in H^1_0(\Omega),  ~i=1,2,\cdots,n,
 \end{equation}
 \begin{equation}\label{weak-phi}
   \widetilde{a}(\phi,w) = \big(\widetilde{f}(p^i),w), ~~ \forall w\in  H^1_0(\Omega),
 \end{equation}
 where $\widetilde{f}(p^i):= \widetilde{f}(p^1,p^2,\cdots,p^n) = \sum\limits_{i=1}^nq^ip^i + f$.
 Similarly, from now on, we use $(p^i,\phi)$ to denote $(p^1,p^2$,$\cdots,p^n,\phi)$ for similicity.

 The corresponding finite element discretizations for \eqref{weak-p}-\eqref{weak-phi} are that: find $p_h^i, i=1, 2, \cdots, n$ and $\phi_h \in S_0^h$ such that
 \begin{equation}\label{fem-ph}
  a(p^i_h,v_h)+b(p^i_h,\phi_h,v_h)=0, ~~ \forall v_h\in S^h_0, ~i=1,2,\cdots,n,
 \end{equation}
 \begin{equation}\label{fem-phih}
  \widetilde{a}(\phi_h,w_h) = \big(\widetilde{f}(p_h^i),w_h), ~~~~~~~ \forall w_h\in S^h_0,
 \end{equation}
  where $\widetilde{f}(p_h^i) = \sum\limits_{i=1}^nq^ip_h^i + f$.

In the later analysis, we need the following identity. For any $w,\psi,\widetilde{w},\widetilde{\psi},v \in H^1_0(\ome)$, define the remainder
\begin{equation}\label{Remainder}
 R(w,\psi,\widetilde{w},\widetilde{\psi},v) = a(\widetilde{w},v)+b(\widetilde{w},\widetilde{\psi},v)-a(w,v)-b(w,\psi,v)-a'(w;\widetilde{w}-w,v).
\end{equation}
 By using the similar arguments in \cite{XZ2001}, we have the following lemma.
 \begin{lemma}\label{lem-Remainder}
 Let $(p^i,\phi)$ be the solution to \eqref{weak-p}-\eqref{weak-phi}. Then the finite element approximation $(p^i_h, \phi_h)$ is the  solution to \eqref{fem-ph} if and only if
 \begin{equation}\label{lem-R0}
  a'(p^i;p^i-p^i_h,v_h)=R(p^i,\phi,p^i_h,\phi_h,v_h), ~~\forall v_h\in S_0^h.
 \end{equation}
 Moreover, for any $w,\psi,\widetilde{w},\widetilde{\psi},v \in H^1_0(\Omega)$, if
 $\psi\in W^{1,\infty}(\Omega)$ and $\gamma(\cdot,\widetilde{w}) \in L^{\infty}(\Omega)$ satisfies
 \begin{equation} \label{gamma-chi-w}
 \|\gamma(\cdot,\widetilde{w})-\gamma(\cdot,w)\|_0 \le C\|\widetilde{w}-w\|_0,
 \end{equation}
 then the remainder $R$ satisfies
 \begin{align} \label{Rerror2}
  \big|R(w,\psi,\widetilde{w},\widetilde{\psi},v)\big|
  &\le C\big(\|w-\widetilde{w}\|_{1,3}\|w-\widetilde{w}\|_{1}  +\|\nabla\widetilde{\psi}-\nabla\psi\|_0
         +\|\widetilde{w}-w\|_{0}\big)\|v\|_{1}.
 \end{align}
\end{lemma}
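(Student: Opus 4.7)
The argument has two parts, matching the two assertions.

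For the characterization \eqref{lem-R0}, the reasoning is purely algebraic. Specializing \eqref{Remainder} to $(w,\psi,\widetilde{w},\widetilde{\psi}) = (p^i,\phi,p^i_h,\phi_h)$, I rearrange to
\[
 R(p^i,\phi,p^i_h,\phi_h,v_h) + a'(p^i;p^i_h - p^i, v_h)
  = \bigl[a(p^i_h,v_h) + b(p^i_h,\phi_h,v_h)\bigr] - \bigl[a(p^i,v_h) + b(p^i,\phi,v_h)\bigr].
\]
The second bracket vanishes by \eqref{weak-p} applied to the admissible test $v_h \in S_0^h \subset H_0^1(\Omega)$; using $a'(p^i;p^i_h - p^i,v_h) = -a'(p^i; p^i - p^i_h, v_h)$, the identity \eqref{lem-R0} is equivalent to the first bracket vanishing, which is precisely \eqref{fem-ph}. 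This yields both implications at once.

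For the bound \eqref{Rerror2}, I decompose $R(w,\psi,\widetilde w,\widetilde\psi,v)$ into four pieces corresponding to the nonlinear constituents $\alpha(\cdot,w)\nabla w$, $\beta(\cdot,w)$, $g(\cdot,w)$, and $\gamma(\cdot,w)\nabla\psi$. The $\beta$- and $g$-pieces are ordinary second-order Taylor remainders, $\beta(\cdot,\widetilde w) - \beta(\cdot,w) - \beta_y(\cdot,w)(\widetilde w - w) = O(|\widetilde w - w|^2)$ and analogously for $g$, so by Cauchy--Schwarz and the 2D embedding $W^{1,3}(\Omega) \hookrightarrow L^\infty(\Omega)$ they contribute at most $C\|\widetilde w - w\|_{1,3}\|\widetilde w - w\|_0 \|v\|_1$. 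The $b$-piece is handled by the bilinear splitting
\[
 b(\widetilde w,\widetilde\psi,v) - b(w,\psi,v) = \bigl(\gamma(\cdot,\widetilde w)\nabla(\widetilde\psi - \psi),\nabla v\bigr) + \bigl((\gamma(\cdot,\widetilde w) - \gamma(\cdot,w))\nabla\psi, \nabla v\bigr),
\]
whose first term is controlled by $\|\gamma(\cdot,\widetilde w)\|_{0,\infty}\|\nabla\widetilde\psi - \nabla\psi\|_0\|v\|_1$ and whose second, using hypothesis \eqref{gamma-chi-w} and $\psi \in W^{1,\infty}$, is controlled by $C\|\widetilde w - w\|_0\|v\|_1$.

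The chief obstacle is the $\alpha$-piece, because $\alpha(\cdot,w)\nabla w$ depends on $w$ both through $\alpha$ and through $\nabla w$, so a naive Taylor expansion does not directly apply. The key algebraic identity I will use is
\[
\begin{aligned}
 &\alpha(\cdot,\widetilde w)\nabla\widetilde w - \alpha(\cdot,w)\nabla w - \alpha(\cdot,w)\nabla(\widetilde w - w) - \alpha_y(\cdot,w)(\widetilde w - w)\nabla w \\
 &\qquad= \bigl(\alpha(\cdot,\widetilde w) - \alpha(\cdot,w)\bigr)\nabla(\widetilde w - w) + \bigl(\alpha(\cdot,\widetilde w) - \alpha(\cdot,w) - \alpha_y(\cdot,w)(\widetilde w - w)\bigr)\nabla w.
\end{aligned}
\]
The first summand is pointwise $O(|\widetilde w - w||\nabla(\widetilde w - w)|)$, and H\"older with exponents $(6,3,2)$ followed by the 2D Sobolev embedding $H^1 \hookrightarrow L^6$ gives precisely $C\|\widetilde w - w\|_1 \|\widetilde w - w\|_{1,3}\|v\|_1$. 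The second summand is $O(|\widetilde w - w|^2|\nabla w|)$; absorbing $\|\nabla w\|_{0,\infty}$ into the generic constant (legitimate when the lemma is applied with $w = p^i \in W^{1,\infty}$) and applying $W^{1,3} \hookrightarrow L^\infty$ yields $C\|\widetilde w - w\|_{1,3}\|\widetilde w - w\|_0 \|v\|_1$. Summing the four contributions produces \eqref{Rerror2}. The main subtlety is the asymmetric norm bookkeeping: one must pick exactly the $(6,3,2)$ H\"older triple so that the mixed product $\|\cdot\|_{1,3}\|\cdot\|_1$ emerges, rather than an isotropic $\|\cdot\|_{1,3}^2$, so that the bound matches the regularity actually available in the a posteriori analysis that follows.
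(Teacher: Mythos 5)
Your proof of the equivalence \eqref{lem-R0} and your treatment of the $b$-difference are exactly the paper's: the same substitution into \eqref{Remainder} combined with \eqref{weak-p} and \eqref{fem-ph}, and the same two-term splitting of $b(\widetilde w,\widetilde\psi,v)-b(w,\psi,v)$ estimated via $\gamma(\cdot,\widetilde w)\in L^\infty$, $\psi\in W^{1,\infty}$ and \eqref{gamma-chi-w}. Where you genuinely diverge is the Taylor remainder $a(\widetilde w,v)-a(w,v)-a'(w;\widetilde w-w,v)$: the paper writes it as $\tfrac12\int_0^1\eta''(t)(1-t)\,dt$ with $\eta(t)=a(w+t(\widetilde w-w),v)$ and then invokes Lemma~3.1 of \cite{XZ2001} as a black box, whereas you prove the bound from scratch by splitting into the $\alpha$-, $\beta$- and $g$-constituents, using an explicit algebraic identity for the $\alpha$-term and the $(6,3,2)$ H\"older triple with $H^1\hookrightarrow L^6$ and $W^{1,3}\hookrightarrow L^\infty$ in 2D. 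Your route is self-contained and makes the asymmetric norm bookkeeping transparent; the paper's is shorter but opaque about where $\|\cdot\|_{1,3}\|\cdot\|_1$ comes from. The one place where your version is slightly less general: for the second-order piece multiplying $\nabla w$ you absorb $\|\nabla w\|_{0,\infty}$ into the constant, but the lemma is later applied with $w=p_h^i$ (in the proof of Theorem~\ref{theo-upper-p}), whose $W^{1,\infty}$ norm is not uniformly controlled --- only $\|\nabla w\|_{0,p}$ with some $p>2$ is, which is exactly the factor appearing in the cited bound \eqref{rwr}. This is easily repaired without changing your structure: estimate $\int|\widetilde w-w|^2|\nabla w||\nabla v|\le \|\widetilde w-w\|_{0,\infty}\,\|\widetilde w-w\|_{0,q}\,\|\nabla w\|_{0,p}\,\|\nabla v\|_{0}$ with $1/q=1/2-1/p$, and use $H^1\hookrightarrow L^q$; this recovers the factor $\|\widetilde w-w\|_{1,3}\|\widetilde w-w\|_{1}$ under the paper's actual hypotheses.
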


\begin{proof} Taking $w=p^i$, $\psi=\phi,\widetilde{w}=p_h^i$, $\widetilde{\psi}=\phi_h$ and $v=v_h$ in (\ref{Remainder}) and from (\ref{weak-p}) and (\ref{fem-ph}), it yields (\ref{lem-R0}).

 Now we turn to show (\ref{Rerror2}). Let $\eta(t)=a(w+t(\widetilde{w}-w),v)$. Since
 \begin{equation*}
  \eta(1)=\eta(0)+\eta '(0)+\frac{1}{2}\int_0^1\eta''(t)(1-t)dt,
 \end{equation*}
 we have
 \begin{equation*}
	a(\widetilde{w} ,v)= a(w,v)+a'(w;\widetilde{w} -w ,v)
                         + \widetilde{R}(w,\widetilde{w} ,v),
 \end{equation*}
 where $\widetilde{R}(w,\widetilde{w},v) = \frac{1}{2}\int_0^1\eta''(t)(1-t)dt$. Compared with \eqref{Remainder}, it apparently shows that
 \begin{align} \nonumber
  |R(w,\psi,\widetilde{w},\widetilde{\psi},v)|
  &= |b(\widetilde{w},\widetilde{\psi},v)-b(w,\psi,v)+\widetilde{R}(w,\widetilde{w},v)| \\
  & \le |b(\widetilde{w},\widetilde{\psi},v)-b(w,\psi,v)|
       + |\widetilde{R}(w,\widetilde{w},v)|.
  \label{RB}
 \end{align}
 For the first term on the right-hand side in \eqref{RB}, by $\psi\in W^{1,\infty}(\Omega)$, $\gamma(\cdot,\widetilde{w})\in L^{\infty}(\Omega)$ and \eqref{gamma-chi-w}, we get
 \begin{align}\label{BB}
 \big|b(\widetilde{w},\widetilde{\psi},v)&-b(w,\psi,v)\big|
 =\big|\big(\gamma(\cdot,\widetilde{w})\nabla\widetilde{\psi}
            -\gamma(\cdot,w)\nabla\psi,\nabla v\big)\big| \notag \\
 &= \big|\big(\gamma(\cdot,\widetilde{w})(\nabla\widetilde{\psi}-\nabla\psi),\nabla v\big)
   +\big((\gamma(\cdot,\widetilde{w})-\gamma(\cdot,w))\nabla\psi,\nabla v\big)\big|
   \notag \\
 &\le C\big(\|\gamma(\cdot,\widetilde{w})\|_{0,\infty}\|\nabla\widetilde{\psi}-\nabla\psi\|_0
   +\|\nabla\psi\|_{0,\infty}\|\gamma(\cdot,\widetilde{w})
   -\gamma(\cdot,w)\|_{0} \big)\|v\|_1 \notag \\
 &\le C\big(\|\nabla\widetilde{\psi}-\nabla\psi\|_0
      +\|\widetilde{w}-w\|_{0} \big)\|v\|_1.
\end{align}

 On the other hand, from Lemma 3.1 in \cite{XZ2001}, the second term on the right-hand side in \eqref{RB} can be bounded by
 \begin{align}\label{rwr}
  |\widetilde{R}(w,\widetilde{w},v)|
  &\le C_\lambda \big(\|w-\widetilde{w}\|_{1,3}\|w-\widetilde{w}\|_{1}+(\|\nabla w\|_{0,p}
       +\|\nabla\widetilde{w}\|_{0,p})\|w-\widetilde{w}\|_{1,3}\|w-\widetilde{w}\|_{1} \big) \|v\|_1 \notag \\
  &\le C \|w-\widetilde{w}\|_{1,3}\|w-\widetilde{w}\|_{1}\|v\|_1,
 \end{align}
 where $C_\lambda=\max\{ |\alpha_y|,|\alpha_{yy}|,|\beta_{yy}|,|g_{yy}| \}$ on $ \bar{\Omega} \times [-\lambda,\lambda]$.

 Inserting \eqref{BB} and \eqref{rwr} into \eqref{RB}, then we complete the proof.  $\hfill\Box$
\end{proof}

\begin{lemma}\label{lem-phih-infty-new1}
 Let $(p^i, \phi)$ and $(p_h^i, \phi_h)$ be the solutions of \eqref{weak-p}-\eqref{weak-phi} and \eqref{fem-ph}-\eqref{fem-phih}, respectively. If
 $f\in H^1(\Omega)$, then we have
 \begin{equation}\label{phih-infty-new0}
 \|\nabla \phi_h\|_{0,\infty} \le C.
 \end{equation}
\end{lemma}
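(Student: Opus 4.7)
The plan is to compare $\phi_h$ with the Lagrange (or Cl\'ement) interpolant $I_h\phi\in S_0^h$ of the continuous solution $\phi$ and to bridge the $H^1$-norm and the $W^{1,\infty}$-norm by the standard inverse inequality on the quasi-uniform mesh. Writing
$$\|\nabla\phi_h\|_{0,\infty}\le\|\nabla(\phi_h-I_h\phi)\|_{0,\infty}+\|\nabla I_h\phi\|_{0,\infty},$$
the second term is handled by the $W^{1,\infty}$-stability of the Lagrange interpolation on shape-regular meshes together with the standing assumption $\phi\in W^{1,\infty}(\Omega)$, giving $\|\nabla I_h\phi\|_{0,\infty}\le C\|\nabla\phi\|_{0,\infty}\le C$. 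Note that $\phi\in W^{1,\infty}(\Omega)\subset C(\bar\Omega)$ in two dimensions, so the nodal interpolation is well defined.

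Since $\phi_h-I_h\phi\in S_0^h$ and its gradient is piecewise constant, the inverse inequality on the quasi-uniform mesh yields
$$\|\nabla(\phi_h-I_h\phi)\|_{0,\infty}\le Ch^{-1}\|\nabla(\phi_h-I_h\phi)\|_0\le Ch^{-1}\bigl(\|\nabla(\phi-\phi_h)\|_0+\|\nabla(\phi-I_h\phi)\|_0\bigr).$$
Under the $H^2$-regularity of $\phi$, which follows from $\widetilde f(p^i)=\sum_{i=1}^{n}q^ip^i+f\in L^2(\Omega)$ (since $f\in H^1(\Omega)$ and $p^i\in H^1(\Omega)$), the smoothness of $\epsilon$, and the elliptic regularity of $-\nabla\cdot(\epsilon\nabla\cdot)$ on $\Omega$, the interpolation estimate $\|\nabla(\phi-I_h\phi)\|_0\le Ch|\phi|_2$ is available. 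For the Galerkin error $\|\nabla(\phi-\phi_h)\|_0$, introduce the auxiliary function $\widetilde\phi_h\in S_0^h$ defined by $\widetilde a(\widetilde\phi_h,w_h)=(\widetilde f(p^i),w_h)$ for all $w_h\in S_0^h$, for which Cea's lemma delivers $\|\nabla(\phi-\widetilde\phi_h)\|_0\le Ch|\phi|_2$. Since $\widetilde\phi_h-\phi_h\in S_0^h$ satisfies
$$\widetilde a(\widetilde\phi_h-\phi_h,w_h)=\sum_{i=1}^n q^i(p^i-p_h^i,w_h), \quad\forall\, w_h\in S_0^h,$$
testing with $w_h=\widetilde\phi_h-\phi_h$ and invoking coercivity of $\widetilde a$ gives $\|\nabla(\widetilde\phi_h-\phi_h)\|_0\le C\sum_i\|p^i-p_h^i\|_0$. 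Combining, $\|\nabla(\phi-\phi_h)\|_0\le Ch+C\sum_i\|p^i-p_h^i\|_0\le Ch$, using the a priori $L^2$-convergence $\|p^i-p_h^i\|_0\le Ch$ known from finite element analysis of the nonlinear PNP system (cf.\ \cite{yanglu2013}). Inserting these bounds gives $\|\nabla(\phi_h-I_h\phi)\|_{0,\infty}\le C$, whence the conclusion.

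The principal obstacle is the a priori $L^2$-rate $\|p^i-p_h^i\|_0\le Ch$, which is exactly what is needed to counteract the $h^{-1}$ factor from the inverse inequality and produce a bound independent of $h$. A secondary technical point is the $H^2$-regularity of $\phi$ on the polygonal domain $\Omega$, which is not automatic from $L^2$ data alone on a non-convex polygon and must either be incorporated into the problem setup (e.g., by restricting to convex polygonal $\Omega$ or to configurations with sufficiently regular corner angles) or extracted from the standing hypothesis $\phi\in W^{1,\infty}(\Omega)$ through interior regularity together with careful handling of corner singularities.
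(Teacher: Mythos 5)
Your argument is workable but takes a genuinely different route from the paper's. The paper observes that, by \eqref{fem-phih}, $\phi_h$ is the finite element (Ritz) approximation of the solution of the auxiliary linear problem $-\nabla\cdot(\epsilon\nabla\widetilde\phi)=\sum_i q^ip_h^i+f$, and then chains together the $W^{1,\infty}$-stability of that projection, the two-dimensional embedding $W^{2,4}(\Omega)\hookrightarrow W^{1,\infty}(\Omega)$, $L^4$ elliptic regularity, and $H^1\hookrightarrow L^4$; the only input on the discrete concentrations is a uniform $H^1$ bound on $p_h^i$, and no convergence rate is needed. You instead compare $\phi_h$ with an interpolant of $\phi$ and trade a global inverse inequality against the optimal $H^1$ error bound $\|\nabla(\phi-\phi_h)\|_0\le Ch$. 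This is more elementary (it avoids the rather deep max-norm stability of the Ritz projection that the paper's first inequality silently invokes), but it costs you two hypotheses that the lemma does not state: quasi-uniformity of the mesh (the inverse inequality is local, $h_\tau^{-1}\|\cdot\|_{0,\tau}$, so a global $O(h_{\max})$ bound in $L^2$ does not control the smallest elements unless $h_{\min}\sim h_{\max}$ --- a genuine concern in this paper, whose entire purpose is adaptive, hence non-quasi-uniform, refinement), and the a priori rate $\|p^i-p_h^i\|_0\le Ch$, which is only imposed later in the paper (Lemma \ref{lem-key}, Theorem \ref{theo-upper-p}) and is not available from \cite{yanglu2013} as an $L^2$ duality estimate, though it does follow from the $O(h)$ bound in $H^1$ proved there. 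You correctly flag the $H^2$-regularity issue on a non-convex polygon; the paper's own proof has the same exposure through its use of $W^{2,4}$ regularity. With quasi-uniformity and $\|p^i-p_h^i\|_0\le Ch$ added to the hypotheses your argument closes; against the hypotheses as stated it does not quite.
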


\begin{proof}
 From \eqref{fem-phih}, we know that $\phi_h$ is the finite element approximation to the solution of the following problem
 \begin{equation}\label{phih-infty-new1}
 -\nabla\cdot \big( \epsilon(x)\nabla\phi \big) = \sum\limits_{i=1}^nq^ip_h^i + f.
 \end{equation}
 Hence, by Gagliardo--Nirenberg--Sobolev inequality (see \cite{L.C.Evans1998}) and the regularity estimate (see \cite{Y.Chen1998}), we have
 \begin{align}  \nonumber
  \|\nabla\phi_h\|_{0,\infty}
  &\le C\|\phi\|_{1,\infty}\le C\|\phi\|_{2,4} \le C
       \|\sum\limits_{i=1}^n q^ip_h^i + f\|_{0,4} \\  \notag
  &\le C \big(\|\sum\limits_{i=1}^n \|p_h^i\|_{0,4} + \|f\|_{0,4}\big) \\  \notag
  &\le C \big(\|\sum\limits_{i=1}^n \|p_h^i\|_{1,2} + \|f\|_{1,2}\big) \\  \notag
  &\le C, \label{phi-infty-pro}
 \end{align}
 where we have used the assumption that $p_h^i$ and $f\in H^1(\ome)$. Thus, we finish the proof of Lemma \ref{lem-phih-infty-new1}.  $\hfill\Box$
\end{proof}

 In \cite{yanglu2013}, the a priori error estimate is shown for the potential as follows.
\begin{lemma} \label{the-phi-femH1}
 \cite{yanglu2013} Let $(p^i, \phi)$ and $(p_h^i, \phi_h)$ be the solutions of \eqref{weak-p}-\eqref{weak-phi} and \eqref{fem-ph}-\eqref{fem-phih}, respectively.  If $\phi\in H^2(\ome)$, then there holds
 \begin{equation} \label{phi-H1}
  \|\phi-\phi_h\|_1 \le C (h+\sum_{i=1}^n \|p^i-p^i_h\|_0).
 \end{equation}
\end{lemma}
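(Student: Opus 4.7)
The plan is to use the fact that for fixed right-hand side the Poisson equation defining $\phi$ is linear, so a Strang/C\'ea-type argument applies and all nonlinear coupling is pushed into the data perturbation $\widetilde f(p^i)-\widetilde f(p^i_h)$. First I would subtract \eqref{fem-phih} from \eqref{weak-phi} restricted to test functions $w_h\in S_0^h$ to obtain the perturbed Galerkin relation
\begin{equation*}
 \widetilde a(\phi-\phi_h, w_h) = \big(\widetilde f(p^i)-\widetilde f(p^i_h), w_h\big) = \sum_{i=1}^n q^i\big(p^i-p^i_h, w_h\big),\qquad\forall w_h\in S_0^h.
\end{equation*}

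Next I would split the error as $\phi-\phi_h=(\phi-I_h\phi)+(I_h\phi-\phi_h)$, where $I_h\phi\in S_0^h$ is a standard Lagrange or Cl\'ement interpolant, and set $e_h:=I_h\phi-\phi_h\in S_0^h$. The dielectric coefficient $\epsilon(x)$ is smooth and bounded below, so $\widetilde a$ is coercive and continuous on $H^1_0(\Omega)$; hence
\begin{equation*}
 C^{-1}\|e_h\|_1^2 \le \widetilde a(e_h,e_h) = \widetilde a(I_h\phi-\phi, e_h) + \widetilde a(\phi-\phi_h, e_h).
\end{equation*}
The first term on the right is controlled by continuity of $\widetilde a$ and the interpolation estimate $\|\phi-I_h\phi\|_1\le Ch\|\phi\|_2$, using $\phi\in H^2(\Omega)$. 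The second term is handled by the Galerkin relation above together with the Cauchy--Schwarz inequality and the Poincar\'e inequality $\|e_h\|_0\le C\|e_h\|_1$, giving
\begin{equation*}
 \Big|\sum_{i=1}^n q^i(p^i-p^i_h,e_h)\Big|\le C\sum_{i=1}^n\|p^i-p^i_h\|_0\,\|e_h\|_1.
\end{equation*}

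Combining these bounds, dividing through by $\|e_h\|_1$, and applying the triangle inequality yields
\begin{equation*}
 \|\phi-\phi_h\|_1 \le \|\phi-I_h\phi\|_1 + \|e_h\|_1 \le C\Big(h + \sum_{i=1}^n\|p^i-p^i_h\|_0\Big),
\end{equation*}
which is exactly \eqref{phi-H1}. I do not expect a real obstacle here: the Poisson equation is linear in $\phi$ once the concentrations are treated as data, so no fixed-point or linearization machinery (like Lemma \ref{lem-Remainder}) is needed, and the only ingredients are coercivity of $\widetilde a$, standard interpolation error, and the regularity assumption $\phi\in H^2(\Omega)$. The only mildly delicate point is justifying continuity of $\widetilde a$ and the $L^2$ bound on the data perturbation uniformly in $h$, which both follow from the smoothness of $\epsilon$ and the fact that the charges $q^i$ are constants.
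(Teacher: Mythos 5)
Your proposal is correct: the paper states this lemma without proof, simply citing \cite{yanglu2013}, and your perturbed-Galerkin/C\'ea-type argument (subtracting the two variational problems, splitting off the interpolant, and using coercivity of $\widetilde a$ together with the $L^2$ bound on the data perturbation $\widetilde f(p^i)-\widetilde f(p^i_h)$) is the standard and complete route to \eqref{phi-H1}. The only hypotheses you invoke beyond the statement itself, namely the uniform lower bound $\epsilon(x)\ge\epsilon_0>0$ and the constancy of the charges $q^i$, are exactly the assumptions the paper makes elsewhere (cf.\ \eqref{bouepi}), so nothing is missing.
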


 In the later analysis, we also need the following lemmas.
\begin{lemma}\label{lemm3.2} \cite{XZ2001,Xu1996}
  If $h \ll 1$ and $p^i$ is the solution of \eqref{weak-p}-\eqref{weak-phi}, then
  \begin{equation}\label{yin1}
    {\left\| w_h \right\|_{1}} \le C\mathop {\sup }\limits_{\varphi\in S_0^h} \frac{{a'({p^i},w_h,\varphi )}}{{{{\left\| \varphi  \right\|}_{1 }}}},\quad \forall w_h \in S_0^h.
 \end{equation}
\end{lemma}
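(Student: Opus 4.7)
The plan is to transfer the continuous inf-sup condition implied by the hypothesis that $\mathcal{L}'(p^i,\phi)$ is an isomorphism from $H_0^1(\Omega)$ onto $H^{-1}(\Omega)$ down to the discrete space $S_0^h$, via a Schatz-type perturbation argument. First I would derive an $H^1$ lower bound from a G{\aa}rding decomposition, then absorb the ensuing $L^2$ correction by an Aubin--Nitsche duality step, which produces a factor of $h$ that can be hidden in the left-hand side once $h\ll 1$.

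For the G{\aa}rding step, decompose $a'(p^i,w_h,\varphi)=a_0(w_h,\varphi)+K(w_h,\varphi)$, where $a_0(w_h,\varphi)=(\alpha(\cdot,p^i)\nabla w_h,\nabla\varphi)$ is coercive on $H_0^1(\Omega)$ by \eqref{elliptic-condition}, and $K(w_h,\varphi)=((\alpha_y(\cdot,p^i)\nabla p^i+\beta_y(\cdot,p^i))w_h,\nabla\varphi)+(g_y(\cdot,p^i)w_h,\varphi)$ is a lower-order term satisfying $|K(w_h,\varphi)|\le C\|w_h\|_0\|\varphi\|_1$ (using $p^i\in W^{1,p}$ for some $p>2$ and boundedness of the derivatives of $\alpha,\beta,g$ on a neighbourhood of the range of $p^i$). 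Taking $\varphi=w_h$, bounding $a_0(w_h,w_h)$ by $a'(p^i,w_h,w_h)-K(w_h,w_h)$, and applying Young's inequality to the $K$ term yields
\[
\|w_h\|_1 \le C\sup_{\varphi\in S_0^h}\frac{a'(p^i,w_h,\varphi)}{\|\varphi\|_1} + C\|w_h\|_0.
\]

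For the duality step, let $z\in H_0^1(\Omega)$ solve the adjoint problem $a'(p^i,v,z)=(w_h,v)$ for all $v\in H_0^1(\Omega)$; the isomorphism hypothesis together with $H^2$-regularity of this elliptic adjoint gives $\|z\|_2\le C\|w_h\|_0$. Let $z_h\in S_0^h$ be a Cl\'ement interpolant of $z$, for which $\|z-z_h\|_1\le Ch\|z\|_2$. Then
\[
\|w_h\|_0^2 = a'(p^i,w_h,z) = a'(p^i,w_h,z_h)+a'(p^i,w_h,z-z_h),
\]
and controlling the first summand by $\sup\cdot\|z_h\|_1\le C\sup\cdot\|w_h\|_0$ and the second by $C\|w_h\|_1\|z-z_h\|_1\le Ch\|w_h\|_1\|w_h\|_0$ gives $\|w_h\|_0\le C\sup+Ch\|w_h\|_1$. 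Substituting this into the G{\aa}rding estimate produces $\|w_h\|_1\le C\sup+Ch\|w_h\|_1$, so for $h\ll 1$ the last term is absorbed into the left side and \eqref{yin1} follows.

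The most delicate point I anticipate is the duality argument: it relies on $H^2$-regularity (or at least an $H^{1+s}$ pickup with $s>0$) of the adjoint problem, which requires sufficient smoothness of the coefficients of $a'(p^i;\cdot,\cdot)$ together with enough regularity of the polygonal domain $\Omega$. On a non-convex polygon one only obtains $\|z\|_{1+s}\le C\|w_h\|_0$ for some $s\in(0,1]$; the argument still closes with the duality factor $h$ replaced by $h^s$, at the price of a stronger smallness assumption on $h$, which is consistent with the hypothesis ``$h\ll 1$'' in the statement.
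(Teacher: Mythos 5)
Your proposal is correct and follows essentially the same route as the paper, which offers no proof of its own but simply cites \cite{XZ2001,Xu1996}: the standard Schatz-type perturbation argument (G{\aa}rding inequality for the coercive principal part plus an Aubin--Nitsche duality step to absorb the $L^2$ remainder for $h\ll 1$) is exactly the mechanism behind the discrete inf-sup condition in those references. One small imprecision: since $\nabla p^i$ is only assumed to lie in $L^p(\Omega)$ for some $p>2$, the bound $|K(w_h,\varphi)|\le C\|w_h\|_0\|\varphi\|_1$ is too strong as stated --- H\"older gives $\|w_h\|_{0,2p/(p-2)}$ in place of $\|w_h\|_0$ --- but the argument still closes after interpolating $\|w_h\|_{0,2p/(p-2)}\le C\|w_h\|_0^{\theta}\|w_h\|_1^{1-\theta}$ with $\theta=(p-2)/p>0$ and applying Young's inequality, so this does not affect the conclusion.
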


\begin{lemma}\label{lem-p-Rhp} \cite{XZ2001}
  Let $R_h:H^1_0(\Omega)\rightarrow S^h_0$ be defined by
  \begin{equation} \label{p-Rhp-0}
	a'(p^i;p^i-R_hp^i,v_h)=0, ~~ \forall v_h\in S^h_0.
  \end{equation}
  If $p^i\in H_0^1(\Omega)\cap H^{2}(\Omega)$, then
  \begin{equation}\label{yin30}
   \|p^i - R_hp^i\|_{1,t}\le Ch^{1+2(1/t-1/2)}\|p^i\|_{2}, ~~ t\ge 2,
  \end{equation}
 and
 \begin{equation}\label{yin301}
  \|R_hp^i\|_{1,p}\leq C\|p^i\|_{2}.
 \end{equation}	
\end{lemma}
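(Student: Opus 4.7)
The plan is to establish both estimates via the discrete inf-sup condition from Lemma~\ref{lemm3.2} together with standard polynomial interpolation and inverse inequalities on the quasi-uniform mesh $\mathcal{T}^h$. First, existence and uniqueness of $R_h p^i$ follow immediately from \eqref{yin1}, since the inf-sup bound makes the linear operator $w_h\mapsto a'(p^i;w_h,\cdot)$ injective, hence bijective, on the finite-dimensional space $S_0^h$.

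For \eqref{yin30} I would insert the Lagrange interpolant $I_h p^i\in S_0^h$ and split
$$\|p^i-R_hp^i\|_{1,t} \le \|p^i-I_hp^i\|_{1,t} + \|I_hp^i-R_hp^i\|_{1,t}.$$
The first term is controlled by the scaled interpolation estimate $\|p^i-I_hp^i\|_{1,t}\le Ch^{1+2(1/t-1/2)}\|p^i\|_2$, which holds for $2\le t<\infty$ in two space dimensions via the Sobolev embedding $H^2(\Omega)\hookrightarrow W^{1,t}(\Omega)$. For the second, discrete term, I would first apply Lemma~\ref{lemm3.2} with $w_h=R_hp^i-I_hp^i$: combined with the defining orthogonality $a'(p^i;p^i-R_hp^i,\varphi)=0$ for all $\varphi\in S_0^h$ and the (mesh-independent) continuity of $a'(p^i;\cdot,\cdot)$ on $H_0^1(\Omega)\times H_0^1(\Omega)$, this yields
$$\|R_hp^i-I_hp^i\|_1 \le C\sup_{\varphi\in S_0^h}\frac{a'(p^i;p^i-I_hp^i,\varphi)}{\|\varphi\|_1} \le C\|p^i-I_hp^i\|_1 \le Ch\|p^i\|_2.$$
A standard inverse inequality $\|w_h\|_{1,t}\le Ch^{2/t-1}\|w_h\|_1$ for $w_h\in S_0^h$ then lifts this to $\|I_hp^i-R_hp^i\|_{1,t}\le Ch^{2/t}\|p^i\|_2$. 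Using $h^{1+2(1/t-1/2)}=h^{2/t}$ and collecting the two contributions gives \eqref{yin30}.

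The bound \eqref{yin301} follows from the triangle inequality $\|R_hp^i\|_{1,p}\le\|p^i\|_{1,p}+\|p^i-R_hp^i\|_{1,p}$, together with Sobolev embedding $H^2(\Omega)\hookrightarrow W^{1,p}(\Omega)$ to control the first summand and \eqref{yin30} with $t=p$ (so that $h^{2/p}$ stays bounded as $h\to 0$) for the second.

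The main technical hurdle I anticipate is verifying the mesh-independent continuity of $a'(p^i;\cdot,\cdot)$ used in the Céa-type estimate. From \eqref{Hawpsiv} this requires uniform bounds on $\alpha(\cdot,p^i)$, $\alpha_y(\cdot,p^i)\nabla p^i$, $\beta_y(\cdot,p^i)$ and $g_y(\cdot,p^i)$ in suitable norms; these follow from smoothness of the nonlinearities together with the standing regularity assumption $p^i\in H_0^1(\Omega)\cap W^{1,p}(\Omega)$ for some $p>2$, via Hölder's inequality applied to the terms in the definition of $a'$.
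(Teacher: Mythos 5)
The paper does not prove this lemma at all --- it is quoted verbatim from \cite{XZ2001} --- so there is no in-paper argument to compare against; your proof has to be judged against the standard one in that reference, and it matches it. The structure is exactly right: well-posedness of $R_h$ and the C\'ea-type bound $\|R_hp^i-I_hp^i\|_{1}\le C\|p^i-I_hp^i\|_{1}\le Ch\|p^i\|_{2}$ from the discrete inf-sup condition \eqref{yin1} plus the Galerkin orthogonality \eqref{p-Rhp-0} and the $H^1\times H^1$ continuity of $a'(p^i;\cdot,\cdot)$; the lift to $W^{1,t}$ via the inverse estimate $\|w_h\|_{1,t}\le Ch^{2(1/t-1/2)}\|w_h\|_{1}$ together with the interpolation bound $\|p^i-I_hp^i\|_{1,t}\le Ch^{2/t}\|p^i\|_{2}$ (the exponent bookkeeping $1+2(1/t-1/2)=2/t$ is correct, and summing the elementwise bounds uses $\ell^2\hookrightarrow\ell^t$ for $t\ge 2$); and \eqref{yin301} by the triangle inequality and $H^2(\Omega)\hookrightarrow W^{1,p}(\Omega)$ in two dimensions. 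Two minor points worth recording: the global inverse inequality you invoke requires quasi-uniformity of $\mathcal{T}^h$ (the setting of \cite{XZ2001}), whereas Section~\ref{sec2} only states shape regularity --- this is harmless since the lemma is used only in the a priori auxiliary estimates, not on the adaptively refined meshes --- and the case $t=\infty$ must be excluded (as you implicitly do), since $H^2\not\hookrightarrow W^{1,\infty}$ in 2D; the paper only ever applies the bound with finite $t$.
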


 Furthermore, by using Lemmas \ref{lemm3.2}, \ref{lem-p-Rhp} and the similar arguments as in \cite{XZ2001}, we have the the following lemma.
\begin{lemma}\label{lem-key}
 Let $(p^i, \phi)$ and $(p_h^i, \phi_h)$ be the solutions of \eqref{weak-p}-\eqref{weak-phi} and \eqref{fem-ph}-\eqref{fem-phih}, respectively.
 If $\phi,~p^i\in H_0^1(\Omega)\cap H^{2}(\Omega)$, $\|p^i-p^i_h\|_0\leq C h$
 and $h \ll 1$, then
 \begin{equation}\label{yin32}
  \|p^i_h-R_hp^i\|_{1}\le Ch.
 \end{equation}
\end{lemma}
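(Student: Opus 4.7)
Let $e_h^i := p_h^i - R_h p^i \in S_0^h$. The plan is to reduce the $H^1$-bound on $e_h^i$ to a bound on a suitable supremum via Lemma \ref{lemm3.2}, rewrite the numerator using Galerkin orthogonality of $R_h$ together with the linearization identity \eqref{lem-R0}, and then control the resulting remainder $R(p^i,\phi,p_h^i,\phi_h,\cdot)$ through \eqref{Rerror2}. Concretely, first I would write
\[
\|e_h^i\|_1 \le C\sup_{\varphi\in S_0^h}\frac{a'(p^i;e_h^i,\varphi)}{\|\varphi\|_1},
\]
and then, using \eqref{p-Rhp-0} to kill the $p^i-R_hp^i$ contribution,
\[
a'(p^i;e_h^i,\varphi)=a'(p^i;p_h^i-p^i,\varphi)=-R(p^i,\phi,p_h^i,\phi_h,\varphi),
\]
where the last equality is \eqref{lem-R0}.

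Next I would invoke \eqref{Rerror2} with $w=p^i,\widetilde{w}=p_h^i,\psi=\phi,\widetilde{\psi}=\phi_h$ (the hypotheses $\phi\in W^{1,\infty}$ and \eqref{gamma-chi-w} follow from Lemma \ref{lem-phih-infty-new1} and smoothness of $\gamma$) to obtain
\[
|R(p^i,\phi,p_h^i,\phi_h,\varphi)|\le C\bigl(\|p^i-p_h^i\|_{1,3}\|p^i-p_h^i\|_{1}+\|\nabla(\phi-\phi_h)\|_0+\|p^i-p_h^i\|_0\bigr)\|\varphi\|_1.
\]
The last two summands are immediate: $\|p^i-p_h^i\|_0\le Ch$ is the hypothesis, and $\|\phi-\phi_h\|_1\le Ch$ follows from Lemma \ref{the-phi-femH1} combined with the same hypothesis. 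For the first summand I would split $p^i-p_h^i=(p^i-R_hp^i)-e_h^i$, then use Lemma \ref{lem-p-Rhp} (giving $\|p^i-R_hp^i\|_{1,3}\le Ch^{2/3}\|p^i\|_2$ and $\|p^i-R_hp^i\|_1\le Ch\|p^i\|_2$) together with the standard inverse inequality on the finite-element function $e_h^i$, namely $\|e_h^i\|_{1,3}\le Ch^{-1/3}\|e_h^i\|_1$, to obtain
\[
\|p^i-p_h^i\|_{1,3}\|p^i-p_h^i\|_{1}\le Ch^{5/3}+Ch^{2/3}\|e_h^i\|_1+Ch^{-1/3}\|e_h^i\|_1^{\,2}.
\]

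Assembling the pieces yields the self-referential inequality
\[
\|e_h^i\|_1\le C_0 h+C_1 h^{2/3}\|e_h^i\|_1+C_2 h^{-1/3}\|e_h^i\|_1^{\,2}.
\]
For $h\ll 1$ the linear term $C_1h^{2/3}\|e_h^i\|_1$ is absorbed into the left-hand side, leaving $\|e_h^i\|_1\le 2C_0h+2C_2h^{-1/3}\|e_h^i\|_1^{\,2}$. The quadratic term on the right is the main obstacle, because it is multiplied by the singular factor $h^{-1/3}$ coming from the inverse inequality. I would handle it by a standard bootstrap/continuity argument: the quadratic $x\mapsto 2C_2h^{-1/3}x^2-x+2C_0h$ has two positive roots, $x_1\sim 2C_0h$ and $x_2\sim h^{1/3}/(2C_2)$, and one needs to show $\|e_h^i\|_1$ lies in the small branch $[0,x_1]$. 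This follows from a preliminary rough estimate $\|e_h^i\|_1\to 0$ as $h\to 0$ (derivable along the same lines but using only $\|e_h^i\|_1\le C$ a priori for $h$ small), which then forces $\|e_h^i\|_1<x_2$ and hence $\|e_h^i\|_1\le x_1\le Ch$, completing the proof.
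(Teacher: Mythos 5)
Your reduction is genuinely different from the paper's: you argue directly on $e_h^i=p_h^i-R_hp^i$ via Lemma \ref{lemm3.2}, \eqref{p-Rhp-0} and \eqref{lem-R0}, whereas the paper never estimates $p_h^i$ directly. It defines a map $\Phi$ on the set $\mathcal{B}=\{v_h\in S_0^h:\|v_h-R_hp^i\|_1\le Ch,\ \|v_h\|_{1,p}\le C(1+\|p^i\|_2)\}$, shows $\Phi(\mathcal{B})\subset\mathcal{B}$, and obtains $p_h^i$ as a Brouwer fixed point inside $\mathcal{B}$. The point of that construction is precisely to avoid the difficulty your route runs into: for $\chi\in\mathcal{B}$ the bound $\|R_hp^i-\chi\|_{1,3}\le Ch^{-1/3}\cdot Ch=Ch^{2/3}$ is available \emph{by hypothesis on} $\chi$, so no self-referential quadratic term ever appears.

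The genuine gap is in your final bootstrap. From $\|e_h^i\|_1\le C_0h+C_2h^{-1/3}\|e_h^i\|_1^{\,2}$ you must rule out the large root $x_2\sim h^{1/3}/(2C_2)$, i.e.\ you need the quantitative a priori information $\|e_h^i\|_1<c\,h^{1/3}$ for a specific small $c$. You propose to get this from a ``preliminary rough estimate $\|e_h^i\|_1\to0$, derivable along the same lines but using only $\|e_h^i\|_1\le C$.'' This does not work: inserting $\|e_h^i\|_1\le C$ into the quadratic term gives $C_2h^{-1/3}C^2\to\infty$, so the same lines yield no improvement at all; and no a priori bound $\|e_h^i\|_1\le C$ has been established either, since the hypothesis $\|p^i-p_h^i\|_0\le Ch$ only controls $L^2$, and the inverse inequality from $L^2$ to $W^{1,3}$ costs $h^{-4/3}$, giving merely $\|p^i-p_h^i\|_{1,3}\le Ch^{-1/3}$. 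Moreover, even a qualitative statement $\|e_h^i\|_1\to0$ would not suffice, because the threshold $x_2$ itself shrinks like $h^{1/3}$: you need a rate, not just convergence. To close the argument you would have to first prove a separate smallness estimate for $\|p^i-p_h^i\|_{1,3}$ (which is essentially as hard as the lemma), or switch to the paper's fixed-point formulation, where the required smallness is built into the definition of the set on which the map acts.
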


\begin{proof}
 For any $\chi \in S^h_0$, let $\Phi:S^h_0\rightarrow S^h_0$ be defined by
 \begin{equation}\label{A'}
  a'(p^i;\Phi(\chi),v_h)=a'(p^i;p^i,v_h)-R(p^i,\phi,\chi,\phi_h,v_h), ~~\forall v_h\in S^h_0.
 \end{equation}
 Obviously, $\Phi$ is continuous. Define
 \begin{equation} \label{sh-0}
  \mathcal{B}=\{v_h\in S^h_0: \|v_h-R_hp^i\|_1\le Ch, ~ \|v_h\|_{1,p}\leq C(1 + \|p^i\|_2)\}.		
 \end{equation}
 If $\Phi(\mathcal{B})\subset \mathcal{B}$, then by Brouwer's fixed point theorem, there exists a fixed point $p^i_h\in\mathcal{B}$ such that $\Phi(p^i_h)= p^i_h$, which combining with \eqref{A'} and Lemma \ref{lem-Remainder} yields that
 $p_h^i$ is the finite element solution of \eqref{fem-ph} and \eqref{yin32} holds from $p^i_h\in\mathcal{B}$. Hence, in order to obtain \eqref{yin32}, we only need to show $\Phi(\mathcal{B})\subset \mathcal{B}$.

 For any $\chi\in B$, by \eqref{p-Rhp-0} and \eqref{A'}, we get
 \begin{equation*}
  a'(p^i;\Phi(\chi)-R_hp^i,v_h)=R(p^i,\phi,\chi,\phi_h,v_h), ~~\forall v_h\in S^h_0.
 \end{equation*}
 Since $\Phi(\chi)-R_hp^i \in S_0^h$, by Lemma \ref{lemm3.2}, it yields
 \begin{equation*}
  \|\Phi(\chi)-R_hp^i\|_{1} \le C\mathop{\sup }\limits_{\varphi\in S_0^h} \frac{{a'({p^i};\Phi(\chi)-R_hp^i,\varphi)}}{{{{\left\| \varphi \right\|}_{1}}}} \le C\mathop {\sup }\limits_{\varphi  \in S_0^h(\Omega )} \frac{{|R(p^i,\phi,\chi,\phi_h,\varphi)|}}{{{{\left\| \varphi \right\|}_{1}}}}.
 \end{equation*}
 From \eqref{Rerror2}, \eqref{phih-infty-new0} and $\chi\in \mathcal{B}$, there holds
 \begin{align}
  \|\Phi(\chi)-R_hp^i\|_{1} \le C\big(\|p^i-\chi\|_{1,3}\|p^i-\chi\|_{1}
   +\|\nabla\phi-\nabla\phi_h\|_0 + \|p^i-\chi\|_0\big).
 \end{align}
 Then from \eqref{phi-H1}, we have
 \begin{align}\label{yyphi-Rh}
  \|\Phi(\chi)-R_hp^i\|_{1}
  \le C \big( {\|p^i-\chi\|_{1,3}\|p^i-\chi\|_{1} + h +\|p^i-\chi\|_0}\big).
 \end{align}
 By inverse inequality and $\chi\in \mathcal{B}$, from \eqref{sh-0}, we get
 \begin{equation}\label{P'}
  \|R_hp^i-\chi\|_{1,3}\le Ch^{2(1/3-1/2)}\|R_hp^i-\chi\|_1
  \le Ch^{2/3}.
 \end{equation}

 In addition, combining \eqref{yin30} and \eqref{P'}, it follows that
 \begin{align*}
  \|p^i-\chi\|_{1,3}\|p^i-\chi\|_1
  &\le C\big(\|p^i-R_hp^i\|_{1,3}
     + \|R_hp^i-\chi\|_{1,3}\big)
     \big(\|p^i-R_hp^i\|_{1} + \|R_hp^i-\chi\|_{1}\big) \\
  &\le Ch^{5/3}.
 \end{align*}
 Hence, from \eqref{yyphi-Rh}, we have
 \begin{align}\label{P''}
  \|\Phi(\chi)-R_hp^i\|_1
  &\le C \Big( h^{5/3} + h + \|p^i-\chi\|_0 \Big) \notag \\
  &\le C \big( h + \|p^i-\chi\|_0 \big).
 \end{align}
 Then from \eqref{yin301} and \eqref{P''}, we deduce that
 \begin{align}\label{tang1}
  \|\Phi(\chi)\|_{1,p}
  &\le \|\Phi(\chi)-R_hp^i\|_{1,p}+\|R_hp^i\|_{1,p}  \notag \\
  &\le Ch^{2(1/p-1/2)}\|\Phi(\chi)-R_hp^i\|_{1}+\|R_hp^i\|_{1,p} \notag\\
  &\le C \Big( h^{2(1/p-1/2)}\big( h + \|p^i-\chi\|_0 \big)
         + \|p^i\|_2 \Big).
 \end{align}
 For the term $\|p^i-\chi\|_0$, from $\chi \in \mathcal{B}$, \eqref{yin30} and \eqref{sh-0}, we have
 \begin{equation}\label{tang2}
 \|p^i-\chi\|_0\le \|p^i-R_hp^i\|_0+\|R_hp^i-\chi\|_0\le Ch.
 \end{equation}
 Substituting \eqref{tang2} into \eqref{P''} and \eqref{tang1}, respectively, it easily yields
 \begin{align*}
  \|\Phi(\chi)-R_hp^i\|_1 \le C h
 \end{align*}
 and
 \begin{align*}
  \|\Phi(\chi)\|_{1,p}\le C (1 + \|p^i\|_2).
 \end{align*}
 Hence $\Phi(\mathcal{B})\subset \mathcal{B}$. This completes the proof of \eqref{yin32}.    $\hfill\Box$
\end{proof}

\setcounter{equation}{0}
\setcounter{lemma}{0}
\section{A posteriori error estimates} \label{sec-posteriori}

\noindent
 In this section, we first present the a posteriori error estimates including the global upper bounds and the local lower bounds for the nonlinear PNP equations \eqref{MPNP-model} with boundary conditions \eqref{MPNP-model-bd0}. Then a typical finite element adaptive algorithm is developed based on the a posteriori error analysis.

\subsection{Upper bound} \label{subsec-Upper}

\noindent
 In this subsection, we shall derive the global upper bounds of the a posteriori error indicators for both the electrostatic potential and concentrations.

 First, the upper bound of $\|\nabla(\phi-\phi_h)\|_{0,\Omega}$ is presented as follows.
\begin{theorem}\label{theo-upper-phi}
 Let $(p^i, \phi)$ and $(p_h^i, \phi_h)$ be the solutions of \eqref{weak-p}-\eqref{weak-phi} and \eqref{fem-ph}-\eqref{fem-phih}, respectively. There holds
 \begin{equation}\label{phi-upper}
 \|\nabla(\phi-\phi_h)\|_{0,\Omega} \le C\Big(\eta_{\phi}(p^i_{h},\phi_{h}) +\sum\limits^{n}_{i=1}\|p^i-p^i_{h}\|_{0,\Omega} \Big),
 \end{equation}
 where
 \begin{align*}
 &\eta_{\phi}(p^i_{h},\phi_{h}) = \sum_{\tau\in\mathcal{T}^h}
        \Big( h_\tau\|R_{1h}(p^i_{h},\phi_{h})\|_{0,\tau}
           + \|D_{h}(\phi_{h})\|_{0,\tau} \Big), \\
 &R_{1h}(p^{i}_{h},\phi_{h})=\sum_{i=1}^nq^ip^i_{h}
           +{\rm div}(G_{h}\phi_h) + f, \\
 &D_{h}(\phi_{h})= G_{h}\phi_{h}-\epsilon(x)\nabla\phi_{h}.
 \end{align*}
\end{theorem}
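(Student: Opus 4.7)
The plan is to derive the bound via a standard energy argument, exploiting coercivity of $\widetilde{a}(\cdot,\cdot)$ together with the flux recovery identity $\epsilon(x)\nabla\phi_h = G_h\phi_h - D_h(\phi_h)$. I would begin with $e = \phi-\phi_h \in H_0^1(\Omega)$ and use the ellipticity of $\epsilon(x)$ to write
\[
 \|\nabla(\phi-\phi_h)\|_{0,\Omega}^2 \le C\,\widetilde{a}(\phi-\phi_h,e).
\]
Then, applying \eqref{weak-phi} on $e$ gives $\widetilde{a}(\phi-\phi_h,e) = (\widetilde{f}(p^i),e) - (\epsilon(x)\nabla\phi_h,\nabla e)$. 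The key manoeuvre is to split $e = (e-\pi_h e) + \pi_h e$, using Galerkin orthogonality from \eqref{fem-phih} on the $S_0^h$-piece $\pi_h e$, so that only $(\widetilde{f}(p^i)-\widetilde{f}(p_h^i),\pi_h e)$ survives there.

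Next, I would introduce the smoothened flux by writing $(\epsilon(x)\nabla\phi_h,\nabla(e-\pi_h e)) = (G_h\phi_h,\nabla(e-\pi_h e)) - (D_h(\phi_h),\nabla(e-\pi_h e))$. Because $G_h\phi_h \in S^h\times S^h$ is piecewise polynomial and globally continuous and $e-\pi_h e \in H_0^1(\Omega)$, integration by parts yields $(G_h\phi_h,\nabla(e-\pi_h e)) = -({\rm div}(G_h\phi_h),e-\pi_h e)$ with no boundary contribution. Adding and subtracting $\widetilde{f}(p_h^i) = \sum_{i=1}^n q^i p_h^i + f$ then groups the volume terms into exactly $R_{1h}(p_h^i,\phi_h) = \sum_{i=1}^n q^i p_h^i + {\rm div}(G_h\phi_h) + f$ paired against $e-\pi_h e$, plus the concentration defect $\widetilde{f}(p^i)-\widetilde{f}(p_h^i) = \sum_i q^i(p^i-p_h^i)$ paired against $e$.

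To finish, I would bound each piece. The Cl\'ement estimate \eqref{PCle-interpolation1} gives $\|e-\pi_h e\|_{0,\tau} \le C h_\tau \|\nabla e\|_{0,\omega_\tau}$, so Cauchy--Schwarz element-by-element and finite overlap of the patches $\omega_\tau$ yield
\[
 |(R_{1h}(p_h^i,\phi_h),\,e-\pi_h e)| \le C\Big(\sum_{\tau} h_\tau^2 \|R_{1h}\|_{0,\tau}^2\Big)^{1/2}\|\nabla e\|_{0,\Omega},
\]
which is controlled by $\eta_\phi(p_h^i,\phi_h)\|\nabla e\|_{0,\Omega}$ via $\ell^2 \le \ell^1$. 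The defect term is bounded similarly: $|(D_h(\phi_h),\nabla(e-\pi_h e))| \le \|D_h(\phi_h)\|_0 \|\nabla(e-\pi_h e)\|_0 \le C\,\eta_\phi\,\|\nabla e\|_{0,\Omega}$ using \eqref{PCle-interpolation3}. Finally, $|(\widetilde{f}(p^i)-\widetilde{f}(p_h^i),e)| \le C\sum_i\|p^i-p_h^i\|_{0,\Omega}\|\nabla e\|_{0,\Omega}$ by Poincar\'e. Dividing through by $\|\nabla e\|_{0,\Omega}$ yields \eqref{phi-upper}.

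The main obstacle, which I would treat carefully, is the bookkeeping in the "integration by parts + adding/subtracting $\widetilde{f}(p_h^i)$" step: we must ensure that the nonlinearity $\widetilde{f}$ is linear in $p^i$ (as is visible from its definition here) so that the concentration error appears cleanly as $\sum_i q^i(p^i-p_h^i)$, and that the continuity of $G_h\phi_h$ across interelement edges is exploited so that no edge jump residuals enter; were $G_h\phi_h$ not globally in $H^1$, additional edge contributions would have to be estimated. Everything else is a routine combination of Cl\'ement estimates, Cauchy--Schwarz, and the elementary inequality $\ell^2 \le \ell^1$ converting the squared sums into the summed form of $\eta_\phi$.
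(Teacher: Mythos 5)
Your proposal is correct and follows essentially the same route as the paper's proof: the identity obtained from \eqref{weak-phi} plus Galerkin orthogonality of \eqref{fem-phih}, the insertion of $G_h\phi_h$ with integration by parts (no edge jumps since $G_h\phi_h\in S^h\times S^h$ is continuous), the regrouping into $R_{1h}$ and $D_h(\phi_h)$, and the Cl\'ement estimates \eqref{PCle-interpolation1} and \eqref{PCle-interpolation3} with $\chi=\pi_h w$. The only cosmetic difference is that you fix the test function $w=\phi-\phi_h$ and invoke coercivity at the outset, whereas the paper keeps a generic $w\in H_0^1(\Omega)$ and sets $w=\phi-\phi_h$ only in the final step.
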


\begin{proof}
 For any $w\in H_{0}^1(\Omega)$, $\chi\in S_0^h$, it follows from \eqref{weak-phi} and \eqref{fem-phih} that
 \begin{align}
  \widetilde{a}(\phi-\phi_{h},w)
  &= \widetilde{a}(\phi,w) - \widetilde{a}(\phi_{h},w)
     = (\widetilde{f}(p^i),w) - \widetilde{a}(\phi_{h},w)  \notag \\
  &= (\widetilde{f}(p^i),w-\chi) + (\widetilde{f}(p^i),\chi) - \widetilde{a}(\phi_{h},w) \notag \\
  &= (\widetilde{f}(p^i),w-\chi)
     + (\widetilde{f}(p^i)-\widetilde{f}(p_h^i),\chi)
     + (\widetilde{f}(p_h^i),\chi) - \widetilde{a}(\phi_{h},w-\chi) - \widetilde{a}(\phi_{h},\chi) \notag \\
  &= \big(\sum_{i=1}^nq^ip^i+f,w-\chi\big)-\big(\sum_{i=1}^nq^i(p^i-p^i_h),w-\chi\big) \notag \\
  &\quad   +\big(\sum_{i=1}^nq^i(p^i-p^i_h),w\big) - \widetilde{a}(\phi_{h},w-\chi)  \notag \\
  &= \big(\sum_{i=1}^nq^ip_h^i+f,w-\chi\big)
      + \big(\sum_{i=1}^nq^i(p^i-p^i_h),w\big)
      - \widetilde{a}(\phi_{h},w-\chi).
    \label{upphi-1}
 \end{align}
 By Green's formula, we rewrite the third term on the right-hand side of \eqref{upphi-1} as follows
 \begin{align}\label{upphi-2}
  - \widetilde{a}(\phi_{h},w-\chi)
  &= -\int_\Omega \epsilon(x)\nabla\phi_h\cdot\nabla(w-\chi) \notag \\
  &= \int_\Omega (G_h\phi_h - \epsilon(x)\nabla\phi_h)\cdot\nabla(w-\chi)
    - \int_\Omega G_h\phi_h\cdot\nabla(w-\chi) \notag \\
 &=  \int_\Omega (G_h\phi_h - \epsilon(x)\nabla\phi_h)\cdot\nabla(w-\chi)
    + \sum_{\tau\in\mathcal{T}^h}\int_{\tau} {\rm div}(G_h\phi_h)(w-\chi).
 \end{align}
 Substituting \eqref{upphi-2} into \eqref{upphi-1}, we get
 \begin{align}
 \widetilde{a}(\phi-\phi_{h},w)
 &=\sum_{\tau\in\mathcal{T}^h}\int_{\tau}R_{1h}(p^{i}_{h},\phi_{h})(w-\chi)
         +\sum_{\tau\in\mathcal{T}^h}\int_{\tau} D_{h}(\phi_{h})\cdot
        \nabla(w-\chi) \notag \\
 &\quad +\sum_{\tau\in\mathcal{T}^h}\int_{\tau}\big(\sum_{i=1}^nq^i(p^i-p^i_h)\big)w  \label{upphi-3-0}\\
 & \le C\sum_{\tau\in\mathcal{T}^h}\Big(\big\|R_{1h}(p^{i}_{h},\phi_{h})\|_{0,\tau}
               \|w-\chi\|_{0,\tau}
      +\|D_{h}(\phi_{h})\|_{0,\tau}\|\nabla(w-\chi)\|_{0,\tau}
        \notag \\
 &\quad +\sum^{n}_{i=1}\|p^i-p^i_{h}\|_{0,\tau}\|w\|_{0,\tau} \Big),
   \label{upphi-3}
 \end{align}
 where
 \begin{align*}
  R_{1h}(p^{i}_{h},\phi_{h})=\sum_{i=1}^nq^ip^i_{h}
           +{\rm div}(G_{h}\phi_h) + f, ~~~
  D_{h}(\phi_{h})= G_{h}\phi_{h} - \epsilon(x)\nabla\phi_{h}.
 \end{align*}
 Taking $\chi=\pi_hw$ in \eqref{upphi-3} and using Cl\'ement interpolation estimates \eqref{PCle-interpolation1} and \eqref{PCle-interpolation3}, it yields
 \begin{align}
 \widetilde{a}(\phi-\phi_{h},w)
 & \le C\sum_{\tau\in\mathcal{T}^h} \Big(\big(h_\tau\|R_{1h}(p^{i}_{h},\phi_{h})\|_{0,\tau}
      + \|D_{h}(\phi_{h})\|_{0,\tau} \big)\|\nabla w\|_{0,\omega_\tau} \notag\\
 &\quad  + \sum^{n}_{i=1}\|p^i-p^i_{h}\|_{0,\tau}\|w\|_{0,\tau} \Big).
  \label{upphi-4}
 \end{align}
 Then the desired result \eqref{phi-upper} can be easily obtained by taking $ w=\phi-\phi_{h}$ in \eqref{upphi-4}.   $\hfill\Box$
\end{proof}

Now we turn to present the upper bound of $\|\nabla(p^i-p_h^i)\|_{0,\Omega}$.
 First, we need the following lemma.
 \begin{lemma} \label{lemma-w-sup} \cite{XZ2001,Y.Yang2006}
 Suppose $p^i$ is an isolated solution and the finite element solution $p_h^i$ is sufficiently close to the exact solution $p^i$ provided by $h\ll1$. Then for any $w\in H_0^1(\Omega)$, there holds
 \begin{equation} \label{lem-w}
  \|w\|_{1,\Omega} \le C \sup\limits_{v\in H_0^1(\Omega)} \frac{a'(p_h^i;w,v)}{\|v\|_{1,\Omega}}.
  \end{equation}
\end{lemma}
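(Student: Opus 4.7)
The plan is to reduce the claim to the analogous inequality for the bilinear form evaluated at the exact solution $p^i$, and then transfer it to $p^i_h$ by a perturbation argument that exploits the smallness of $\|p^i-p^i_h\|$ in appropriate norms.

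The starting point is the continuous counterpart of Lemma \ref{lemm3.2}. Since $\mathcal{L}'(p^i,\phi):H_0^1(\Omega)\to H^{-1}(\Omega)$ is an isomorphism by the isolated-solution hypothesis, the same inf-sup argument used to establish Lemma \ref{lemm3.2} yields
\begin{equation*}
 \|w\|_1 \le C\sup_{v\in H_0^1(\Omega)}\frac{a'(p^i;w,v)}{\|v\|_1},\qquad \forall w\in H_0^1(\Omega).
\end{equation*}

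Next I would write $a'(p^i;w,v)=a'(p^i_h;w,v)+\bigl[a'(p^i;w,v)-a'(p^i_h;w,v)\bigr]$ and estimate the residual. Inspecting the definition \eqref{Hawpsiv}, $a'(p^i;w,v)-a'(p^i_h;w,v)$ splits into four terms, namely $\bigl((\alpha(\cdot,p^i)-\alpha(\cdot,p^i_h))\nabla w,\nabla v\bigr)$, $\bigl((\alpha_y(\cdot,p^i)\nabla p^i-\alpha_y(\cdot,p^i_h)\nabla p^i_h)w,\nabla v\bigr)$, and the analogous $\beta_y$ and $g_y$ contributions. Using the $C^1$ regularity of the coefficients in their second argument, H\"older's inequality with exponent triples such as $(\infty,3,6)$ or $(\infty,6,3)$, and the two-dimensional Sobolev embedding $H^1\hookrightarrow L^q$ for every $q<\infty$, each piece is dominated by $C\delta(h)\|w\|_1\|v\|_1$, where
\begin{equation*}
 \delta(h) := \|p^i-p^i_h\|_{0,\infty}+\|p^i-p^i_h\|_{1,3}.
\end{equation*}
A combination of Lemmas \ref{lem-p-Rhp} and \ref{lem-key}, the inverse inequality on $S^h_0$, and the 2D embedding $W^{1,p}\hookrightarrow L^\infty$ for $p>2$ shows that $\delta(h)\to 0$ as $h\to 0$.

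Putting the two ingredients together, for any $w\in H_0^1(\Omega)$,
\begin{equation*}
 \|w\|_1 \le C\sup_{v\in H_0^1(\Omega)}\frac{a'(p^i_h;w,v)}{\|v\|_1}+C\delta(h)\|w\|_1,
\end{equation*}
and taking $h$ small enough that $C\delta(h)\le 1/2$ absorbs the last term into the left-hand side and produces the desired inequality \eqref{lem-w}. The main obstacle is controlling the contribution $\alpha_y(\cdot,p^i)\nabla p^i-\alpha_y(\cdot,p^i_h)\nabla p^i_h$, which couples the gradients of both the exact and the discrete solutions; bounding it in the correct norms is exactly the reason the $W^{1,p}$ control on $p^i_h$ provided by Lemma \ref{lem-key} was needed.
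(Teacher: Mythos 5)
The paper does not actually prove this lemma: it is stated with a citation to \cite{XZ2001,Y.Yang2006} and used as a black box, so there is no in-paper argument to compare against. Your perturbation proof is the standard one behind those citations and is essentially correct: the continuous inf--sup at $p^i$, the four-term splitting of $a'(p^i;w,v)-a'(p^i_h;w,v)$ dictated by \eqref{Hawpsiv}, the H\"older/Sobolev bookkeeping with $\|p^i-p^i_h\|_{1,3}\le Ch^{2/3}$ (as in \eqref{ph-pi13}) and $\|p^i-p^i_h\|_{0,\infty}\to 0$ via the inverse inequality on $S^h_0$, the uniform $W^{1,p}$ bound from Lemma \ref{lem-key}, and the embedding $W^{1,p}\hookrightarrow L^\infty$ for $p>2$ in two dimensions, followed by absorption for $h$ small. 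You also correctly identify that the term $\alpha_y(\cdot,p^i)\nabla p^i-\alpha_y(\cdot,p^i_h)\nabla p^i_h$ is the only genuinely delicate one and that the $L^\infty$ control of $p^i-p^i_h$ is unavoidable for the $\big((\alpha(\cdot,p^i)-\alpha(\cdot,p^i_h))\nabla w,\nabla v\big)$ term, since $\nabla w$ and $\nabla v$ carry no extra integrability. One caveat worth flagging: your starting inequality is the continuous inf--sup for $a'(p^i;\cdot,\cdot)$, which is the bilinear form of $H'(p^i)$, whereas the paper's standing assumption is that the full linearization $\mathcal{L}'(p^i,\phi)$ (which contains the extra zeroth-order term $-\mathrm{div}\big(\gamma_y(\cdot,p^i)\nabla\phi\big)\psi$) is an isomorphism. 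The paper itself elides this distinction in Lemma \ref{lemm3.2}, so your assumption is consistent with the framework being used, but strictly speaking the continuous inf--sup for $a'(p^i;\cdot,\cdot)$ is an additional (implicitly made) hypothesis rather than a literal consequence of the stated isomorphism assumption.
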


 The global upper bound for $\|\nabla(p^i-p_h^i)\|_{0,\Omega}$ is presented as follows.
\begin{theorem}\label{theo-upper-p}
 Let $(p^i, \phi)$ and $(p_h^i, \phi_h)$ be the solutions of \eqref{weak-p}-\eqref{weak-phi} and \eqref{fem-ph}-\eqref{fem-phih}, respectively. Suppose the finite element solution $p_h^i$ is sufficiently close to the exact solution $p^i$.
 If $\phi, ~p^i\in H_0^1(\ome)\cap H^2(\ome)$, $\|p^i-p^i_h\|_0\leq Ch$
 and $h\ll 1$, then there holds
 \begin{align} \label{upper-p-0}
  \|\nabla(p^i-p_h^i)\|_{0,\Omega} \le  C\Big(\eta_{p^i}(p_h^i,\phi_h) +\sum\limits^{n}_{i=1}\|p^i-p_h^i\|_{0,\Omega}\Big),
 \end{align}
 where
 \begin{align*}
  &\eta_{p^i}(p_h^i,\phi_h)
  =  \sum_{\tau\in\mathcal{T}^h}
       \Big( \|D_h(p_h^i)\|_{0,\tau}
         + \|\gamma(x,p_h^i)(\widetilde{G}_h\phi_h-\nabla\phi_h)\|_{0,\tau}
         + h_\tau \|R_{2h}(p_h^i,\phi_h)\|_{0,\tau} \Big) \notag \\
  &\quad\quad\quad\quad\quad\quad +\sum_{\tau\in\mathcal{T}^h}
         \Big( \|D_{h}(\phi_{h})\|_{0,\tau}
          + h_\tau\|R_{1h}(p^i_{h},\phi_{h})\|_{0,\tau} \Big), \\
 &D_h(p_h^i)= G_hp_h^i-\alpha(x,p_h^i)\nabla p_h^i, ~~~
  D_{h}(\phi_{h})= G_{h}\phi_{h}-\epsilon(x)\nabla\phi_{h}, \\
 &R_{1h}(p^{i}_{h},\phi_{h})=\sum_{i=1}^nq^ip^i_{h}
       +{\rm div}(G_{h}\phi_h)+f, \\
 &R_{2h}(p^{i}_{h},\phi_{h})= {\rm div}(G_hp_h^i)
       +{\rm div}\big(\beta(x,p_h^i)\big)-g(x,p_h^i)
       +{\rm div}\big(\gamma(x,p_h^i)\widetilde{G}_h\phi_h\big).
 \end{align*}
\end{theorem}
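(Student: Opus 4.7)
The plan is to mirror the structure of the proof of Theorem \ref{theo-upper-phi}: start from the ``dual'' estimate in Lemma \ref{lemma-w-sup}, use the remainder identity of Lemma \ref{lem-Remainder} together with Galerkin orthogonality in \eqref{fem-ph} to expose a computable residual, insert the recovered fluxes $G_h p_h^i$ and $\widetilde G_h\phi_h$, and close by invoking the already-established bound of Theorem \ref{theo-upper-phi} for $\|\nabla(\phi-\phi_h)\|_0$.

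First I would apply Lemma \ref{lemma-w-sup} with $w=p^i-p_h^i$, reducing the problem to controlling $a'(p_h^i;p^i-p_h^i,v)/\|v\|_1$ uniformly in $v\in H_0^1(\Omega)$. Inserting \eqref{Remainder} with $w=p_h^i$, $\psi=\phi_h$, $\widetilde w=p^i$, $\widetilde\psi=\phi$, and using the continuous equation $a(p^i,v)+b(p^i,\phi,v)=0$, gives the clean identity
\begin{equation*}
 a'(p_h^i;p^i-p_h^i,v) = -[a(p_h^i,v)+b(p_h^i,\phi_h,v)] - R(p_h^i,\phi_h,p^i,\phi,v).
\end{equation*}
By Galerkin orthogonality from \eqref{fem-ph}, the test function $v$ may be replaced by $v-\pi_h v$ inside the bracket.

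For the flux--recovery step I would split $\alpha(\cdot,p_h^i)\nabla p_h^i = G_h p_h^i - D_h(p_h^i)$ and $\nabla\phi_h = \widetilde G_h\phi_h - (\widetilde G_h\phi_h-\nabla\phi_h)$, then integrate by parts element by element. Because $G_h p_h^i$, $\beta(\cdot,p_h^i)$ and $\gamma(\cdot,p_h^i)\widetilde G_h\phi_h$ are globally continuous (the recovery operators produce $S^h\times S^h$ fields and $p_h^i\in C^0(\overline\Omega)$), the interelement boundary contributions cancel, while the $\partial\Omega$ terms vanish because $v-\pi_h v\in H_0^1(\Omega)$. What survives is
\begin{equation*}
 -\sum_\tau\!\int_\tau R_{2h}(p_h^i,\phi_h)(v-\pi_h v) - \sum_\tau\!\int_\tau D_h(p_h^i)\!\cdot\!\nabla(v-\pi_h v) - \sum_\tau\!\int_\tau\gamma(\cdot,p_h^i)(\widetilde G_h\phi_h-\nabla\phi_h)\!\cdot\!\nabla(v-\pi_h v),
\end{equation*}
which by Cl\'ement estimates \eqref{PCle-interpolation1} and \eqref{PCle-interpolation3} is bounded by the first three families of indicators in $\eta_{p^i}(p_h^i,\phi_h)$ times $\|v\|_1$.

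The remainder $R(p_h^i,\phi_h,p^i,\phi,v)$ is handled directly by Lemma \ref{lem-Remainder}: its hypothesis $\phi_h\in W^{1,\infty}$ is furnished by Lemma \ref{lem-phih-infty-new1} and the Lipschitz condition on $\gamma$ is automatic for smooth data, giving $|R|\le C(\|p^i-p_h^i\|_{1,3}\|p^i-p_h^i\|_1+\|\nabla(\phi-\phi_h)\|_0+\|p^i-p_h^i\|_0)\|v\|_1$. The factor $\|\nabla(\phi-\phi_h)\|_0$ is estimated by Theorem \ref{theo-upper-phi}, and crucially the resulting $\eta_\phi$ is identical to the last two families in $\eta_{p^i}$, so no new terms are introduced. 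Combining Lemma \ref{lem-p-Rhp}, Lemma \ref{lem-key} and an inverse inequality yields $\|p^i-p_h^i\|_{1,3}=O(h^{2/3})$, so the cross term $\|p^i-p_h^i\|_{1,3}\|p^i-p_h^i\|_1$ is a higher-order perturbation that is absorbed into $\|p^i-p_h^i\|_1$ on the left for $h\ll 1$, producing \eqref{upper-p-0}. The main technical obstacle will be this absorption: the hypothesis $h\ll 1$ is used both to activate Lemma \ref{lemma-w-sup} and to eliminate the cubic perturbation, so one must chase the chain of a priori bounds in Lemmas \ref{lem-p-Rhp}--\ref{lem-key} carefully to retain a strictly positive power of $h$. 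A secondary delicate point is confirming that the interelement jump terms genuinely vanish after integration by parts, which relies on the global continuity of $G_h p_h^i$ and $\gamma(\cdot,p_h^i)\widetilde G_h\phi_h$; were any recovered field only piecewise continuous, edge--jump residuals would have to be added to the indicator.
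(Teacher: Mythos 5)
Your proposal follows essentially the same route as the paper's proof: the identity $a'(p_h^i;p^i-p_h^i,v)=-a(p_h^i,v)-b(p_h^i,\phi_h,v)-R(p_h^i,\phi_h,p^i,\phi,v)$, Galerkin orthogonality with $\chi=\pi_h v$, elementwise integration by parts after inserting the continuous recovered fluxes $G_hp_h^i$ and $\widetilde G_h\phi_h$ (so no edge jumps arise), Cl\'ement estimates, the bound $\|p^i-p_h^i\|_{1,3}\le Ch^{2/3}$ via Lemmas \ref{lem-p-Rhp} and \ref{lem-key} with an inverse inequality to absorb the quadratic remainder term, Lemma \ref{lemma-w-sup} to recover the $H^1$ norm, and finally Theorem \ref{theo-upper-phi} to replace $\|\nabla(\phi-\phi_h)\|_0$ by $\eta_\phi$ plus $L^2$ terms. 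The argument is correct and matches the paper's proof in all essential steps.
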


\begin{proof}
 For any $v\in H_{0}^1(\Omega)$, $\chi\in S_0^h$, first, by \eqref{fem-ph}, we have
 \begin{align}\label{upper-p-1}
  -a(p_h^i,v)
  &=-\big(a(p_h^i,v-\chi)+a(p_h^i,\chi)\big) \notag \\ &=-a(p_h^i,v-\chi)+b(p_h^i,\phi_h,\chi) \notag \\
  &=-\big(\alpha(x,p_h^i)\nabla p_h^i+\beta(x,p_h^i),\nabla(v-\chi)\big)
         -\big(g(x,p_h^i),v-\chi\big)+b(p_h^i,\phi_h,\chi) \notag \\
  &=-\big(\alpha(x,p_h^i)\nabla p_h^i-G_hp_h^i,\nabla(v-\chi)\big)
         -\big(G_hp_h^i+\beta(x,p_h^i),\nabla(v-\chi)\big) \notag \\
  &\quad -\big(g(x,p_h^i),v-\chi\big)+b(p_h^i,\phi_h,\chi) \notag \\
  &=\sum_{\tau\in\mathcal{T}^h}\int_{\tau}\big(G_hp_h^i-\alpha(x,p_h^i)
          \nabla p_h^i\big)\cdot\nabla(v-\chi)
         +\sum_{\tau\in\mathcal{T}^h}\int_{\tau}
         \mbox{div}\big(G_hp_h^i+\beta(x,p_h^i)\big)(v-\chi)
         \notag \\
  &\quad -\sum_{\tau\in\mathcal{T}^h}\int_{\tau}
         g(x,p_h^i)(v-\chi) + b(p_h^i,\phi_h,\chi) \notag \\
  &=\widetilde{A}(p_h^i,v,\chi) + b(p_h^i,\phi_h,\chi),
 \end{align}
 where
 \begin{align} \label{widetilde-A-phvchi}
 &\widetilde{A}(p_h^i,v,\chi)= \sum_{\tau\in\mathcal{T}^h}\int_{\tau}
           D_h(p_h^i)\cdot\nabla(v-\chi)
         +\sum_{\tau\in\mathcal{T}^h}\int_{\tau}R_h(p_h^i)(v-\chi), \\ \notag
 &D_h(p_h^i)= G_hp_h^i-\alpha(x,p_h^i)\nabla p_h^i,  ~~~
  R_h(p_h^i)={\rm div}(G_hp_h^i)
       +\mbox{div}\big(\beta(x,p_h^i)\big)-g(x,p_h^i).
 \end{align}
 On the other hand, by \eqref{weak-p} and taking $\widetilde{w}=p^i,\widetilde{\psi}=\phi,w=p_h^i,\psi=\phi_h$ in \eqref{Remainder}, we have
 \begin{align} \label{upper-p-2}
  a'(p_h^i;p^i-p_h^i,v)
  &=a(p^i,v)+b(p^i,\phi,v)-a(p_h^i,v)-b(p_h^i,\phi_h,v)
    -R(p_h^i,\phi_h,p^i,\phi,v) \notag\\
  &= -a(p_h^i,v)-b(p_h^i,\phi_h,v)-R(p_h^i,\phi_h,p^i,\phi,v).
  \end{align}
 Combining \eqref{upper-p-1}, \eqref{widetilde-A-phvchi} and \eqref{upper-p-2}, it yields
 \begin{align} \label{upper-p-3}
 a'&(p_h^i;p^i-p_h^i,v)
  = \widetilde{A}(p_h^i,v,\chi) + b(p_h^i,\phi_h,\chi-v)-R(p_h^i,\phi_h,p^i,\phi,v)
    \notag \\
 &=\widetilde{A}(p_h^i,v,\chi)+\big(\gamma(x,p_h^i)\nabla\phi_h,\nabla(\chi-v)\big)
       - R(p_h^i,\phi_h,p^i,\phi,v) \notag \\
 &=\widetilde{A}(p_h^i,v,\chi) +
         \big(\gamma(x,p_h^i)(\nabla\phi_h-\widetilde{G}_h\phi_h),\nabla(\chi-v)\big)
        + \big(\gamma(x,p_h^i)\widetilde{G}_h\phi_h,\nabla(\chi-v)\big)
        - R(p_h^i,\phi_h,p^i,\phi,v) \notag \\
 &=\widetilde{A}(p_h^i,v,\chi)+\sum_{\tau\in\mathcal{T}^h}\int_{\tau}
        \big(\gamma(x,p_h^i)(\widetilde{G}_h\phi_h-\nabla\phi_h)\big)\cdot\nabla(v-\chi)
        +\sum_{\tau\in\mathcal{T}^h}\int_{\tau}
          \mbox{div}\big(\gamma(x,p_h^i)\widetilde{G}_h\phi_h\big)(v-\chi)
          \notag \\
 &\quad\quad\quad\quad\quad\quad  - R(p_h^i,\phi_h,p^i,\phi,v) \notag \\
 &=\sum_{\tau\in\mathcal{T}^h}\int_{\tau}
       D_h(p_h^i)\cdot\nabla(v-\chi)
      +\sum_{\tau\in\mathcal{T}^h}\int_{\tau}\big(R_h(p_h^i)
      +{\rm div}\big(\gamma(x,p_h^i)\widetilde{G}_h\phi_h\big)\big)(v-\chi) \notag \\
 &\quad      +\sum_{\tau\in\mathcal{T}^h}\int_{\tau}
        \big(\gamma(x,p_h^i)(\widetilde{G}_h\phi_h-\nabla\phi_h)\big)\cdot\nabla(v-\chi)
        - R(p_h^i,\phi_h,p^i,\phi,v).
 \end{align}
 Then taking $\chi=\pi_hv$ in \eqref{upper-p-3}, by Cl\'ement interpolation \eqref{PCle-interpolation1} and \eqref{PCle-interpolation3}, we get
 \begin{align} \label{upper-p-4}
  a'(p_h^i;p^i-p_h^i,v)
  &\le C \sum_{\tau\in\mathcal{T}^h}
       \Big( \|D_h(p_h^i)\|_{0,\tau}
        + \|\gamma(x,p_h^i)(\widetilde{G}_h\phi_h-\nabla\phi_h)\|_{0,\tau} \notag \\
  &\quad + h_\tau \|R_h(p_h^i)+{\rm div}
         \big(\gamma(x,p_h^i)\widetilde{G}_h\phi_h\big)\|_{0,\tau} \Big) \|v\|_{1,\omega_\tau}
        + \big|R(p_h^i,\phi_h,p^i,\phi,v)\big|.
 \end{align}

 Now it remains to estimate the remainder $R(p_h^i,\phi_h,p^i,\phi,v)$. From \eqref{Rerror2}, we have
 \begin{align} \label{R-ph-phih-p-phi}
 \big|R(p_h^i,\phi_h,p^i,\phi,v)\big|
 \le C \big(\|p_h^i-p^i\|_{1,3}\|p_h^i-p^i\|_{1} +\|\nabla(\phi-\phi_h)\|_0
        +\|p^i-p_h^i\|_{0}\big)\|v\|_{1}.
 \end{align}
 We turn to estimate $\|p_h^i-p^i\|_{1,3}$ on the right-hand side of \eqref{R-ph-phih-p-phi}.

 By the inverse estimate, \eqref{yin30} and \eqref{yin32}, it yields
 \begin{align}\label{ph-pi13}
  \|p_h^i-p^i\|_{1,3}
  &\le C \big(\|p_h^i-R_hp^i\|_{1,3}+\|R_hp^i-p^i\|_{1,3}\big) \notag \\
  &\le C \big(h^{2(1/3-1/2)}\|p_h^i-R_hp^i\|_{1}
        + h^{1+2(1/3-1/2)}\|p^i\|_{2}\big)\notag \\
  &\le C h^{\frac{2}{3}}.
 \end{align}
 Substituting \eqref{ph-pi13} into \eqref{R-ph-phih-p-phi}, we have
 \begin{align} \label{R-phphihpphi-1}
  \big|R(p_h^i,\phi_h,p^i,\phi,v)\big|
  \le C \big( h^{\frac{2}{3}}\|p_h^i-p^i\|_{1}
       + \|\nabla(\phi-\phi_h)\|_0+\|p^i-p_h^i\|_{0} \big)\|v\|_{1}.
 \end{align}
  By \eqref{upper-p-4}, \eqref{R-phphihpphi-1} and using Lemma \ref{lemma-w-sup}, it follows that
 \begin{align} \label{upper-p-5}
  \|p^i-p_h^i\|_{1,\Omega}
  & \le C \sum_{\tau\in\mathcal{T}^h}
       \Big( \|D_h(p_h^i)\|_{0,\tau}
        + \|\gamma(x,p_h^i)(\widetilde{G}_h\phi_h-\nabla\phi_h)\|_{0,\tau} \notag \\
  &\quad + h_\tau \|R_h(p_h^i)+{\rm div}
         \big(\gamma(x,p_h^i)\widetilde{G}_h\phi_h\big)\|_{0,\tau} \Big)  \notag\\
  &\quad + C \big( h^{\frac{2}{3}}\|p_h^i-p^i\|_{1}
         + \|\nabla(\phi-\phi_h)\|_0+\|p^i-p_h^i\|_{0} \big).
 \end{align}
  Hence, choosing $h$ sufficiently small such that $Ch^{\frac{2}{3}}\ll1$, then we obtain
 \begin{align} \label{upper-p-5-1}
  \|p^i-p_h^i\|_{1,\Omega}
  & \le C \sum_{\tau\in\mathcal{T}^h}
       \Big( \|D_h(p_h^i)\|_{0,\tau}
         + \|\gamma(x,p_h^i)(\widetilde{G}_h\phi_h-\nabla\phi_h)\|_{0,\tau}
         + h_\tau \|R_{2h}(p_h^i,\phi_h)\|_{0,\tau} \Big) \notag \\
  &\quad + C \big( \|\nabla(\phi-\phi_h)\|_{0,\Omega}+\|p^i-p_h^i\|_{0} \big),
 \end{align}
 where
 \begin{align*}
 R_{2h}(p_h^i,\phi_h)
 &= R_h(p_h^i)+{\rm div}\big(\gamma(x,p_h^i)\widetilde{G}_h\phi_h\big) \\
 &= {\rm div}(G_hp_h^i)+{\rm div}\big(\beta(x,p_h^i)\big)
    -g(x,p_h^i) +{\rm div}\big(\gamma(x,p_h^i)\widetilde{G}_h\phi_h\big).
\end{align*}

 Then the desired result \eqref{upper-p-0} is completed by using \eqref{phi-upper} and \eqref{upper-p-5-1}. This completes the
 proof of Theorem \ref{theo-upper-p}.  $\hfill\Box$
\end{proof}

\begin{remark}\label{Remark-upper}
 Note that $\|p^i-p_h^i\|_{0,\Omega}$ is usually higher-order term compared with $\|\nabla(\phi-\phi_h)\|_{0,\Omega}$ and $\|\nabla(p^i-p_h^i)\|_{0,\Omega}$.
 From \eqref{phi-upper} and \eqref{upper-p-0},
  if $\|p^i-p_h^i\|_{0,\Omega}\le Ch\|\nabla(p^i-p_h^i)\|_{0,\Omega}$,
  then
 \begin{align*} 
  \|\nabla(p^i-p_h^i)\|_{0,\Omega} \le C\eta_{p^i}(p_h^i,\phi_h),\\
  \|\nabla(\phi-\phi_h)\|_{0,\Omega} \le C \Big(\eta_{\phi}(p^i_{h},\phi_{h}) + h\eta_{p^i}(p_h^i,\phi_h)\Big).
 \end{align*}

  Up to now, there is no relevant work on the $L^2$ norm error estimate for both the steady-state PNP equations ( \eqref{MPNP-model} with $\alpha=\beta=1$, $\gamma=p^i,~g=f_i(x)$) and the nonlinear PNP equations \eqref{MPNP-model}. It is difficult to derive the $L^2$ norm error estimate for $p^i$ by using the traditional duality arguments for the steady-state PNP equations. Recently, we present an optimal $L^2$ norm error estimate of the finite element approximation $p_h^i$ in \cite{shen2019} for a time-dependent PNP equations, but the arguments used in \cite{shen2019} can not successfully applied to the steady-state model because of the difference between the steady-state and time-dependent PNP equations. Although there is no theoretical results on the $L^2$ norm error estimate for the steady-state PNP equations, numerical examples including PNP equations for practical biological problems solved on irregular meshes show that $\|p^i-p_h^i\|_{0,\Omega}\le Ch\|\nabla(p^i-p_h^i)\|_{0,\Omega}$ holds (see Figs. 2, 3 and Figs. 5, 6 in our work \cite{yanglu2013}, where Figs. 5, 6 presents the results for a practical biological problem).
\end{remark}

 In the next subsection, we will present the local lower bounds of the a posteriori error indicators for both the electrostatic potential and concentrations.

\subsection{Lower bound} \label{subsec-Lower}

\noindent
 Now, we study the lower bounds of the a posteriori error indicators for both the electrostatic potential and concentrations. We need further some assumptions for the coefficients in \eqref{MPNP-model}. Suppose that $\alpha(x,y)\in W^{2,\infty}(\Omega\times(-\lambda,\lambda))$, $\beta(x,y)\in\big(H^2(\Omega\times(-\lambda,\lambda))\big)^2$, $g(x,y)\in H^1(\Omega\times(-\lambda,\lambda))$, and $\epsilon(x)\in W^{2,\infty}(\omega_\tau)$, where $l\in \partial \mathcal{T}^h$, $l\not\subset \partial\Omega$ and $\omega_l$ is defined in \eqref{def-omega-zl-tau}. In addition, we assume that $\gamma(x,y)$ is a linear function with respect to the second variable $y$ (actually, $\gamma(x,p^i)=q^ip^i$ in the practical problems, where $q^i$ is a constant). We also need the assumption that there exist positive constants $\alpha_0$, $\alpha_1$, $\epsilon_0$ and $\epsilon_1$, such that
  \bea \label{boualp}\alpha_0 \le \alpha(x,y) \le \alpha_1
  \eea
  and
  \bea\label{bouepi}
  \epsilon_0 \le \epsilon(x) \le \epsilon_1.
  \eea

 Denote by $[g]_l$ the jump of $g$ across the surface $l\in \partial \mathcal{T}^h, l\not\subset\partial\Omega$, for example,
 \begin{equation} \label{alpha-jh}
 [\alpha(\cdot,v)\nabla v\cdot n_l]_l=\lim_{s\rightarrow 0^+}[\big(\alpha(\cdot,v)\nabla v\big)(x+sn_l)-\big(\alpha(\cdot,v)\nabla v\big)(x-sn_l)]\cdot n_l,
 \end{equation}
 and
 \begin{equation} \label{epsilon-jh}
  [\epsilon(x)\nabla v\cdot n_l]_l=\lim_{s\rightarrow 0^+}
  [\big(\epsilon(x)\nabla v\big)(x+sn_l)-\big(\epsilon(x)\nabla v\big)(x-sn_l)]\cdot n_l,
 \end{equation}
 where $n_l$ is the unit normal vector to $l$ and $v\in H_0^1(\Omega)$.

 Let $J_{h,l}(p_h^i)=[\alpha(x,p_h^i)\nabla p_h^i\cdot n_l]$ and $\widehat J_{h,l}(\phi_h)=[\epsilon(x)\nabla\phi_h\cdot n_l]$ represent the jumps of $p_h^i$ and $\phi_h$ across the surface $ l\in \partial \mathcal{T}^h$, $l\not\subset \partial\Omega$, respectively, where $p_h^i, ~ \phi_h \in S_0^h$. \\

 The following results will be used in our analysis for the lower bound.
\begin{lemma}\label{lem-lower}(cf. \cite{R.Ver1998,M.Ain2000})
 Let $\tau\in \mathcal{T}^h$ be a shape-regular mesh and $l\in \partial \mathcal{T}^h$. Then there exists $\mu_\tau:\mathcal{P}^1(\tau)\rightarrow H^1_{0}(\tau)$, such that for any $\upsilon\in \mathcal{P}^1(\tau)$, there hold
 \begin{align}\label{lem-lower-1}
  C^{-1}\|\mu_{\tau}\upsilon\|^2_{0,\tau} &\le \|\upsilon\|^2_{0,\tau} \le C(\upsilon,\mu_{\tau}\upsilon)_{\tau},
  \\  \label{lem-lower-2}
  \|\mu_{\tau}\upsilon\|_{1,\tau} &\le C h^{-1}_{\tau}\|\upsilon\|_{0,\tau}.
 \end{align}
 And there exists $\nu_{l}:\mathcal{P}^1(l)\rightarrow H^1_{0}(\omega_{l})$, such that for any $\upsilon\in \mathcal{P}^1(l)$, there hold
 \begin{align} \label{lem-lower-3}
  C^{-1}\|\nu_{l}\upsilon\|^2_{0,l} &\le \|\upsilon\|^2_{0,l} \le C(\upsilon,\nu_{l}\upsilon)_{l},
  \\ \label{lem-lower-4}
  \|\nu_{l}\upsilon\|_{0,\omega_{l}} &\le Ch^{\frac{1}{2}}_{\omega_{l}}\|\upsilon\|_{0,l},
  \\ \label{lem-lower-5}
  \|\nu_{l}\upsilon\|_{1,\omega_{l}} &\le C h^{-\frac{1}{2}}_{\omega_{l}}\|\upsilon\|_{0,l}.
 \end{align}
\end{lemma}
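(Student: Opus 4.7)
The plan is to construct the operators $\mu_\tau$ and $\nu_l$ explicitly via the classical bubble-function technique of Verf\"urth, and then verify \eqref{lem-lower-1}--\eqref{lem-lower-5} by a scaling argument to a fixed reference configuration. Let $\lambda_1^\tau, \lambda_2^\tau, \lambda_3^\tau$ denote the barycentric coordinates on the triangle $\tau$ and set the interior bubble $b_\tau := 27\lambda_1^\tau\lambda_2^\tau\lambda_3^\tau \in H_0^1(\tau)$, which is nonnegative, supported on $\tau$, vanishes on $\partial\tau$, and attains its maximum $1$ at the centroid. I would then define $\mu_\tau \upsilon := b_\tau \upsilon$ for $\upsilon \in \mathcal{P}^1(\tau)$, so that $\mu_\tau \upsilon \in H_0^1(\tau)$. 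The upper bound in \eqref{lem-lower-1} is immediate from $0 \le b_\tau \le 1$, while the lower bound follows from the observation that on the finite-dimensional space $\mathcal{P}^1(\hat\tau)$ for a reference triangle $\hat\tau$, the two expressions $\hat\upsilon \mapsto \|\hat\upsilon\|_{0,\hat\tau}^2$ and $\hat\upsilon \mapsto (\hat b_\tau\,\hat\upsilon,\hat\upsilon)_{\hat\tau}$ are both norms (the latter because $\hat b_\tau$ is strictly positive in the interior and polynomials are analytic), hence equivalent; the Jacobian factors introduced by the affine map $F_\tau:\hat\tau\to\tau$ cancel on the two sides of the inequality. For \eqref{lem-lower-2}, since $\mu_\tau\upsilon$ is a polynomial on $\tau$, one invokes the standard inverse inequality $|\mu_\tau\upsilon|_{1,\tau} \le Ch_\tau^{-1}\|\mu_\tau\upsilon\|_{0,\tau}$ followed by the already established $\|\mu_\tau\upsilon\|_{0,\tau}\le\|\upsilon\|_{0,\tau}$.

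For the edge operator, let $l\in\partial\mathcal{T}^h$ be an interior face shared by two elements $\tau_1,\tau_2$ with $\omega_l = \tau_1\cup\tau_2$, and let $\lambda_{l,1},\lambda_{l,2}$ be the barycentric coordinates on $\omega_l$ associated with the two endpoints of $l$. I would take the edge bubble $b_l := 4\lambda_{l,1}\lambda_{l,2}$, which lies in $H_0^1(\omega_l)$, is bounded by $1$, and is a strictly positive quadratic on $l$. Given $\upsilon \in \mathcal{P}^1(l)$, I extend $\upsilon$ to $\omega_l$ by constant prolongation along the direction orthogonal to $l$ (locally on each $\tau_i$), obtaining a linear function $E\upsilon$ with $(E\upsilon)|_l = \upsilon$, and define $\nu_l\upsilon := b_l\,E\upsilon$. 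Inequality \eqref{lem-lower-3} follows as in the element case, now using equivalence of the seminorms $\|\hat\upsilon\|_{0,\hat l}^2$ and $(\hat b_l\,\hat\upsilon,\hat\upsilon)_{\hat l}$ on $\mathcal{P}^1(\hat l)$. For \eqref{lem-lower-4} one pulls back to the reference patch, uses $|\omega_l| \sim h_{\omega_l}^2$ and $|l|\sim h_{\omega_l}$, and exploits that on the finite-dimensional space of extended polynomials the $L^2$ norm over $\hat\omega_l$ is controlled by the $L^2$ norm over $\hat l$; the scaling factors $h_{\omega_l}^2$ and $h_{\omega_l}$ combine to produce the claimed $h_{\omega_l}^{1/2}$. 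Finally, \eqref{lem-lower-5} is obtained from the $H^1$ inverse estimate applied to the polynomial $\nu_l\upsilon$ together with \eqref{lem-lower-4}, yielding the extra factor $h_{\omega_l}^{-1}$.

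The only non-routine ingredient is the construction of the extension $E$ in the edge case: one must ensure that $E\upsilon$ remains a polynomial of controlled degree on each of the two adjacent elements so that the reference-configuration argument on $\hat\omega_l$ produces matching $h_{\omega_l}$-powers, and that the transverse prolongation is compatible across the two elements through the common edge $l$. Beyond this verification, everything reduces to equivalence of norms on a fixed finite-dimensional polynomial space on the reference patch together with standard affine scaling, exactly as carried out in Verf\"urth and Ainsworth--Oden.
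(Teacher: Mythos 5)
Your construction is the standard bubble-function argument of Verf\"urth and Ainsworth--Oden, which is precisely what the paper relies on: the lemma is stated with the citation ``(cf.\ \cite{R.Ver1998,M.Ain2000})'' and no proof is given in the text. Your choices $\mu_\tau\upsilon=b_\tau\upsilon$ and $\nu_l\upsilon=b_l\,E\upsilon$, together with norm equivalence on the reference configuration, affine scaling, and inverse estimates, correctly reproduce all five inequalities, so the proposal is sound and matches the intended (cited) proof.
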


 From \eqref{upphi-1} and Lemma \ref{lem-lower}, we get the following result.
\begin{lemma}\label{lem-low-phi}
 Let $(p^i, \phi)$ and $(p_h^i, \phi_h)$ be the solutions of \eqref{weak-p}-\eqref{weak-phi} and \eqref{fem-ph}-\eqref{fem-phih}, respectively. For any $l\in \partial \mathcal{T}^h$, $l\not\subset\partial\Omega$, if $h_l\ll 1$, then there holds
 \begin{equation}\label{lem-low-phi-0}
  h^{\frac 1 2}_{\omega_{l}}\|\widehat J_{h,l}(\phi_{h})\|_{0,l}
  \le C\Big( \|\nabla(\phi-\phi_h)\|_{0,\omega_l} + h_{\omega_l}\sum\limits_{i=1}^n\|p^i-p_h^i\|_{0,\omega_l}\Big)
       +r_{\phi_h,\omega_l},
 \end{equation}
 where $\widehat J_{h,l}(\phi_h)=[\epsilon\nabla\phi_h\cdot n_l]$, and  $r_{\phi_h,\omega_l}\le Ch^2_{\omega_l}\big( |f|_{1,\omega_l}+\|\epsilon\|_{2,\infty,\omega_l}\|\phi_h\|_{1,\omega_l}  \big)$.
\end{lemma}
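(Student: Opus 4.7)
My plan is to follow the classical edge--bubble argument of Verf\"urth, adapted to the PNP setting and starting from identity \eqref{upphi-1}. Fix an interior edge $l$ and let $w = \nu_l \widehat{J}_{h,l}(\phi_h)$, extended by zero outside $\omega_l$, so that $w \in H^1_0(\omega_l)\subset H^1_0(\Omega)$. (Strictly, since $\widehat{J}_{h,l}(\phi_h)$ need not lie in $\mathcal{P}^1(l)$ when $\epsilon$ is non--constant, one first replaces it by its $L^2(l)$ projection onto $\mathcal{P}^1(l)$ and absorbs the resulting higher--order oscillation into $r_{\phi_h,\omega_l}$; I suppress this standard technicality below.) Lemma \ref{lem-lower}, specifically \eqref{lem-lower-3}, then yields $\|\widehat{J}_{h,l}(\phi_h)\|_{0,l}^2 \le C \int_l \widehat{J}_{h,l}(\phi_h)\, w$.

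Next I specialize \eqref{upphi-1} to this $w$ with $\chi=0$ and integrate by parts element by element in $\widetilde{a}(\phi_h,w)$. Because $\phi_h$ is linear on each $\tau$, ${\rm div}(\epsilon\nabla\phi_h)=\nabla\epsilon\cdot\nabla\phi_h$, and the inter--element boundary contributions collapse onto the jump across $l$. This produces
\begin{equation*}
\int_l \widehat{J}_{h,l}(\phi_h)\, w = -\widetilde{a}(\phi-\phi_h,w) + \Bigl(\sum_{i=1}^n q^i(p^i-p_h^i),\, w\Bigr) + \sum_{\tau \subset \omega_l}(R_\tau,w)_\tau,
\end{equation*}
where $R_\tau := \sum_{i=1}^n q^i p_h^i + f + \nabla\epsilon\cdot\nabla\phi_h$ is the interior Poisson residual. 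The first two summands are handled immediately by Cauchy--Schwarz together with the bubble scalings $\|w\|_{0,\omega_l}\le C h^{1/2}_{\omega_l}\|\widehat{J}_{h,l}(\phi_h)\|_{0,l}$ and $\|\nabla w\|_{0,\omega_l}\le C h^{-1/2}_{\omega_l}\|\widehat{J}_{h,l}(\phi_h)\|_{0,l}$ of Lemma \ref{lem-lower}; after the final multiplication by $h^{1/2}_{\omega_l}$ they will deliver exactly the $\|\nabla(\phi-\phi_h)\|_{0,\omega_l}$ and $h_{\omega_l}\sum_i\|p^i-p_h^i\|_{0,\omega_l}$ pieces of \eqref{lem-low-phi-0}.

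The main obstacle is the residual sum $\sum_\tau(R_\tau,w)_\tau$, since $\|R_\tau\|_{0,\tau}$ is not intrinsically small. I will control it via an auxiliary element--bubble estimate: let $\widetilde{R}_\tau$ denote the $L^2(\tau)$ projection of $R_\tau$ onto $\mathcal{P}^1(\tau)$ (this preserves the linear part $\sum_i q^i p_h^i$ exactly), set $v_\tau = \mu_\tau \widetilde{R}_\tau$ extended by zero, and test the strong form of the Poisson equation on $\tau$ against $v_\tau$. Because $v_\tau$ is supported in the single element $\tau$, the inter--element jump contributions vanish, and one obtains $(R_\tau,v_\tau)_\tau = \widetilde{a}(\phi-\phi_h,v_\tau)_\tau - (\sum_i q^i(p^i-p_h^i),v_\tau)_\tau$. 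Invoking \eqref{lem-lower-1} and \eqref{lem-lower-2} then yields
\begin{equation*}
h_\tau\|R_\tau\|_{0,\tau} \le C\Bigl(\|\nabla(\phi-\phi_h)\|_{0,\tau} + h_\tau\sum_{i=1}^n\|p^i-p_h^i\|_{0,\tau} + h_\tau^{2}\bigl(|f|_{1,\tau}+\|\epsilon\|_{2,\infty,\tau}\|\phi_h\|_{1,\tau}\bigr)\Bigr),
\end{equation*}
where the third term comes from $\|R_\tau-\widetilde{R}_\tau\|_{0,\tau}\le C h_\tau(|f|_{1,\tau}+\|\epsilon\|_{2,\infty,\tau}\|\phi_h\|_{1,\tau})$, itself exploiting that $\nabla\phi_h$ is constant on $\tau$ so $|\nabla\epsilon\cdot\nabla\phi_h|_{1,\tau}\le \|\epsilon\|_{2,\infty,\tau}\|\nabla\phi_h\|_{0,\tau}$.

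To conclude, I insert this residual bound into $\sum_\tau|(R_\tau,w)_\tau|\le \sum_\tau\|R_\tau\|_{0,\tau}\|w\|_{0,\tau}$ using $\|w\|_{0,\tau}\le C h^{1/2}_{\omega_l}\|\widehat{J}_{h,l}(\phi_h)\|_{0,l}$, add the three estimates, divide by $\|\widehat{J}_{h,l}(\phi_h)\|_{0,l}$, and multiply by $h^{1/2}_{\omega_l}$, using shape regularity $h_\tau\sim h_{\omega_l}$. The outcome is exactly \eqref{lem-low-phi-0}, with $r_{\phi_h,\omega_l}$ absorbing both the residual--oscillation term just produced and the jump--polynomial--projection oscillation from the first step. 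The hypothesis $h_l\ll 1$ is used only to keep the constants in the polynomial approximation and element--bubble inequalities in their asymptotic regime.
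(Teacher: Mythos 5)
Your proposal is correct and follows essentially the same route as the paper: take $\chi=0$ in \eqref{upphi-1}, test the resulting identity with the edge bubble applied to a piecewise-linear surrogate of the jump, and control the interior residual by an element-bubble argument on a $\mathcal{P}^1$ approximation, with the data-oscillation terms collected in $r_{\phi_h,\omega_l}$. The only imprecision is bookkeeping: the oscillation from replacing $\widehat J_{h,l}(\phi_h)$ by a polynomial on $l$ is of size $Ch_l\|\widehat J_{h,l}(\phi_h)\|_{0,l}$ and so cannot be placed in $r_{\phi_h,\omega_l}$ (which depends only on $f$, $\epsilon$ and $\phi_h$); as in the paper, it must be absorbed into the left-hand side, which is precisely what the hypothesis $h_l\ll 1$ is for.
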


\begin{proof}
 For any $w\in H_{0}^1(\Omega)$, $\chi\in S_0^h$, $l\in \partial \mathcal{T}^h$ and $l\not\subset\partial\Omega$, from \eqref{upphi-1} and Green's formula, we get
 \begin{align}
  \widetilde{a}(\phi-\phi_{h},w)
  &= \big(\sum_{i=1}^nq^ip_h^i+f,w-\chi\big) + \big(\sum_{i=1}^nq^i(p^i-p^i_h),w\big)
      - \widetilde{a}(\phi_{h},w-\chi)  \notag \\
  &= \big(\sum_{i=1}^nq^ip_h^i+f,w-\chi\big) + \big(\sum_{i=1}^nq^i(p^i-p^i_h),w\big)
      - \int_\Omega \epsilon\nabla\phi_h\cdot\nabla(w-\chi) \notag \\
  &= \sum_{\tau}\int_{\tau}\widehat R_{1h}(p^{i}_{h},\phi_{h})(w-\chi)
          + \int_\Omega\Big(\sum_{i=1}^nq^i(p^i-p^i_h)\Big)w
          - \sum_{l\in\partial \mathcal{T}^h,l\not\subset \partial\Omega}\int_l \widehat J_{h,l}(\phi_h)(w-\chi),
    \label{upphi-000}
 \end{align}
 where
 \begin{align*}
  &\widehat R_{1h}(p^{i}_{h},\phi_{h})=
    \sum\limits_{i=1}^nq^ip_h^i
          +{\rm div}(\epsilon\nabla\phi_h) + f , ~~~
   \widehat J_{h,l}(\phi_h)
    =[\epsilon \nabla\phi_{h}\cdot n_{l}].
 \end{align*}
 Taking $\chi=0$ in \eqref{upphi-000}, for any $w\in H^1_{0}(\omega_l)$, we have
 \begin{align}
 \big(\widehat J_{h,l}(\phi_{h}),w\big)_l&=\sum_{l\in\partial \mathcal{T}^h,l\not\subset \partial\Omega}\int_l \widehat J_{h,l}(\phi_h) w \notag\\
 &=\big(\widehat R_{1h}(p^{i}_{h},\phi_{h}),w\big)_{\omega_l} - \widetilde{a}(\phi-\phi_{h},w)_{\omega_l}
        +\big(\sum_{i=1}^nq^i(p^i-p^i_h),w\big)_{\omega_l} \label{lem-low-phi-2-1} \\
 &\le \|\widehat R_{1h}(p^{i}_{h},\phi_{h})\|_{0,\omega_l}\|w\|_{0,\omega_l}
     + C(\|\nabla(\phi-\phi_h)\|_{0,\omega_l}\|w\|_{1,\omega_l} + \sum\limits_{i=1}^n\|p^i-p_h^i\|_{0,\omega_l}\|w\|_{0,\omega_l}).
     \label{lem-low-phi-2}
 \end{align}
 In the following, in order to estimate $\|\widehat J_{h,l}(\phi_{h})\|$, we need introduce an approximation to $\widehat{J}_{h,l}(\phi_{h})$ defined by
 $$\bar{J}_{h,l}(\phi_{h}) = [\bar{\epsilon}\nabla\phi_{h,l}\cdot n_l],$$
 where $\bar{\epsilon}(x)\in S^h$ is a linear interpolation of $\epsilon(x)$ satisfying (cf. \cite{ve94})
 \begin{align}\label{yyeps}
  \|\epsilon(x) -\bar{\epsilon}(x)\|_{0,\infty,l} \le Ch_l\|\epsilon(x)\|_{1,\infty,\omega_l}.
 \end{align}

 For any $w\in L^2(l)$, from (\ref{bouepi}) and \eqref {yyeps}, we have
 \begin{align}
 &\big(\widehat J_{h,l}(\phi_{h})-\bar J_{h,l}(\phi_{h}),w\big)_l
  = \int_l[(\epsilon - \bar\epsilon)\nabla\phi_h\cdot n_l]w \notag\\
              & \leq C \|\epsilon - \bar\epsilon\|_{0,\infty,l}~\|[\nabla\phi_h\cdot n_l]\|_{0,l}\|w\|_{0,l}\notag\\
              & \leq C h_l \|\widehat{J}_{h,l}(\phi_h)\|_{0,l}\|w\|_{0,l}.
 \label{yy-0}
 \end{align}
 Taking $w=\widehat J_{h,l}(\phi_{h})-\bar J_{h,l}(\phi_{h})$ in the above inequality, it yields
 \begin{align}\label{yy-00}
  \|\widehat J_{h,l}(\phi_{h}) - \bar J_{h,l}(\phi_{h})\|_{0,l}
  \le C h_{l}\|\hat{J}_{h,l}(\phi_h)\|_{0,l}.
 \end{align}
 On the other hand, from \eqref{lem-low-phi-2} and \eqref{yy-0}, we get
 \begin{align*}
 \big(\bar J_{h,l}(\phi_{h}),w\big)_l
 &= \big(\bar J_{h,l}(\phi_{h})-\widehat J_{h,l}(\phi_{h}),w\big)_l
     + \big(\widehat J_{h,l}(\phi_{h}),w\big)_l \notag\\
 &\le C \Big(h_l \|\widehat{J}_{h,l}(\phi_h)\|_{0,l}\|w\|_{0,l}+
 \|\widehat R_{1h}(p^{i}_{h},\phi_{h})\|_{0,\omega_l}\|w\|_{0,\omega_l}
      \notag\\
 &+ \|\nabla(\phi-\phi_h)\|_{0,\omega_l}\|w\|_{1,\omega_l} +\sum\limits_{i=1}^n\|p^i-p_h^i\|_{0,\omega_l}\|w\|_{0,\omega_l}\Big).
 \end{align*}
 Similarly, taking $w=\nu_l\bar J_{h,l}(\phi_{h})$  in the above formula and then by Lemma \ref{lem-lower}, we have
 \begin{align*}
  \|\bar J_{h,l}(\phi_h)\|_{0,l}^2
  &\le  C\big(\bar J_{h,l}(\phi_h),\nu_l\bar J_{h,l}(\phi_h)\big)_{l} \\
  & \le C\Big(h^{\frac 1 2}_{\omega_{l}}
    \|\widehat R_{1h}(p^{i}_{h},\phi_{h})\|_{0,\omega_l}
     + h^{-\frac 1 2}_{\omega_{l}}\|\nabla(\phi-\phi_h)\|_{0,\omega_l}
     + h^{\frac 1 2}_{\omega_{l}}\sum\limits_{i=1}^n
       \|p^i-p_h^i\|_{0,\omega_l} \notag\\
  &+ h_l \|\hat{J}_{h,l}(\phi_h)\|_{0,l} \Big)
       \|\bar J_{h,l}(\phi_h)\|_{0,l}.
 \end{align*}
 Hence, by using \eqref{yy-00}, it yields
 \begin{align*}
  &h^{\frac 1 2}_{\omega_{l}}\|\widehat J_{h,l}(\phi_{h})\|_{0,l}
  \le  h_{\omega_l}^{\frac 1 2} \big( \|\bar J_{h,l}(\phi_{h})\|_{0,l} + \|\widehat J_{h,l}(\phi_{h})-\bar J_{h,l}(\phi_{h})\|_{0,l} \big) \notag\\
  &\le C\Big( h_{\omega_l}\|\widehat R_{1h}(p^{i}_{h},\phi_{h})\|_{0,\omega_l}
       + \|\nabla(\phi-\phi_h)\|_{0,\omega_l}
       + h_{\omega_l}\sum\limits_{i=1}^n\|p^i-p_h^i\|_{0,\omega_l} +h_l^{\frac 3 2} \|\widehat{J}_{h,l}(\phi_h)\|_{0,l}\Big).
 \end{align*}
Hence, choosing $h_l$ sufficiently small such that $Ch_l\ll1$, then we obtain
\begin{align}\label{lem-low-phi-3}
  h^{\frac 1 2}_{\omega_{l}}\|\widehat J_{h,l}(\phi_{h})\|_{0,l}
  \le C\Big( h_{\omega_l}\|\widehat R_{1h}(p^{i}_{h},\phi_{h})\|_{0,\omega_l}
       + \|\nabla(\phi-\phi_h)\|_{0,\omega_l}
       + h_{\omega_l}\sum\limits_{i=1}^n\|p^i-p_h^i\|_{0,\omega_l} \Big).
 \end{align}
 Now we turn to estimate $\|\widehat R_{1h}(p^{i}_{h},\phi_{h})\|_{0,\omega_l}$. Define
 \begin{align*}
 \widetilde{M}_h(p^{i}_{h},\phi_{h})|_\tau
 = (\sum\limits_{i=1}^nq^ip_h^i)|_\tau
   + \frac {1}{|\tau|}\int_{\tau}{\rm div}(\epsilon(x)\nabla\phi_h)
   + \frac {1}{|\tau|}\int_{\tau}f.
 \end{align*}
 Since $q^i$, $i=1,2,\cdots,n$ are constants, it is easy to see that $\widetilde{M}_h(p^{i}_{h},\phi_{h})\in \mathcal{P}^1(\tau)$ and
 \begin{align}\label{lem-low-phi-4}
  \|\widehat R_{1h}(p^{i}_{h},\phi_{h}) - \widetilde{M}_h(p^{i}_{h},\phi_{h})\|_{0,\tau}
  \le C h_{\tau}\big( \|\epsilon\|_{2,\infty,\tau}\|\phi_h\|_{1,\tau}+|f|_{1,\tau} \big).
 \end{align}
 In addition, for any $ w\in H^1_{0}(\tau)$, by \eqref{lem-low-phi-2-1}, we obtain
 \begin{align} \label{lem-low-phi-5}
  \big(\widetilde{M}_h(p^{i}_{h},\phi_{h}),w\big)_{\tau}
  = \widetilde{a}(\phi-\phi_{h},w)_{\tau}
    -\big(\widehat R_{1h}(p^{i}_{h},\phi_{h}) - \widetilde{M}_h(p^{i}_{h},\phi_{h}),w\big)_{\tau}
    - \big(\sum_{i=1}^nq^i(p^i-p^i_h),w\big)_{\tau},
 \end{align}
 where we have used $\big(\widehat J_{h,l}(\phi_h),w\big)_l=0$, $l\in \partial\tau$ in \eqref{lem-low-phi-5}, for any $w\in H^1_{0}(\tau)$.
 Taking $w=\mu_{\tau}\widetilde{M}_h(p^{i}_{h},\phi_{h})$
 in \eqref{lem-low-phi-5} and by Lemma \ref{lem-lower}, there holds
 \begin{align*}
  &\|\widetilde{M}_h(p^{i}_{h},\phi_{h})\|_{0,\tau}^2
   \le C\big(\widetilde{M}_h(p^{i}_{h},\phi_{h}),
           \mu_{\tau}\widetilde{M}_h(p^{i}_{h},\phi_{h})\big)_\tau \notag\\
  &\le C\Big( \|\nabla(\phi-\phi_h)\|_{0,\tau}\|\mu_{\tau}
            \widetilde{M}_h(p^{i}_{h},\phi_{h})\|_{1,\tau}
        + \|\widehat R_{1h}(p^{i}_{h},\phi_{h})
            - \widetilde{M}_h(p_h^i,\phi_h)\|_{0,\tau}
         \|\mu_{\tau}\widetilde{M}_h(p^{i}_{h},\phi_{h})\|_{0,\tau} \notag\\
  &\quad +\sum_{i=1}^n\|p^i-p^i_h\|_{0,\tau}
          \|\mu_{\tau}\widetilde{M}_h(p^{i}_{h},\phi_{h})\|_{0,\tau} \Big) \notag\\
  &\le C \Big( h_\tau^{-1}\|\nabla(\phi-\phi_h)\|_{0,\tau}
         + \|\widehat R_{1h}(p^{i}_{h},\phi_{h})
            - \widetilde{M}_h(p_h^i,\phi_h)\|_{0,\tau}
         + \sum_{i=1}^n\|p^i-p^i_h\|_{0,\tau} \Big)
            \|\widetilde{M}_h(p^{i}_{h},\phi_{h})\|_{0,\tau}.
 \end{align*}
 Thus, we get
 \begin{align}\label{lem-low-phi-6-0}
  \|\widetilde{M}_h(p^{i}_{h},\phi_{h})\|_{0,\tau}
  \le C \Big(  h_\tau^{-1} \|\nabla(\phi-\phi_h)\|_{0,\tau}
       +\|\widehat R_{1h}(p^{i}_{h},\phi_{h})-\widetilde{M}_h(\phi_h)\|_{0,\tau}
       + \sum_{i=1}^n\|p^i-p^i_h\|_{0,\tau}\Big).
 \end{align}
 Combining \eqref{lem-low-phi-4} and \eqref{lem-low-phi-6-0}, it yields
 \begin{align}\label{lem-low-phi-7}
  \|\widehat R_{1h}(p^{i}_{h},\phi_{h})\|_{0,\tau}
  &\le\|\widetilde{M}_h(p_h^i,\phi_h)\|_{0,\tau}
    +\|\widehat R_{1h}(p^{i}_{h},\phi_{h})-\widetilde{M}_h(p_h^i,\phi_h)\|_{0,\tau} \notag\\
  & \le C\Big( h_\tau^{-1}\|\nabla(\phi-\phi_h)\|_{0,\tau}
    + h_{\tau}\big( \|\epsilon\|_{2,\infty,\tau}\|\phi_h\|_{1,\tau} + |f|_{1,\tau} \big)
    + \sum_{i=1}^n\|p^i-p^i_h\|_{0,\tau} \Big).
 \end{align}
 Substituting \eqref{lem-low-phi-7} into \eqref{lem-low-phi-3}, we obtain
 \begin{align*}
  h^{\frac 1 2}_{\omega_{l}}\|\widehat J_{h,l}(\phi_{h})\|_{0,l}
  \le C\Big( \|\nabla(\phi-\phi_h)\|_{0,\omega_l}
    + h_{\omega_l}\sum\limits_{i=1}^n\|p^i-p_h^i\|_{0,\omega_l} \Big)
    + r_{\phi_h,\omega_l}.
 \end{align*}
 where $ r_{\phi_h,\omega_l}
 \le Ch^2_{\omega_l} \big( |f|_{1,\omega_l}+ \|\phi_h\|_{1,\omega_l}\|\epsilon\|_{2,\infty,\omega_l}\big)$.

 This completes the proof of Lemma \ref{lem-low-phi}. $\hfill\Box$
\end{proof}

 Applying the above results, we have the following lower bound for $\|\nabla(\phi-\phi_h)\|_{0,\omega_\tau}$.
 \begin{theorem}\label{th-phi-lower}
   Let $(p^i, \phi)$ and $(p_h^i, \phi_h)$ be the solutions of \eqref{weak-p}-\eqref{weak-phi} and \eqref{fem-ph}-\eqref{fem-phih}, respectively. For any $ \tau\in \mathcal{T}^h$, there holds
 \begin{align} \label{th-phi-lower-0}	
   \eta_{\tau,\phi}(p^i_{h},\phi_{h})
   \le C \xi_{h, \omega_{\tau}}
     \Big( \|\nabla(\phi-\phi_h)\|_{0,\omega_\tau}
      + h_{\omega_\tau}\sum\limits_{i=1}^n\|p^i-p_h^i\|_{0,\omega_\tau} \Big)
      + \widetilde{r}_{\phi_h,\omega_\tau} ,
 \end{align}
 where
 \begin{align*}
  &\eta_{\tau,\phi}(p^i_{h},\phi_{h})
    =  h_\tau\|R_{1h}(p^i_{h},\phi_{h})\|_{0,\tau}
           + \|D_{h}(\phi_{h})\|_{0,\tau}, \\
  & R_{1h}(p_h^i,\phi_h)=\sum_{i=1}^nq^ip_h^i+{\rm div}(G_h\phi_h)+f,
    ~~D_h(\phi_h)= G_{h}\phi_{h} - \epsilon(x)\nabla\phi_{h}, \\
  &\widetilde{r}_{\phi_h,\omega_\tau}
   \le C \xi_{h, \omega_{\tau}} h_{\omega_{\tau}}^{2}
            \big( \|\epsilon\|_{2,\infty,\omega_\tau}\|\phi_h\|_{1,\omega_\tau} +|f|_{1,\omega_\tau} \big), \\
  &\xi_{h, \omega_{\tau}}=\max _{l \in \partial \mathcal{T}^{h}, l \subset \omega_{\tau} \backslash \partial \omega_{\tau}} \xi_{h, l}, ~~
   \xi_{h, l}=1+h_{l}|\epsilon|_{1, \infty, l}\left\|\epsilon^{-1}\right\|_{0, \infty, l}.
 \end{align*}
\end{theorem}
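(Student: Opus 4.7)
The plan is to prove \eqref{th-phi-lower-0} by separately controlling the two components of $\eta_{\tau,\phi}(p_h^i,\phi_h)$, namely $\|D_h(\phi_h)\|_{0,\tau}$ and $h_\tau\|R_{1h}(p_h^i,\phi_h)\|_{0,\tau}$, in terms of the edge jumps $\widehat J_{h,l}(\phi_h)$, and then invoking Lemma \ref{lem-low-phi} edge by edge to convert those jumps into the desired error quantities on the patch $\omega_\tau$.

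The first and most delicate step is to establish
\[
\|D_h(\phi_h)\|_{0,\tau} \,\le\, C\,\xi_{h,\omega_\tau}\!\!\sum_{l\in\partial\mathcal{T}^h,\,l\subset\omega_\tau\setminus\partial\omega_\tau} h_l^{1/2}\|\widehat J_{h,l}(\phi_h)\|_{0,l}.
\]
Because $G_h\phi_h$ is piecewise linear and, in this Poisson setting, $\alpha(x,v)\equiv\epsilon(x)$ is independent of $v$, the definition \eqref{Gh-operator-alpha} yields $(G_h\phi_h)(z)=\sum_{j=1}^{J_z}\alpha_z^j(\epsilon\nabla\phi_h)|_{\tau_z^j}(z)$ at every vertex $z\in\bar\tau$. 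Using $\sum_j\alpha_z^j=1$, I would rewrite $(G_h\phi_h-\epsilon\nabla\phi_h|_\tau)(z)=\sum_j\alpha_z^j\bigl[(\epsilon\nabla\phi_h)|_{\tau_z^j}(z)-(\epsilon\nabla\phi_h)|_\tau(z)\bigr]$ and expand each bracketed difference as a telescoping sum of cross-edge jumps along a chain of elements in $\omega_z$ connecting $\tau_z^j$ and $\tau$. Since $\phi_h$ is globally continuous, the tangential trace of $\nabla\phi_h$ has no jump and only the normal component $[\nabla\phi_h\cdot n_l]$ survives. Writing $[\epsilon\nabla\phi_h\cdot n_l]=\epsilon(x)[\nabla\phi_h\cdot n_l]$ along $l$ and inverting through $\epsilon^{-1}$, while accounting for the Lipschitz variation of $\epsilon$ on $l$ by a first-order Taylor-type correction of size $h_l|\epsilon|_{1,\infty,l}\|\epsilon^{-1}\|_{0,\infty,l}$, produces exactly the factor $\xi_{h,l}=1+h_l|\epsilon|_{1,\infty,l}\|\epsilon^{-1}\|_{0,\infty,l}$. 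Standard nodal-basis scaling with $\|\varphi_z\|_{0,\tau}\sim h_\tau$ and the shape-regularity of $\mathcal{T}^h$ then give the displayed bound.

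The second step treats $h_\tau\|R_{1h}(p_h^i,\phi_h)\|_{0,\tau}$ through the decomposition $R_{1h}(p_h^i,\phi_h)-\widehat R_{1h}(p_h^i,\phi_h)={\rm div}\,D_h(\phi_h)$. The term $\widehat R_{1h}$ is controlled directly by \eqref{lem-low-phi-7}. For the divergence piece, the piecewise linearity of $G_h\phi_h$ makes ${\rm div}\,G_h\phi_h|_\tau$ constant, so the inverse inequality $h_\tau\|{\rm div}\,G_h\phi_h\|_{0,\tau}\le C\|G_h\phi_h-c\|_{0,\tau}$ holds for any constant vector $c$; choosing $c=\epsilon(x_\tau)\nabla\phi_h|_\tau$ for a fixed $x_\tau\in\bar\tau$ bounds it by $\|D_h(\phi_h)\|_{0,\tau}+Ch_\tau|\epsilon|_{1,\infty,\tau}\|\nabla\phi_h\|_{0,\tau}$. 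The remaining piece ${\rm div}(\epsilon\nabla\phi_h)|_\tau=\nabla\epsilon\cdot\nabla\phi_h|_\tau$ is estimated in the same way. The resulting geometric data terms are absorbed into $\widetilde r_{\phi_h,\omega_\tau}$ by trading $\|\nabla\phi_h\|_{0,\tau}$ against $\|\epsilon^{-1}\|_{0,\infty,\tau}\|\epsilon\nabla\phi_h\|_{0,\tau}$ and using $\epsilon\in W^{2,\infty}(\omega_\tau)$ and $f\in H^1(\Omega)$.

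Combining the two steps and then applying Lemma \ref{lem-low-phi} to each factor $h_l^{1/2}\|\widehat J_{h,l}(\phi_h)\|_{0,l}$ converts it to $C(\|\nabla(\phi-\phi_h)\|_{0,\omega_l}+h_{\omega_l}\sum_i\|p^i-p_h^i\|_{0,\omega_l})+r_{\phi_h,\omega_l}$; a finite sum over the edges contained in $\omega_\tau$ together with the definition of $\xi_{h,\omega_\tau}$ yields \eqref{th-phi-lower-0}, with the collected $r_{\phi_h,\omega_l}$ contributions aggregated into $\widetilde r_{\phi_h,\omega_\tau}$. The principal obstacle is the first step: the careful local algebra required to express $D_h(\phi_h)$ as a weighted sum of normal-flux jumps, and in particular to track where $\epsilon$ is evaluated (at vertices inside $G_h$ versus along edges inside $\widehat J_{h,l}$), is what produces the $\xi_{h,l}$ dependence and is the crux of this recovery-type lower bound.
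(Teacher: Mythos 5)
Your overall architecture is the right one, and your first step is essentially the paper's argument: the paper also expands $D_h(\phi_h)=G_h\phi_h-\epsilon\nabla\phi_h$ through the nodal basis, telescopes the vertex differences into normal-flux jumps along chains of elements in $\omega_z$, converts the vertex-evaluated coefficient $\epsilon(z)$ into the edge-evaluated one at the cost of the factor $\xi_{h,l}=1+h_l|\epsilon|_{1,\infty,l}\|\epsilon^{-1}\|_{0,\infty,l}$, and then invokes Lemma \ref{lem-low-phi} edge by edge (see \eqref{lower-Ghphi0}--\eqref{lower-phi03}); your displayed bound for $\|D_h(\phi_h)\|_{0,\tau}$ just omits the Lagrange-interpolation error of $\epsilon$, the term $Ch_\tau^2|\epsilon|_{2,\infty,\tau}|\phi_h|_{1,\tau}$ in \eqref{lower-phi02}, which is harmless since it lands in $\widetilde r_{\phi_h,\omega_\tau}$. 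Your second step, however, departs from the paper and, as written, does not deliver the claimed remainder. The paper never estimates ${\rm div}(G_h\phi_h)$ or ${\rm div}(\epsilon\nabla\phi_h)$ separately: it keeps ${\rm div}(G_h\phi_h)$ inside the piecewise-linear surrogate $\widetilde R_{1h}$, tests the residual identity \eqref{th-phi-lower-2} with the bubble $\mu_\tau\widetilde R_{1h}$, and the $\|D_h(\phi_h)\|_{0,\tau}$ contribution enters paired with $\nabla w$, so after the inverse estimate and the final multiplication by $h_\tau$ no extra data term survives (see \eqref{th-phi-lower-4-0}--\eqref{th-phi-lower-4-1}).

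The concrete problem with your route is the choice of the \emph{constant} $c=\epsilon(x_\tau)\nabla\phi_h|_\tau$. You get
\begin{align*}
h_\tau\|{\rm div}(G_h\phi_h)\|_{0,\tau}\le C\|G_h\phi_h-c\|_{0,\tau}
\le C\big(\|D_h(\phi_h)\|_{0,\tau}+h_\tau|\epsilon|_{1,\infty,\tau}\|\nabla\phi_h\|_{0,\tau}\big),
\end{align*}
and the same first-order term reappears from $h_\tau\|\nabla\epsilon\cdot\nabla\phi_h\|_{0,\tau}$. The leftover $Ch_\tau|\epsilon|_{1,\infty,\tau}\|\nabla\phi_h\|_{0,\tau}$ is one power of $h$ larger than the stated $\widetilde r_{\phi_h,\omega_\tau}\le C\xi_{h,\omega_\tau}h_{\omega_\tau}^2\big(\|\epsilon\|_{2,\infty,\omega_\tau}\|\phi_h\|_{1,\omega_\tau}+|f|_{1,\omega_\tau}\big)$; no amount of "trading $\|\nabla\phi_h\|_{0,\tau}$ against $\|\epsilon^{-1}\|_{0,\infty}\|\epsilon\nabla\phi_h\|_{0,\tau}$" changes its order. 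Worse, since locally $\|\nabla(\phi-\phi_h)\|_{0,\tau}$ and $h_\tau\|\nabla\phi_h\|_{0,\tau}$ are both generically $O(h_\tau^2)$, this term is of the same order as the error you are trying to dominate, so the resulting "lower bound" loses its efficiency content whenever $|\epsilon|_{1,\infty}\neq 0$. The fix within your framework is to subtract the \emph{linear} Lagrange interpolant $I_h\epsilon\cdot\nabla\phi_h|_\tau$ instead of a constant: since $\nabla\phi_h$ is constant on $\tau$ and $\epsilon\in W^{2,\infty}$, both $\|\epsilon\nabla\phi_h-(I_h\epsilon)\nabla\phi_h\|_{0,\tau}$ and $h_\tau\|\nabla(\epsilon-I_h\epsilon)\cdot\nabla\phi_h\|_{0,\tau}$ are then $O(h_\tau^2|\epsilon|_{2,\infty,\tau}\|\nabla\phi_h\|_{0,\tau})$ and the remainder has the claimed form; alternatively, adopt the paper's bubble-function argument, which sidesteps the issue entirely. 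Minor point: the bound you cite for $\|\widehat R_{1h}\|_{0,\tau}$ is \eqref{lem-low-phi-7}, an intermediate estimate inside the proof of Lemma \ref{lem-low-phi} rather than part of its statement, so it needs to be quoted as such.
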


\begin{proof}
 From the definition of $G_h$, we get
 $$ G_h\phi_h - \epsilon(x)\nabla\phi_h = \sum_{z \in \partial^{2} \mathcal{T}^{h}}(\epsilon(x)\nabla\phi_h)_{z} \varphi_{z}-\epsilon(x)\nabla\phi_h, $$
 where $\varphi$ is the basis function. Thus $\forall x\in\tau, \tau\in\mathcal{T}^h$, if $\{z_i:i=1,2,3\}$ is the vertex set of $\tau$, then by using the similar arguments as (3.19)-(3.22) in \cite{Y.Yang2006}, we have
 \begin{align} \label{lower-Ghphi0}
  G_{h} \phi_h - \epsilon(x) \nabla \phi_h
  &=\sum_{i=1}^{3} \varphi_{z_i}(x)\left(\sum_{j=1}^{J_{z_i}}
    \alpha_{z_{i}}^{j}\big(\epsilon\left(z_{i}\right) \nabla \phi_h\big)_{\tau_{z_i}^{j}}\right)-(\epsilon \nabla \phi_h)(x) \notag\\
  &= \sum_{i=1}^{3} \varphi_{z_{i}}(x) \sum_{j=1}^{J_{z_i}}
     \alpha_{z_{i}}^{j}\left(\big(\epsilon\left(z_{i}\right) \nabla \phi_h\big)_{\tau_{z_i}^{j}}-\big(\epsilon(x) \nabla \phi_h\big)_{\tau}\right) \notag\\
  &= \sum_{i=1}^{3} \varphi_{z_{i}}(x) \sum_{j=1}^{J_{z_i}}
     \alpha_{z_{i}}^{j}\left(\big(\epsilon\left(z_{i}\right) \nabla \phi_h\big)_{\tau_{z_i}^{j}}-\big(\epsilon\left(z_{i}\right) \nabla \phi_h\big)_{\tau}\right) \notag\\
  &\quad + \left(\sum_{i=1}^{3} \varphi_{z_{i}}(x)
     \big(\epsilon\left(z_{i}\right)\big)_{\tau} - \epsilon(x)\right)(\nabla \phi_h)_{\tau}.
 \end{align}
 We can find a cluster of simplices $\tau', \tau_{1}, \cdots, \tau_{K}, \tau \in \omega_{z}$, such that $\overline{\tau}_{k} \cap \overline{\tau}_{k+1}= l_{k} \in \partial \mathcal{T}^{h}~(k=0,1, \cdots, K+1)$, where $\tau_0=\tau'$ and $\tau_{K+1}=\tau$. Thus
\begin{align*}
 \big(\epsilon\left(z_{i}\right) \nabla \phi_h\big)_{\tau^{\prime}}-\big(\epsilon\left(z_{i}\right) \nabla \phi_h\big)_{\tau}
 &=\sum_{k=0}^{K}\left(\big(\epsilon\left(z_{i}\right) \nabla\phi_h\big)_{\tau_{k}}
   -\big(\epsilon\left(z_{i}\right) \nabla \phi_h\big)_{\tau_{k+1}}\right) \\
 &=\sum_{k=0}^{K}\left(\big(\epsilon\left(z_{i}\right) \nabla \phi_h\big)_{\tau_{k}}\cdot
   n_{l_{k}}-\big(\epsilon\left(z_{i}\right) \nabla\phi_h\big)_{\tau_{k+1}} \cdot n_{l_{k}}\right) n_{l_k}.
\end{align*}
 That is
 \begin{equation} \label{lower-phi00}
 \big(\epsilon\left(z_{i}\right) \nabla \phi_h\big)_{\tau^{\prime}}-\big(\epsilon\left(z_{i}\right) \nabla \phi_h\big)_{\tau}=\sum_{k=0}^{K}\left[\epsilon\left(z_{i}\right) \nabla \phi_h \cdot n_{l_{k}}\right] n_{l_{k}},
 \end{equation}
 where $n_{l_k}$ is the unit normal vector to $l_k$. If $z$ is a vertex of $l_k$, then
 \begin{align} \label{lower-phi01}
 \Big\|\left[\epsilon(z) \nabla \phi_h \cdot n_{l_{k}}\right]\Big\|_{0, l_{k}}
 &\le \Big\|[\epsilon \nabla \phi_h \cdot n_{l_{k}}]\Big\|_{0,l_{k}}
     +\Big\|\left[\big(\epsilon - \epsilon(z)\big) \epsilon^{-1} \epsilon \nabla \phi_h \cdot n_{l_{k}}\right]\Big\|_{0, l_{k}}  \notag\\
 &\le C \xi_{h, l_k}
      \Big\|\left[\epsilon \nabla \phi_h \cdot n_{l_{k}}\right]\Big\|_{0, l_{k}},
 \end{align}
 where
 $\xi_{h, l_k}=1+h_{l_k}|\epsilon|_{1, \infty, l_k}\left\|\epsilon^{-1}\right\|_{0, \infty, l_k}$.

 Obviously, we know that $\sum\limits_{i=1}^{3} \varphi_{z_{i}}(x)\big(\epsilon\left(z_{i}\right)\big)_{\tau}$ is the Lagrange interpolation of $\epsilon(x)$ and
 \begin{equation} \label{lower-phi02}
 \left\|\left(\sum_{i=1}^{3}
  \varphi_{z_{i}}(x)\big(\epsilon\left(z_{i}\right)\big)_{\tau}-\epsilon(x)\right)
   \nabla\phi_h\right\|_{0, \tau} \le C h_{\tau}^{2}|\epsilon|_{2, \infty,\tau}|\phi_h|_{1,\tau}.
 \end{equation}
 Then from \eqref{lem-low-phi-0} and \eqref{lower-Ghphi0}-\eqref{lower-phi02}, we get
 \begin{align} \label{lower-phi03}
  \left\|D_{h}(\phi_h)\right\|_{0, \tau}
    &\le C  \Big(\sum_{i=1}^{3} \sum_{l \in \partial \mathcal{T}^{h}, l \subset
      \omega_{z_{i}}} h_l^{\frac 1 2}\xi_{h,l}\|[\epsilon\nabla\phi_h\cdot n_l]\|_{0,l}+h_{\tau}^{2}|\epsilon|_{2,\infty,\tau}|\phi_h|_{1,\tau}\Big)\notag\\
 &\le C \xi_{h, \omega_{\tau}}
     \Big( \|\nabla(\phi-\phi_h)\|_{0,\omega_\tau}
      + h_{\omega_\tau}\sum\limits_{i=1}^n\|p^i-p_h^i\|_{0,\omega_\tau}
      + r_{\phi_h,\omega_\tau} \Big)
      + C h_{\tau}^{2}|\epsilon|_{2,\infty, \tau}|\phi_h|_{1,\tau} \notag\\
 &\le C \xi_{h, \omega_{\tau}}
     \Big( \|\nabla(\phi-\phi_h)\|_{0,\omega_\tau}
      + h_{\omega_\tau}\sum\limits_{i=1}^n\|p^i-p_h^i\|_{0,\omega_\tau} \Big)
      + \widetilde{r}_{\phi_h,\omega_\tau},
 \end{align}
 where
 \begin{align*}
  &\widetilde{r}_{\phi_h,\omega_\tau}
   \le C \xi_{h, \omega_{\tau}} h_{\omega_{\tau}}^{2}
            \big( \|\epsilon\|_{2,\infty,\omega_\tau}\|\phi_h\|_{1,\omega_{\tau}} +|f|_{1,\omega_\tau} \big), \\
 &\xi_{h, \omega_{\tau}}=\max _{l \in \partial \mathcal{T}^{h}, l \subset \omega_{\tau} \backslash \partial \omega_{\tau}} \xi_{h, l}, ~~ \xi_{h, l}=1+h_{l}|\epsilon|_{1, \infty, l}\left\|\epsilon^{-1}\right\|_{0, \infty, l}.
 \end{align*}

 Next, we only need to estimate $ \|R_{1h}(p_h^i,\phi_{h})\|_{0,\tau}$. Define
 \begin{align*}
  \widetilde{R}_{1h}(p^i_{h},\phi_{h})\big|_\tau
   =\big({\rm div}(G_{h}\phi_{h})\big)|_{\tau}+\sum_{i=1}^nq^ip^i_{h}+\frac {1} {|\tau|}\int_{\tau}f.
 \end{align*}
 It is seen that $ \widetilde{R}_{1h}(p^i_{h},\phi_{h})\in \mathcal{P}^1(\tau)$ and
 \begin{align}\label{th-phi-lower-1}
  \|R_{1h}(p^i_{h},\phi_{h})-\widetilde{R}_{1h}(p^i_{h},\phi_{h})\|_{0,\tau}
  \le C h_{\tau} |f|_{1,\tau}.
 \end{align}
 On the other hand, for any $ w\in
 H^1_{0}(\Omega)$, taking $\chi=0$ in \eqref{upphi-3-0}, it yields
 \begin{align} \label{th-phi-lower-2}
 \widetilde{a}(\phi-\phi_h,w)
 =\big(R_{1h}(p_h^i,\phi_h),w\big)+\big(\sum_{i=1}^nq^i(p^i-p^i_h),w\big)
   +\big(D_h(\phi_h),\nabla w\big),
 \end{align}
 where
 $R_{1h}(p_h^i,\phi_h)=\sum\limits_{i=1}^nq^ip_h^i + {\rm div}(G_h\phi_h) + f$ and $D_h(\phi_h)= G_{h}\phi_{h} - \epsilon(x)\nabla\phi_{h}$.

 Further, from \eqref{th-phi-lower-2}, we have
 \begin{align} \label{th-phi-lower-3}
  \big(\widetilde{R}_{1h}(p_h^i,\phi_h),w\big)_\tau
  &=\widetilde{a}(\phi-\phi_h,w)_\tau
   -\big(R_{1h}(p^i_{h},\phi_{h})
   -\widetilde{R}_{1h}(p^i_{h},\phi_{h}),w\big)_\tau \notag \\
  &\quad -\big(\sum_{i=1}^nq^i(p^i-p^i_h),w\big)_\tau
         -\big(D_h(\phi_h),\nabla w\big)_\tau.
  \end{align}
 Taking $w=\mu_\tau\widetilde{R}_{1h}(p_h^i,\phi_h)$ in \eqref{th-phi-lower-3}, by Lemma \ref{lem-lower}, we get
 \begin{align*} 
  \|\widetilde{R}_{1h}(p_h^i,\phi_h)\|_{0,\tau}^2
  &\le C \big(\widetilde{R}_{1h}(p_h^i,\phi_h),
             \mu_\tau\widetilde{R}_{1h}(p_h^i,\phi_h)\big) \notag\\
  &\le C\Big( h_\tau^{-1} \|\nabla(\phi-\phi_h)\|_{0,\tau}
       +\|R_{1h}(p^i_{h},\phi_{h})-\widetilde{R}_{1h}(p^i_{h},\phi_{h})\|_{0,\tau}  \notag \\
  &\quad + \sum_{i=1}^n\|p^i-p^i_h\|_{0,\tau}
         +  h_\tau^{-1}\|D_h(\phi_h)\|_{0,\tau}\Big)
            \|\widetilde{R}_{1h}(p^i_{h},\phi_{h})\|_{0,\tau}.
 \end{align*}
 This implies that
 \begin{align} \label{th-phi-lower-4-0}
  \|\widetilde{R}_{1h}(p_h^i,\phi_h)\|_{0,\tau}
   &\le C\Big(  h_\tau^{-1} \|\nabla(\phi-\phi_h)\|_{0,\tau}
       +\|R_{1h}(p^i_{h},\phi_{h})-\widetilde{R}_{1h}(p^i_{h},\phi_{h})\|_{0,\tau} \notag\\
  &\quad + \sum_{i=1}^n\|p^i-p^i_h\|_{0,\tau}
         + h_\tau^{-1}\|D_h(\phi_h)\|_{0,\tau} \Big).
 \end{align}
 Then, by \eqref{th-phi-lower-1} and \eqref{th-phi-lower-4-0}, we get
 \begin{align} \label{th-phi-lower-4-1}
 &\|R_{1h}(p_h^i,\phi_h)\|_{0,\tau}
   \le \|\widetilde{R}_{1h}(p_h^i,\phi_h)\|_{0,\tau}
      + \|R_{1h}(p^i_{h},\phi_{h})-\widetilde{R}_{1h}(p^i_{h},\phi_{h})\|_{0,\tau}
    \notag \\
 &\le C \Big( h_\tau^{-1} \|\nabla(\phi-\phi_h)\|_{0,\tau}
       +  h_{\tau} |f|_{1,\tau} + \sum_{i=1}^n\|p^i-p^i_h\|_{0,\tau}
       +  h_\tau^{-1}\|D_h(\phi_h)\|_{0,\tau} \Big).
 \end{align}
 Hence, from \eqref{lower-phi03} and \eqref{th-phi-lower-4-1}, there holds
 \begin{align} \label{th-phi-lower-4-2}
  h_\tau\|R_{1h}(p_h^i,\phi_h)\|_{0,\tau}
  \le C\xi_{h, \omega_{\tau}}
     \Big( \|\nabla(\phi-\phi_h)\|_{0,\omega_\tau}
  + h_{\omega_\tau}\sum\limits_{i=1}^n
       \|p^i-p_h^i\|_{0,\omega_\tau} \Big)
  + \widetilde{r}_{\phi_h,\omega_\tau}.
 \end{align}
 where
 \begin{align*}
  &\widetilde{r}_{\phi_h,\omega_\tau}
   \le C \xi_{h, \omega_{\tau}} h_{\omega_{\tau}}^{2}
            \big(  \|\epsilon\|_{2,\infty,\omega_\tau}\|\phi_h\|_{1,\omega_{\tau}} +|f|_{1,\omega_\tau} \big), \\
 &\xi_{h, \omega_{\tau}}=\max _{l \in \partial \mathcal{T}^{h}, l \subset \omega_{\tau} \backslash \partial \omega_{\tau}} \xi_{h, l}, ~~ \xi_{h, l}=1+h_{l}|\epsilon|_{1, \infty, l}\left\|\epsilon^{-1}\right\|_{0, \infty, l}.
 \end{align*}

 Then the desired result \eqref{th-phi-lower-0} can be obtained by \eqref{lower-phi03} and \eqref{th-phi-lower-4-2}. This completes the proof.   $\hfill\Box$
\end{proof}

Now we turn to derive the lower bound of the a posteriori error indicator for $\|\nabla(p^i-p_h^i)\|_{0,\omega_\tau}$. First, we need the following lemmas.
\\

The following result was shown in \cite{N.Yan2001} for the gradient recovery operator $\widetilde{G}_h$ defined in (\ref{Ghwh0-0}).
\begin{lemma}\cite{N.Yan2001} \label{xlemm}
    For any $v_{h}\in S_{0}^h$, $ \tau\in \mathcal{T}^h$, $l\in \partial \mathcal{T}^h,l\not\subset\partial\Omega$, there holds
 \begin{equation} \label{ghrec-vh}
  \|\widetilde{G}_{h}v_{h} - \nabla v_{h}\|_{0,\tau}
  \le C\sum_{l\subset (\omega_{\tau}\setminus \partial\omega_{\tau})}h^{\frac{1} {2}}_{{l}}\|\widetilde{J}_{h,l}(v_{h})\|_{0,l},
 \end{equation}
 where $\widetilde{J}_{h,l}(v_{h}) = [\nabla v_h\cdot n_l]$.
\end{lemma}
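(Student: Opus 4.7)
The plan is to combine a local vertex-by-vertex algebraic identity with a telescoping chain argument across interior edges, exploiting that the tangential gradient of a continuous piecewise linear function is continuous across faces.

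First I would fix $\tau \in \mathcal{T}^h$ with vertex set $\{z_1,z_2,z_3\}$ and use the partition-of-unity property $\sum_{i=1}^3 \varphi_{z_i}(x) = 1$ on $\tau$ together with $\sum_{j=1}^{J_{z_i}} \alpha_{z_i}^j = 1$ to rewrite, for $x\in\tau$,
$$
\widetilde{G}_h v_h(x) - \nabla v_h(x)
= \sum_{i=1}^{3} \varphi_{z_i}(x) \sum_{j=1}^{J_{z_i}} \alpha_{z_i}^{j}\big((\nabla v_h)_{\tau_{z_i}^{j}} - (\nabla v_h)_{\tau}\big).
$$
This reduces the estimate to controlling each difference $(\nabla v_h)_{\tau_{z_i}^{j}} - (\nabla v_h)_{\tau}$ within the patch $\omega_{z_i}\subset\omega_\tau$.

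Second, for every $\tau_{z_i}^j \in \omega_{z_i}$ I would connect it to $\tau$ by a chain of simplices $\tau = \tau_0,\tau_1,\ldots,\tau_K = \tau_{z_i}^j$ sharing common interior faces $l_k \in \partial\mathcal{T}^h$ with $l_k \subset \omega_\tau \setminus \partial\omega_\tau$; by shape regularity, $K$ is bounded uniformly. Telescoping gives
$$
(\nabla v_h)_{\tau_{z_i}^j} - (\nabla v_h)_{\tau} = \sum_{k=0}^{K-1}\big((\nabla v_h)_{\tau_{k+1}} - (\nabla v_h)_{\tau_k}\big).
$$
The key structural observation is that $v_h \in S_0^h$ is continuous and piecewise linear, so the tangential component of $\nabla v_h$ is continuous across each interior face $l_k$; hence only the normal component can jump, and
$$
(\nabla v_h)_{\tau_{k+1}} - (\nabla v_h)_{\tau_k} = [\nabla v_h \cdot n_{l_k}]_{l_k}\, n_{l_k} = \widetilde{J}_{h,l_k}(v_h)\, n_{l_k}.
$$

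Third, I would collect these two facts. Using $0\le \varphi_{z_i}(x) \le 1$ on $\tau$, nonnegativity of $\alpha_{z_i}^j$, and the bounded-overlap property of the chains (each face appears in at most a controlled number of them), I obtain the pointwise bound
$$
|\widetilde{G}_h v_h(x) - \nabla v_h(x)| \le C \sum_{l\subset (\omega_\tau \setminus \partial\omega_\tau)} |\widetilde{J}_{h,l}(v_h)|,\qquad x\in\tau.
$$
Because $\widetilde{J}_{h,l}(v_h)$ is constant along each face $l$ (both adjacent $\nabla v_h|_{\tau'}$ are constant vectors), we have $\|\widetilde{J}_{h,l}(v_h)\|_{L^\infty(l)} = h_l^{-1/2}\|\widetilde{J}_{h,l}(v_h)\|_{0,l}$. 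Integrating over $\tau$ and using shape regularity ($|\tau|^{1/2}\le C h_\tau \le C h_l$) then yields the claimed estimate
$$
\|\widetilde{G}_h v_h - \nabla v_h\|_{0,\tau} \le C \sum_{l\subset (\omega_\tau \setminus \partial\omega_\tau)} h_l^{1/2} \|\widetilde{J}_{h,l}(v_h)\|_{0,l}.
$$
I expect the only delicate point to be the bookkeeping for the telescoping chains and ensuring that every face used in any chain lies in $\omega_\tau \setminus \partial\omega_\tau$, which is precisely guaranteed by taking the chains inside $\omega_{z_i} \subset \omega_\tau$ and by the fact that only interior faces carry a nonzero jump.
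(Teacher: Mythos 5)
Your argument is correct and is essentially the same one the paper relies on: the paper cites this lemma from Yan--Zhou without proof, but your partition-of-unity decomposition, telescoping chain of simplices, and normal-jump identity are exactly the steps the authors carry out for the weighted analogues in \eqref{lower-Ghphi0}--\eqref{lower-phi02} and \eqref{lower-ph-Dhph}--\eqref{lower-ph-Dhph-3}, specialized to $\epsilon\equiv 1$ so that the Lagrange-interpolation remainder and the factor $\xi_{h,l}$ disappear. The scaling step $|\tau|^{1/2}h_l^{-1/2}\le Ch_l^{1/2}$ and the constancy of the jump on each edge are handled correctly, so no gap remains.
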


 Furthermore, from (\ref{bouepi}) and \eqref{epsilon-jh}, we can easily get
 \begin{align} \label{jh-new}
 \|\widetilde{J}_{h,l}(v_{h})\|_{0,l}
 \le C\|\widehat{J}_{h,l}(v_{h})\|_{0,l}.
 \end{align}
 Then for $\phi_{h}\in S_{0}^h$, $ \tau\in \mathcal{T}^h$, $l\in \partial \mathcal{T}^h,l\not\subset\partial\Omega$, from \eqref{lem-low-phi-0}, \eqref{ghrec-vh} and \eqref{jh-new}, we have
 \begin{equation}\label{jh-new-1}
  \|\widetilde{G}_{h}\phi_{h} - \nabla \phi_{h}\|_{0,\tau}
  \le C\Big( \|\nabla(\phi-\phi_h)\|_{0,\omega_\tau} + h_{\omega_\tau}\sum\limits_{i=1}^n\|p^i-p_h^i\|_{0,\omega_\tau}\Big)
       +r_{\phi_h,\omega_\tau},
 \end{equation}
where $r_{\phi_h,\omega_\tau}\leq C h_{\omega_\tau}^2(\|\epsilon\|_{2,\infty,\omega_\tau}\|\phi_h\|_{1,\omega_\tau}+\|f\|_{1,\omega_\tau})$.

 By using the above results, similar to Lemma \ref{lem-low-phi}, we have the following lemma.
\begin{lemma}\label{lem-low-jhp0}
  Let $(p^i, \phi)$ and $(p_h^i, \phi_h)$ be the solutions of \eqref{weak-p}-\eqref{weak-phi} and \eqref{fem-ph}-\eqref{fem-phih}, respectively. For any $l\in \partial \mathcal{T}^h$, $l\not\subset\partial\Omega$, if $h_l\ll 1$, the coefficients $\alpha,~\beta,~g$ and $\gamma$ of (\ref{MPNP-model}) satisfy
   \begin{align} \label{lipcond}
  \|\alpha(x,p^i)-\alpha(x,p^i_h)\|_{0,\omega_l}&+\|\beta(x,p^i)-\beta(x,p^i_h)\|_{0,\omega_l}
  +\|g(x,p^i)-g(x,p^i_h)\|_{0,\omega_l}+\|\gamma(x,p^i)-\gamma(x,p^i_h)\|_{0,\omega_l}\notag\\
  &\leq C \|p^i-p_h^i\|_{0,\omega_l}
 \end{align}
 and $f\in H^1(\ome)$, then there holds
 \begin{equation}\label{lem-low-jhp0-0}
  h^{\frac 1 2}_{\omega_l}\|J_{h,l}(p_h^i)\|_{0,l}
  \le C \big( \|\nabla(p^i-p_h^i)\|_{0,\omega_\tau}
   + \|\nabla(\phi-\phi_h)\|_{0,\omega_\tau}
   + \sum\limits_{i=1}^n\|p^i-p_h^i\|_{0,\omega_\tau} \big)
   + r_{p_h^i,\omega_\tau},
 \end{equation}
 where
 \beas J_{h,l}(p_h^i)=[\alpha(x,p_h^i)\nabla p_h^i \cdot n_l]
 \eeas
 and
 $$r_{p_h^i,\omega_\tau} \le C h_{\omega_\tau}^2 \big( \|\alpha(x,p_h^i)\|_{2,\infty,\omega_\tau}\|p_h^i\|_{1,\omega_\tau}
        + |\beta(x,p_h^i)|_{2,\omega_\tau} + |g(x,p_h^i)|_{1,\omega_\tau}+\|\epsilon\|_{2,\infty,\omega_\tau}\|\phi_h\|_{1,\omega_\tau}
        + |f|_{1,\omega_\tau}\big).$$
\end{lemma}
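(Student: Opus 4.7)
The proof follows the pattern of Lemma \ref{lem-low-phi}, adapted to the richer structure of the concentration equation. The plan is to derive a local identity on $\omega_l$ that exposes both an interior residual and the edge jump $J_{h,l}(p_h^i)$, then bound each piece using the bubble functions $\nu_l$ and $\mu_\tau$ from Lemma \ref{lem-lower}. First I would combine \eqref{weak-p} and \eqref{fem-ph} and, for any $w\in H^1_0(\omega_l)$ extended by zero, apply elementwise integration by parts to $-a(p_h^i,w)-b(p_h^i,\phi_h,w)$. Since $\beta(\cdot,p_h^i)$, $g(\cdot,p_h^i)$ and $\gamma(\cdot,p_h^i)$ are continuous across $l$, only $\alpha(\cdot,p_h^i)\nabla p_h^i\cdot n_l$ and $\gamma(\cdot,p_h^i)\nabla\phi_h\cdot n_l$ produce boundary contributions, yielding
\begin{align*}
 (J_{h,l}(p_h^i),w)_l + (\gamma(\cdot,p_h^i)\widetilde J_{h,l}(\phi_h),w)_l
 &= (\widehat R(p_h^i,\phi_h),w)_{\omega_l} \\
 &\quad - [a(p^i,w)-a(p_h^i,w)] - [b(p^i,\phi,w)-b(p_h^i,\phi_h,w)],
\end{align*}
where $\widehat R(p_h^i,\phi_h)$ is the true (non-averaged) interior residual and $\widetilde J_{h,l}(\phi_h)$ is as in Lemma \ref{xlemm}. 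The two bracketed terms on the right are controlled through \eqref{lipcond} and the $L^\infty$ bound on $\gamma$ by $C(\|\nabla(p^i-p_h^i)\|_{0,\omega_l}+\|p^i-p_h^i\|_{0,\omega_l}+\|\nabla(\phi-\phi_h)\|_{0,\omega_l})\|w\|_{1,\omega_l}$.

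Mirroring the treatment of $\bar\epsilon$ in the proof of Lemma \ref{lem-low-phi}, I would then approximate $\alpha(\cdot,p_h^i)$ by a linear interpolant $\bar\alpha$ on $l$ (well defined by the continuity of $p_h^i$) and set $\bar J_{h,l}(p_h^i)=[\bar\alpha\,\nabla p_h^i\cdot n_l]$, so that $\|\alpha(\cdot,p_h^i)-\bar\alpha\|_{0,\infty,l}\le Ch_l\|\alpha(\cdot,p_h^i)\|_{1,\infty,\omega_l}$ and hence $\|J_{h,l}(p_h^i)-\bar J_{h,l}(p_h^i)\|_{0,l}\le Ch_l\|J_{h,l}(p_h^i)\|_{0,l}$. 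Choosing $w=\nu_l\bar J_{h,l}(p_h^i)$ in the identity above, invoking \eqref{lem-lower-3}--\eqref{lem-lower-5} and dividing through by $\|\bar J_{h,l}(p_h^i)\|_{0,l}$ produces
\begin{align*}
 \|\bar J_{h,l}(p_h^i)\|_{0,l}
 \le C\Big(& h_{\omega_l}^{1/2}\|\widehat R(p_h^i,\phi_h)\|_{0,\omega_l} + \|\gamma(\cdot,p_h^i)\widetilde J_{h,l}(\phi_h)\|_{0,l} \\
 & + h_{\omega_l}^{-1/2}(\|\nabla(p^i-p_h^i)\|_{0,\omega_l}+\|\nabla(\phi-\phi_h)\|_{0,\omega_l}) + h_{\omega_l}^{1/2}\|p^i-p_h^i\|_{0,\omega_l}\Big).
\end{align*}
The $\gamma(\cdot,p_h^i)\widetilde J_{h,l}(\phi_h)$ factor is then handled by combining the $L^\infty$ bound on $\gamma$ with \eqref{jh-new} and Lemma \ref{lem-low-phi}, which after multiplication by $h_{\omega_l}^{1/2}$ contributes exactly the $r_{\phi_h,\omega_l}$ piece of the advertised remainder.

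It remains to estimate $\|\widehat R(p_h^i,\phi_h)\|_{0,\tau}$. Exactly as with $\widetilde M_h$ in Lemma \ref{lem-low-phi}, I would introduce a piecewise-constant projection $\widetilde R(p_h^i,\phi_h)$ obtained by elementwise averaging the lower-order data $\beta(\cdot,p_h^i)$, $g(\cdot,p_h^i)$, $\nabla\cdot(\alpha(\cdot,p_h^i)\nabla p_h^i)$ and $\nabla\cdot(\gamma(\cdot,p_h^i)\nabla\phi_h)$; the oscillation $\|\widehat R-\widetilde R\|_{0,\tau}$ is bounded by $Ch_\tau(\|\alpha(\cdot,p_h^i)\|_{2,\infty,\tau}\|p_h^i\|_{1,\tau}+|\beta(\cdot,p_h^i)|_{2,\tau}+|g(\cdot,p_h^i)|_{1,\tau}+\|\epsilon\|_{2,\infty,\tau}\|\phi_h\|_{1,\tau}+|f|_{1,\tau})$ (the $\epsilon,f$ contribution entering through the $\phi_h$ piece after applying \eqref{phih-infty-new1}), while $\|\widetilde R\|_{0,\tau}$ is bounded by testing against $\mu_\tau\widetilde R$ and using \eqref{lem-lower-1}--\eqref{lem-lower-2}. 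Multiplying the resulting estimate by $h_{\omega_l}^{1/2}$, absorbing the $h_l\|J_{h,l}(p_h^i)\|_{0,l}$ term under $h_l\ll 1$, and adding in the bound for $\gamma(\cdot,p_h^i)\widetilde J_{h,l}(\phi_h)$ yields \eqref{lem-low-jhp0-0}. The main obstacle is the bookkeeping of the composite coefficients $\alpha(\cdot,p_h^i), \beta(\cdot,p_h^i), g(\cdot,p_h^i), \gamma(\cdot,p_h^i)$ through the interpolation and oscillation steps, so that the nonlinear dependence on the discrete solution is absorbed cleanly and only the Sobolev norms displayed in $r_{p_h^i,\omega_\tau}$ appear; here the linearity of $\gamma$ in its second argument, assumed in the lemma's hypotheses, is precisely what turns $\gamma(\cdot,p_h^i)$ into a Lipschitz, $L^\infty$-bounded multiplier so that it can be pulled out of $\|\gamma(\cdot,p_h^i)\widetilde J_{h,l}(\phi_h)\|_{0,l}$ without losing a derivative.
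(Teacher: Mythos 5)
Your proposal is correct in substance and shares the overall architecture of the paper's proof (localization on $\omega_l$, the cut-off functions $\nu_l$ and $\mu_\tau$ from Lemma \ref{lem-lower}, the linear interpolant $\bar\alpha$ of $\alpha(\cdot,p_h^i)$ with the absorption of $h_l\|J_{h,l}(p_h^i)\|_{0,l}$ for $h_l\ll1$, and an oscillation argument for the interior residual), but it treats the coupling term $b(p_h^i,\phi_h,\cdot)$ by a genuinely different decomposition. You integrate $\gamma(\cdot,p_h^i)\nabla\phi_h$ by parts directly, so the edge jump $\gamma(\cdot,p_h^i)[\nabla\phi_h\cdot n_l]$ appears explicitly in your identity and is then bounded through \eqref{jh-new} and Lemma \ref{lem-low-phi}; the paper instead first replaces $\nabla\phi_h$ by the continuous recovered field $\widetilde G_h\phi_h$ (see \eqref{gamma-phphih}), so that no $\phi_h$ edge jumps arise and the coupling enters only as the volume quantities $\gamma(\cdot,p_h^i)(\widetilde G_h\phi_h-\nabla\phi_h)$ and ${\rm div}\big(\gamma(\cdot,p_h^i)\widetilde G_h\phi_h\big)$, the former controlled by Lemma \ref{xlemm} and \eqref{jh-new-1}. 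Both routes ultimately rest on Lemma \ref{lem-low-phi}, and yours is adequate for the lemma as stated; what the paper's choice buys is that its residual $\widehat R^h(p_h^i,\phi_h)$ coincides with the estimator quantity $R_{2h}(p_h^i,\phi_h)$ used in Theorem \ref{th-ph-lower}, so the intermediate bound \eqref{lower-jh-1-Rh-6} is reused verbatim there, whereas your residual (built from ${\rm div}(\gamma(\cdot,p_h^i)\nabla\phi_h)$) would need an extra comparison step in that theorem. Two small bookkeeping points: to pull $\gamma(\cdot,p_h^i)$ out of the edge term in $L^\infty$ you should split it as $\big(\gamma(\cdot,p_h^i)-\gamma(\cdot,p^i)\big)+\gamma(\cdot,p^i)$ and use \eqref{lipcond} together with the boundedness of $\gamma(\cdot,p^i)$, as the paper does in \eqref{a-ph-Jh-1}, rather than assuming an a priori $L^\infty$ bound on $p_h^i$; and the terms $\|\epsilon\|_{2,\infty}\|\phi_h\|_{1}$ and $|f|_{1}$ should not be attributed to the oscillation of your residual (whose $\phi_h$-part ${\rm div}(\gamma(\cdot,p_h^i)\nabla\phi_h)$ is already piecewise polynomial) --- they enter only through the remainder $r_{\phi_h,\omega_l}$ of Lemma \ref{lem-low-phi}, which you do invoke correctly elsewhere.
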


\begin{proof}
 First, for any $v\in H_{0}^1(\Omega)$, from \eqref{aform1} and Green's formula, we have
 \begin{align} \label{a-ph-Jh0-0}
 a(p_h^i,v)
  &=-\sum_{\tau\in\mathcal{T}^h}\int_{\tau}
   \Big({\rm div}\big(\alpha(x,p_h^i)\nabla p_h^i\big)
   +{\rm div}\big(\beta(x,p_h^i)\big)-g(x,p_h^i)\Big)v
   -\sum_{l\in\partial\mathcal{T}^h,l\not\subset \partial\Omega}\int_l[\alpha(x,p_h^i)\nabla p_h^i\cdot n_l]v
    \notag \\
 &=-\sum_{\tau\in\mathcal{T}^h}\int_{\tau} R^h(p_h^i) v
   +J_h(p_h^i,v),
 \end{align}
 where
 \beas R^h(p_h^i)|_{\tau} = \Big({\rm div}\big(\alpha(x,p_h^i)\nabla p_h^i\big)+ {\rm div}\big(\beta(x,p_h^i)\big)-g(x,p_h^i)\Big)|_\tau.
 \eeas
 and
 \beas J_{h}(p_h^i,v)=\sum_{l\in\partial\mathcal{T}^h,l\not\subset \partial\Omega}\int_l[\alpha(x,p_h^i)\nabla p_h^i\cdot n_l]v
     \eeas
Hence from (\ref{weak-p}) and \eqref{a-ph-Jh0-0}, there holds
 \begin{align} \label{a-ph-Jh0}
  J_{h}(p_h^i,v)
  &= a(p_h^i,v)+\sum_{\tau\in \mathcal{T}^h}(R^h(p_h^i),v)_\tau\notag\\
  &=a(p_h^i,v)-a(p^i,v)-b(p^i,\phi,v)+\sum_{\tau\in \mathcal{T}^h}(R^h(p_h^i),v)_\tau\notag\\
  &=\Big((a(p_h^i,v)-a(p^i,v)\Big)+\Big(b(p_h^i,\phi_h,v)-b(p^i,\phi,v)\Big)-b(p_h^i,\phi_h,v)+\sum_{\tau\in \mathcal{T}^h}(R^h(p_h^i),v)_\tau
 \end{align}

 From (\ref{b-a2form}) and using the gradient recovery operator $\tilde{G}_h$, we rewrite the third term on the right-hand side of \eqref{a-ph-Jh0} as follows:
 \begin{align} \label{gamma-phphih}
  -b(p_h^i,\phi_h,v)&=-\big(  \gamma(x,p_h^i)\nabla\phi_h,\nabla v\big)\notag\\
   &= \big(\gamma(x,p_h^i)(\widetilde{G}_h\phi_h-\nabla\phi_h),
        \nabla v\big) -\big(\gamma(x,p_h^i)\widetilde{G}_h\phi_h),
           \nabla v\big) \notag\\
  &=\big(\gamma(x,p_h^i)(\widetilde{G}_h\phi_h-\nabla\phi_h),
         \nabla v\big)
    + \Big({\rm div}
      \big(\gamma(x,p_h^i)\widetilde{G}_h\phi_h\big),v\Big) \notag\\
  &=  \Big( \big(\gamma(x,p_h^i)-\gamma(x,p^i)\big)
     (\widetilde{G}_h\phi_h-\nabla\phi_h),\nabla v \Big)
      +\big(\gamma(x,p^i)(\widetilde{G}_h\phi_h-\nabla\phi_h),\nabla v \big) \notag\\
  &\quad + \Big({\rm div}
   \big(\gamma(x,p_h^i)\widetilde{G}_h\phi_h\big),v\Big).
 \end{align}
 Substituting \eqref{gamma-phphih} into \eqref{a-ph-Jh0}, we have
 \begin{align}
 J_{h}&(p_h^i,v)
  = \Big((a(p_h^i,v)-a(p^i,v)\Big)+\Big(b(p_h^i,\phi_h,v)-b(p^i,\phi,v)\Big) + \big(\widehat{R}^h(p_h^i,\phi_h),v\big)
  \notag\\
  &\quad  + \Big( \big(\gamma(x,p_h^i)-\gamma(x,p^i)\big)
     (\widetilde{G}_h\phi_h-\nabla\phi_h),\nabla v \Big)
    + \big(\gamma(x,p^i)(\widetilde{G}_h\phi_h-\nabla\phi_h), \nabla v\big),  \label{a-ph-Jh-00}
 \end{align}
 where
 $$\widehat{R}^h(p_h^i,\phi_h) = {\rm div}\big(\alpha(x,p_h^i)\nabla p_h^i\big) + {\rm div}\big(\beta(x,p_h^i)\big)-g(x,p_h^i) + {\rm div}\big(\gamma(x,p_h^i)\widetilde{G}_h\phi_h\big).$$
 By using (\ref{lipcond}), for $v\in H_0^1(\omega_l)$, it is easy to get
 \bea \label{bound} (a(p_h^i,v)-a(p^i,v)+b(p_h^i,\phi_h,v)-b(p^i,\phi,v)\leq C ( \|p^i-p_h^i\|_{1,\omega_l}+\|\nabla(\phi-\phi_h)\|_{0,\omega_l})\|v\|_{1,\omega_l}.
 \eea
 Then for any $v\in H_0^1({\omega_l})$, inserting \eqref{bound} into \eqref{a-ph-Jh-00} and from \eqref{Ghwh-infty}, \eqref{phih-infty-new0} and (\ref{lipcond}),  we get
 \begin{align}
 (J_{h,l}&(p_h^i),v)_l=J_{h}(p_h^i,v)\notag\\
  &\le C\Big( \|p^i-p_h^i\|_{1,\omega_l} \|v\|_{1,\omega_l}
    + \|\gamma(x,p_h^i)-\gamma(x,p^i)\|_{0,\omega_l}
      \|\widetilde{G}_h\phi_h-\nabla\phi_h\|_{0,\infty,\omega_l}
      \|v\|_{1,\omega_l} \notag \\
  &\quad +\|\widehat{R}^h(p_h^i,\phi_h)\|_{0,\omega_l}\|v\|_{0,\omega_l}
     + \|\gamma(x,p^i)\|_{0,\infty,\omega_l}
       \|\widetilde{G}_h\phi_h-\nabla\phi_h\|_{0,\omega_l}
        \|v\|_{1,\omega_l} \Big) \notag\\
  &\le C\Big( \|p^i-p_h^i\|_{1,\omega_l}\|v\|_{1,\omega_l}
   +\|\widehat{R}^h(p_h^i,\phi_h)\|_{0,\omega_l}\|v\|_{0,\omega_l}
   + \|\widetilde{G}_h\phi_h-\nabla\phi_h\|_{0,\omega_l}
        \|v\|_{1,\omega_l}  \notag\\
  &\quad + \|\nabla(\phi-\phi_h)\|_{0,\omega_l}
           \|v\|_{1,\omega_l} \Big)
   \label{a-ph-Jh-1}.
 \end{align}

 In order to estimate $\| J_{h,l}(p_h^i)\|_{0,l}$, similar to the estimation for $\|\widehat J_{h,l}(\phi_{h})\|_{0,l}$ in Lemma \ref{lem-low-phi}, , we introduce an approximation to $J_{h,l}(p_h^i)$, which is defined as follows
 $$ \bar J_{h,l}(p_h^i) = [\bar\alpha(x,p_h^i)\nabla p_h^i \cdot n_l],$$
 where $\bar\alpha(x,p_h^i)\in S^h$ is a linear interpolation of $\alpha(x,p_h^i)$ on $\omega_l$ satisfying (cf. \cite{ve94})
\begin{align}\label{yyalpha}
 \|\alpha(x,p_h^i) - \bar\alpha(x,p_h^i)\|_{0,\infty,l}
  \le Ch_l \|\alpha(x,p_h^i)\|_{2,\infty,\omega_l}.
 \end{align}
 For any $w\in H_0^1(\omega_l)$, by using (\ref{boualp}) and \eqref{yyalpha}, we get
 \begin{align}
 &\big(J_{h,l}(p_h^i)-\bar J_{h,l}(p_h^i),w\big)_l
  = \int_l[(\alpha(x,p_h^i) - \bar\alpha(x,p_h^i))\nabla p_h^i\cdot n_l]w \notag\\
 &\le C \|\alpha(x,p_h^i) - \bar\alpha(x,p_h^i)\|_{0,\infty,l}
        \|[\nabla p_h^i\cdot n_l]\|_{0,l}\|w\|_{0,l} \notag\\
 &\le C h_l\|\alpha(x,p_h^i)\|_{1,\infty,\omega_l}
           \|[\nabla p_h^i\cdot n_l]\|_{0,l}\|w\|_{0,l} \notag\\
 &\le C h_l\|J_{h,l}(p_h^i)\|_{0,l}\|w\|_{0,l}.
        \label{yyal-0}
 \end{align}
 Taking $w=J_{h,l}(p_h^i)-\bar J_{h,l}(p_h^i)$ in the above formula, it yields
 \begin{align}\label{yyal-1}
  \|J_{h,l}(p_h^i)-\bar J_{h,l}(p_h^i)\|_{0,l}
  \le C h_l\|J_{h,l}(p_h^i)\|_{0,l}.
 \end{align}
 On the other hand, from \eqref{a-ph-Jh-1} and \eqref{yyal-0}, for $w\in H_0^1(\omega_l)$, we have
 \begin{align*}
 \big(\bar J_{h,l}(p_h^i),w\big)_l
 &= \big(\bar J_{h,l}(p_h^i)-J_{h,l}(p_h^i),w\big)_l
     + \big(J_{h,l}(p_h^i),w\big)_l \notag\\
 &\le C \Big(  h_l\|J_{h,l}(p_h^i)\|_{0,l}\|w\|_{0,l}
   + \|p^i-p_h^i\|_{1,\omega_l}\|w\|_{1,\omega_l}
   + \|\widehat{R}^h(p_h^i,\phi_h)\|_{0,\omega_l}\|w\|_{0,\omega_l} \notag\\
 &\quad + \|\widetilde{G}_h\phi_h-\nabla\phi_h\|_{0,\omega_l}
           \|w\|_{1,\omega_l}
   + \|\nabla(\phi-\phi_h)\|_{0,\omega_l}\|w\|_{1,\omega_l}  \Big).
 \end{align*}
 Taking $w=\nu_l\bar J_{h,l}(p_h^i)$ in the above formula and by \eqref{lem-lower-3}-\eqref{lem-lower-5}, there holds
 \begin{align*}
  \|\bar J_{h,l}(p_h^i)\|_{0,l}^2
  &\le  C\big(\bar J_{h,l}(p_h^i),\nu_l\bar J_{h,l}(p_h^i)\big)_{l} \\
  & \le C\Big(
        h_{\omega_l}^{-\frac{1}{2}}\|p^i-p_h^i\|_{1,\omega_l}
      + h_{\omega_l}^{\frac{1}{2}}\|\widehat{R}^h(p_h^i,\phi_h)\|_{0,\omega_l} \notag\\
  &\quad  + h_{\omega_l}^{-\frac{1}{2}} \|\widetilde{G}_h\phi_h-\nabla\phi_h\|_{0,\omega_l}
  + h_{\omega_l}^{-\frac{1}{2}}\|\nabla(\phi-\phi_h)\|_{0,\omega_l}
  + h_l\|J_{h,l}(p_h^i)\|_{0,l} \Big)
           \|\bar{J}_{h,l}(p_h^i)\|_{0,l}.
 \end{align*}
From the above inequality and by using \eqref{yyal-1}, we get
\begin{align*}
  h_{\omega_l}^{\frac{1}{2}}\|J_{h,l}(p_h^i)\|_{0,l}
  &\le  h_{\omega_l}^{\frac{1}{2}} \big( \|\bar J_{h,l}(p_h^i)\|_{0,l}
  + \|J_{h,l}(p_h^i)-\bar J_{h,l}(p_h^i)\|_{0,l} \big) \notag\\
  &\le C \Big( \|p^i-p_h^i\|_{1,\omega_l}
         + h_{\omega_l}\|\widehat{R}^h(p_h^i,\phi_h)\|_{0,\omega_l}+ \|\widetilde{G}_h\phi_h-\nabla\phi_h\|_{0,\omega_l}  \notag\\
  &\quad + \|\nabla(\phi-\phi_h)\|_{0,\omega_l}
   + h_l^{\frac{3}{2}}\|J_{h,l}(p_h^i)\|_{0,l}
         \Big).
 \end{align*}
Hence, choosing $h_l$ sufficiently small such that $C h_l\ll 1$, we obtain
\begin{align} \label{lower-jh-1}
  h_{\omega_l}^{\frac{1}{2}}\|J_{h,l}(p_h^i)\|_{0,l}
  &\le C \Big( \|p^i-p_h^i\|_{1,\omega_l}
         + h_{\omega_l}\|\widehat{R}^h(p_h^i,\phi_h)\|_{0,\omega_l}+ \|\widetilde{G}_h\phi_h-\nabla\phi_h\|_{0,\omega_l}  \notag\\
  &\quad + \|\nabla(\phi-\phi_h)\|_{0,\omega_l} \Big).
 \end{align}

 Now we only need to estimate $\|R^h(p_h^i,\phi_h)\|_{0,\omega_l}$. Define
 \begin{align}\label{lower-jh-1-Rh}
  \widetilde{R}^h(p_h^i,\phi_h)|_\tau = {\rm div}\big(\gamma(x,p_h^i)\widetilde{G}_h\phi_h\big) + \frac{1}{|\tau|}\int_\tau {\rm div}\big(\alpha(x,p_h^i)\nabla p_h^i\big) +\frac{1}{|\tau|}\int_\tau{\rm div}\big(\beta(x,p_h^i)\big) - \frac{1}{|\tau|}\int_\tau g(x,p_h^i).
 \end{align}
 Since $\gamma(x,p_h^i)$ is assumed to be a linear function with respect to $p_h^i$, then $\widetilde{R}^h(p_h^i,\phi_h) \in \mathcal{P}^1(\tau)$ and
 \begin{align}\label{lower-jh-1-Rh-1}
  \|\widehat{R}^h(p_h^i,\phi_h) - \widetilde{R}^h(p_h^i,\phi_h)\|_{0,\tau}
  \le C h_\tau \big( \|\alpha(x,p_h^i)\|_{2,\infty,\tau}\|p_h^1\|_{1,\tau}
      + |\beta(x,p_h^i)|_{2,\tau} + |g(x,p_h^i)|_{1,\tau} \big).
 \end{align}
 On the other hand, from \eqref{a-ph-Jh-00}, for any $v\in H_0^1(\tau)$, we have
 \begin{align}\label{lower-jh-1-Rh-2}
  \big(\widetilde{R}^h(p_h^i,\phi_h),v\big)_\tau
  &=(\widehat{R}^h(p_h^i,\phi_h),v)_\tau- \big(\widehat{R}^h(p_h^i,\phi_h) - \widetilde{R}^h(p_h^i,\phi_h), v\big)_\tau\notag\\
  &=-\Big((a(p_h^i,v)_\tau-a(p^i,v)_\tau\Big)-\Big(b(p_h^i,\phi_h,v)_\tau-b(p^i,\phi,v)_\tau\Big)
   \notag\\
  &\quad - \big(\widehat{R}^h(p_h^i,\phi_h) - \widetilde{R}^h(p_h^i,\phi_h), v\big)_\tau -\Big( \big(\gamma(x,p_h^i)-\gamma(x,p^i)\big)
     (\widetilde{G}_h\phi_h-\nabla\phi_h),\nabla v \Big)_\tau \notag\\
    & \quad - \big(\gamma(x,p^i)(\widetilde{G}_h\phi_h-\nabla\phi_h), \nabla v\big)_\tau,
 \end{align}
 where we have used $\big(J_{h,l}(p_h^i),v\big)_l=0$, for any $l\in \partial \tau$.
 Then from \eqref{bound} and the similar arguments as in \eqref{a-ph-Jh-1}, we get
 \begin{align}\label{lower-jh-1-Rh-3}
 \big(\widetilde{R}^h(p_h^i,\phi_h),v\big)_\tau
 &\le C \big( \|p^i-p_h^i\|_{1,\tau}\|v\|_{1,\tau}
       + \|\widehat{R}^h(p_h^i,\phi_h) - \widetilde{R}^h(p_h^i,\phi_h)\|_{0,\tau}
         \|v\|_{0,\tau} \notag\\
 &\quad + \|\widetilde{G}_h\phi_h-\nabla\phi_h\|_{0,\tau}\|v\|_{1,\tau}
        + \|\nabla(\phi-\phi_h)\|_{0,\tau}\|v\|_{1,\tau} \big).
 \end{align}
 Taking $v=\mu_\tau\widetilde{R}^h(p_h^i,\phi_h)$ in \eqref{lower-jh-1-Rh-3} and by Lemma \ref{lem-lower}, there holds
 \begin{align*}
 \|\widetilde{R}^h(p_h^i,\phi_h)\|_{0,\tau}^2
 &\le C\big(\widetilde{R}^h(p_h^i,\phi_h),
             \mu_\tau\widetilde{R}^h(p_h^i,\phi_h) \big) \notag\\
 &\le C\big(  h_\tau^{-1} \|p^i-p_h^i\|_{1,\tau}
      + \|\widehat{R}^h(p_h^i,\phi_h) - \widetilde{R}^h(p_h^i,\phi_h)\|_{0,\tau}
         \notag\\
 &\quad + h_\tau^{-1} \|\widetilde{G}_h\phi_h-\nabla\phi_h\|_{0,\tau}
        + h_\tau^{-1}\|\nabla(\phi-\phi_h)\|_{0,\tau} \big) \|\widetilde{R}^h(p_h^i,\phi_h)\|_{0,\tau}.
 \end{align*}
 Hence,
 \begin{align}\label{lower-jh-1-Rh-5}
 \|\widetilde{R}^h(p_h^i,\phi_h)\|_{0,\tau}
 &\le C\big(  h_\tau^{-1} \|p^i-p_h^i\|_{1,\tau}
        + \|\widehat{R}^h(p_h^i,\phi_h) - \widetilde{R}^h(p_h^i,\phi_h)\|_{0,\tau} \notag\\
 &\quad + h_\tau^{-1}\|\widetilde{G}_h\phi_h-\nabla\phi_h\|_{0,\tau}
        + h_\tau^{-1}\|\nabla(\phi-\phi_h)\|_{0,\tau} \big).
 \end{align}
 Combining \eqref{lower-jh-1-Rh-1} and \eqref{lower-jh-1-Rh-5}, we have
 \begin{align}\label{lower-jh-1-Rh-6}
  \|\widehat{R}^h(p_h^i,\phi_h)\|_{0,\tau}
  &\le \|\widetilde{R}^h(p_h^i,\phi_h)\|_{0,\tau}
     + \|\widehat{R}^h(p_h^i,\phi_h) - \widetilde{R}^h(p_h^i,\phi_h)\|_{0,\tau} \notag\\
  &\le C \big(  h_\tau^{-1} \|p^i-p_h^i\|_{1,\tau} +  h_\tau^{-1}
          \|\widetilde{G}_h\phi_h-\nabla\phi_h\|_{0,\tau}
        +   h_\tau^{-1}\|\nabla(\phi-\phi_h)\|_{0,\tau} \big) \notag\\
  &\quad + C h_\tau \big( \|\alpha(x,p_h^i)\|_{2,\infty,\tau}\|p_h^1\|_{1,\tau}
        + |\beta(x,p_h^i)|_{2,\tau} + |g(x,p_h^i)|_{1,\tau} \big).
 \end{align}
 Inserting \eqref{lower-jh-1-Rh-6} into \eqref{lower-jh-1}, and using \eqref{jh-new-1}, we get
 \begin{align} \label{lower-jh-1new}
  h_{\omega_l}^{\frac{1}{2}}\|J_{h,l}(p_h^i)\|_{0,l}
  &\le C \big( \|p^i-p_h^i\|_{1,\omega_l}
    + \|\widetilde{G}_h\phi_h-\nabla\phi_h\|_{0,\omega_l}
    + \|\nabla(\phi-\phi_h)\|_{0,\omega_l} \big) \notag\\
  &\quad + C h_{\omega_l}^2\ \big( |\beta(x,p_h^i)|_{2,\omega_l} + |g(x,p_h^i)|_{1,\omega_l}+\|p_h^i\|_{1,\omega_l}\|\alpha(x,p_h^i)\|_{2,\infty,\omega_l} \big) \notag\\
  &\le C \Big( \|\nabla(p^i-p_h^i)\|_{0,\omega_\tau}
    + \|\nabla(\phi-\phi_h)\|_{0,\omega_\tau}
    + \sum\limits_{i=1}^n\|p^i-p_h^i\|_{0,\omega_\tau} \Big)
    + r_{p_h^i,\omega_\tau},
 \end{align}
 where $r_{p_h^i,\omega_l} \le C h_{\omega_\tau}^2 \big( \|p_h^i\|_{1,\omega_\tau}\|\alpha(x,p_h^i)\|_{2,\infty,\omega_\tau}+ |\beta(x,p_h^i)|_{2,\omega_\tau} + |g(x,p_h^i)|_{1,\omega_\tau} + |f|_{1,\omega_\tau}+\|\epsilon\|_{2,\infty,\omega_\tau}\|\phi_h\|_{1,\omega_\tau} \big)$.

 This completes the proof of Lemma \ref{lem-low-jhp0}.  $\hfill\Box$
\end{proof}

 By using the above lemma, now we present the lower bound for $\|\nabla(p^i-p_h^i)\|_{0,\omega_\tau}$.

\begin{theorem}\label{th-ph-lower}
 Let $(p^i, \phi)$ and $(p_h^i, \phi_h)$ be the solutions of \eqref{weak-p}-\eqref{weak-phi} and \eqref{fem-ph}-\eqref{fem-phih}, respectively. For any $l\in \partial \mathcal{T}^h$, $l\not\subset\partial\Omega$, if $h_l\ll 1$, the coefficients $\alpha,~\beta,~g$ and $\gamma$ of (\ref{MPNP-model}) satisfy
   \begin{align*}
  \|\alpha(x,p^i)-\alpha(x,p^i_h)\|_{0,\omega_l}&+\|\beta(x,p^i)-\beta(x,p^i_h)\|_{0,\omega_l}
  +\|g(x,p^i)-g(x,p^i_h)\|_{0,\omega_l}+\|\gamma(x,p^i)-\gamma(x,p^i_h)\|_{0,\omega_l}\notag\\
  &\leq C \|p^i-p_h^i\|_{0,\omega_l}
 \end{align*}
and $f\in H^1(\ome)$, then for any $ \tau\in \mathcal{T}^h$, there holds
 \begin{align} \label{th-ph-lower-0}	
  \eta_{\tau,p^i}(p_h^i,\phi_h)
  &\le C \widetilde{\xi}_{h,\omega_\tau}(p_h^i,\phi_h)
     \Big( \|\nabla(p^i-p_h^i)\|_{0,\omega_\tau}
    + \|\nabla(\phi-\phi_h)\|_{0,\omega_\tau}
    + \sum\limits_{i=1}^n\|p^i-p_h^i\|_{0,\omega_\tau} \Big) \notag\\
  &\quad  + \widetilde{r}_{p_h^i,\omega_\tau},
 \end{align}
 where
 \begin{align*}
 &\eta_{\tau,p^i}(p_h^i,\phi_h)
   = \|D_{h}(\phi_{h})\|_{0,\tau} + \|D_{h}(p_h^i)\|_{0,\tau}
    + h_{\tau}\big(\|R_{1h}(p^i_{h},\phi_{h})\|_{0,\tau}
    + \|R_{2h}(p^i_{h},\phi_{h})\|_{0,\tau} \big) \\
 &\quad\quad\quad\quad\quad\quad\quad +\|\gamma(x,p_h^i)(\widetilde{G}_h\phi_h-\nabla\phi_h)\|_{0,\tau}, \\
 & D_h(\phi_h)= G_h\phi_h - \epsilon(x)\nabla\phi_h, ~~~
   D_h(p_h^i) = G_hp_h^i-\alpha(x,p_h^i)\nabla p_h^i, \\
 & R_{1h}(p_h^i,\phi_h)=\sum_{i=1}^nq^ip_h^i+{\rm div}(G_h\phi_h)+f, \\
 & R_{2h}(p^{i}_{h},\phi_{h})= {\rm div}(G_hp_h^i)
       +{\rm div}\big(\beta(x,p_h^i)\big)-g(x,p_h^i)
       +{\rm div}\big(\gamma(x,p_h^i)\widetilde{G}_h\phi_h\big), \\
 & \widetilde{r}_{p_h^i,\omega_\tau}
    \le C \widetilde{\xi}_{h, \omega_{\tau}} h_{\omega_{\tau}}^{2}
     \Big( |p_h^i|_{1,\omega_l}\|\alpha(x,p_h^i)\|_{2,\infty,\omega_l}
     + |\beta(\cdot,p_h^i)|_{2,\omega_\tau} + |g(\cdot,p_h^i)|_{1,\omega_\tau} +\|\epsilon\|_{2,\infty,\omega_\tau}\|\phi_h\|_{1,\omega_\tau}+ |f|_{1,\omega_\tau}
     \Big), \\
 &\widetilde{\xi}_{h,\omega_\tau}(p_h^i,\phi_h)
    =\max\{ \widetilde{\xi}_{h, \omega_{\tau}}, \xi_{h, \omega_{\tau}} \}, \\
 & \widetilde{\xi}_{h, \omega_{\tau}}=\max _{l \in \partial \mathcal{T}^{h}, l
    \subset\omega_{\tau} \backslash \partial \omega_{\tau}} \widetilde{\xi}_{h, l}, ~~ \widetilde{\xi}_{h, l}=1+h_{l}|\alpha(\cdot,p_h^i)|_{1, \infty, l}
    \left\|\alpha^{-1}(\cdot,p_h^i)\right\|_{0, \infty, l}, \\
 & \xi_{h, \omega_{\tau}}=\max _{l \in \partial \mathcal{T}^{h}, l
    \subset \omega_{\tau} \backslash \partial \omega_{\tau}} \xi_{h, l}, ~~ \xi_{h,l}
    =1+h_{l}|\epsilon(x)|_{1, \infty, l}\|\epsilon(x)^{-1}\|_{0, \infty, l}.
 \end{align*}
\end{theorem}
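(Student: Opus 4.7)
The plan is to bound each of the five ingredients of $\eta_{\tau,p^i}(p_h^i,\phi_h)$ separately, then combine them. Two of the five terms, namely $\|D_h(\phi_h)\|_{0,\tau}$ and $h_\tau\|R_{1h}(p_h^i,\phi_h)\|_{0,\tau}$, are already controlled by Theorem \ref{th-phi-lower}, so only three genuinely new estimates are needed: one for $\|D_h(p_h^i)\|_{0,\tau}$, one for $\|\gamma(x,p_h^i)(\widetilde{G}_h\phi_h-\nabla\phi_h)\|_{0,\tau}$, and one for $h_\tau\|R_{2h}(p_h^i,\phi_h)\|_{0,\tau}$.

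For $\|D_h(p_h^i)\|_{0,\tau}$, I would mirror the argument \eqref{lower-Ghphi0}--\eqref{lower-phi03} used for $\|D_h(\phi_h)\|_{0,\tau}$ in Theorem \ref{th-phi-lower}, replacing the pair $(\epsilon,\phi_h)$ by $(\alpha(\cdot,p_h^i),p_h^i)$. Decomposing $G_hp_h^i-\alpha(x,p_h^i)\nabla p_h^i$ via the nodal values of $\alpha(z,p_h^i(z))\nabla p_h^i$ and walking from a neighboring simplex $\tau'\subset\omega_z$ into $\tau$ through a chain of surfaces $l_k$ produces a sum of jump terms $[\alpha(z,p_h^i)\nabla p_h^i\cdot n_{l_k}]$; using $\widetilde{\xi}_{h,l}=1+h_l|\alpha(\cdot,p_h^i)|_{1,\infty,l}\|\alpha^{-1}(\cdot,p_h^i)\|_{0,\infty,l}$ converts these into multiples of $\|J_{h,l}(p_h^i)\|_{0,l}$, and the Lagrange-interpolation residual contributes an $h_\tau^2|p_h^i|_{1,\tau}\|\alpha(\cdot,p_h^i)\|_{2,\infty,\tau}$ term. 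Plugging Lemma \ref{lem-low-jhp0} into the resulting bound gives the desired estimate in terms of $\|\nabla(p^i-p_h^i)\|_{0,\omega_\tau}$, $\|\nabla(\phi-\phi_h)\|_{0,\omega_\tau}$ and $\sum_i\|p^i-p_h^i\|_{0,\omega_\tau}$, plus a higher-order residual.

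For the term $\|\gamma(x,p_h^i)(\widetilde{G}_h\phi_h-\nabla\phi_h)\|_{0,\tau}$, I would simply pull out the $L^\infty$ bound on $\gamma(\cdot,p_h^i)$ (available from the assumed smoothness of $\gamma$, the $H^1$ bound on $p_h^i$, and $\gamma$ being linear in its second argument), so the factor $\|\widetilde{G}_h\phi_h-\nabla\phi_h\|_{0,\tau}$ inherits exactly the bound \eqref{jh-new-1} established just before Lemma \ref{lem-low-jhp0}. The handling of the remaining term, $h_\tau\|R_{2h}(p_h^i,\phi_h)\|_{0,\tau}$, is the main obstacle because $R_{2h}$ is not piecewise polynomial. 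I would introduce a local piecewise constant/linear approximation $\widetilde{R}_{2h}(p_h^i,\phi_h)|_\tau$, obtained by averaging the non-polynomial ingredients of $R_{2h}$ over $\tau$ while keeping the genuinely piecewise-linear parts (notice that ${\rm div}(\gamma(x,p_h^i)\widetilde{G}_h\phi_h)\in\mathcal{P}^1(\tau)$ since $\gamma$ is linear in its second argument, $p_h^i$ is piecewise linear, and $\widetilde{G}_h\phi_h\in S^h\times S^h$); the approximation error obeys $\|R_{2h}-\widetilde{R}_{2h}\|_{0,\tau}\le Ch_\tau(|\beta(\cdot,p_h^i)|_{2,\tau}+|g(\cdot,p_h^i)|_{1,\tau}+\|\alpha(\cdot,p_h^i)\|_{2,\infty,\tau}|p_h^i|_{1,\tau})$.

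To close the argument, I would test against the bubble function $\mu_\tau\widetilde{R}_{2h}$ in the identity obtained from \eqref{a-ph-Jh-00} on $\tau$ (the boundary jumps drop out because $\mu_\tau\equiv 0$ on $\partial\tau$), invoke Lemma \ref{lem-lower} to trade $\|\mu_\tau\widetilde{R}_{2h}\|_{1,\tau}$ for $h_\tau^{-1}\|\widetilde{R}_{2h}\|_{0,\tau}$, apply the Lipschitz assumption \eqref{lipcond} to bound the $a$- and $b$-differences by $\|p^i-p_h^i\|_{1,\tau}$ and $\|\nabla(\phi-\phi_h)\|_{0,\tau}$, and use the $L^\infty$ bound on $\gamma(\cdot,p_h^i)$ together with \eqref{jh-new-1} to absorb the $\gamma(x,p_h^i)(\widetilde{G}_h\phi_h-\nabla\phi_h)$ contribution. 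The triangle inequality with $\widetilde{R}_{2h}$ then yields the bound for $h_\tau\|R_{2h}\|_{0,\tau}$. Finally, summing the three new estimates with the two estimates imported from Theorem \ref{th-phi-lower} and consolidating the higher-order data terms into $\widetilde{r}_{p_h^i,\omega_\tau}$ and the constants into $\widetilde{\xi}_{h,\omega_\tau}(p_h^i,\phi_h)=\max\{\widetilde{\xi}_{h,\omega_\tau},\xi_{h,\omega_\tau}\}$ produces \eqref{th-ph-lower-0}.
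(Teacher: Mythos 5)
Your proposal is correct and follows essentially the same route as the paper: the bounds for $\|D_h(\phi_h)\|_{0,\tau}$ and $h_\tau\|R_{1h}\|_{0,\tau}$ are imported from Theorem \ref{th-phi-lower}, $\|D_h(p_h^i)\|_{0,\tau}$ is handled by repeating the chain-of-simplices/jump decomposition with $(\alpha(\cdot,p_h^i),p_h^i)$ in place of $(\epsilon,\phi_h)$ and invoking Lemma \ref{lem-low-jhp0}, and $h_\tau\|R_{2h}\|_{0,\tau}$ is treated via the piecewise-polynomial surrogate $\widetilde{R}_{2h}$ tested against $\mu_\tau\widetilde{R}_{2h}$ (the paper derives the needed local identity from \eqref{upper-p-3} and \eqref{upper-p-2} rather than from \eqref{a-ph-Jh-00}, but the resulting relation \eqref{lowph-2} is the same, and your explicit treatment of the $\gamma(x,p_h^i)(\widetilde{G}_h\phi_h-\nabla\phi_h)$ term via \eqref{jh-new-1} is exactly what the paper uses implicitly). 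The only cosmetic difference is that your approximation error for $\widetilde{R}_{2h}$ carries a superfluous $\|\alpha(\cdot,p_h^i)\|_{2,\infty,\tau}|p_h^i|_{1,\tau}$ term, which is unnecessary since ${\rm div}(G_hp_h^i)$ is already piecewise constant, but this is harmless as it is absorbed into $\widetilde{r}_{p_h^i,\omega_\tau}$ anyway.
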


\begin{proof}
 Similar to the proof of Theorem \ref{th-phi-lower}, first from the definition of $G_h$ (see \eqref{Gh-operator-alpha}), we get
 \begin{align*}
  G_hp_h^i-\alpha(x,p_h^i)\nabla p_h^i= \sum_{z\in\partial^2\mathcal{T}^h}\big(\alpha(x,p_h^i)\nabla p_h^i\big)_z\varphi_z-\alpha(x,p_h^i)\nabla p_h^i,
 \end{align*}
 where $\varphi_z$ is the basis function. Thus for any $x\in\tau, \tau\in\mathcal{T}^h$, suppose $\{z_i:i=1,2,3\}$ is the vertex set of $\tau$. Then by the similar arguments as \eqref{lower-Ghphi0}-\eqref{lower-phi03} in the proof of Theorem \ref{th-phi-lower}, we can easily get
 \begin{align} \label{lower-ph-Dhph}
  G_hp_h^i-\alpha(x,p_h^i)\nabla p_h^i
  &= \sum_{i=1}^{3} \varphi_{z_i}(x)\left(\sum_{j=1}^{J_{z_i}}
     \alpha_{z_{i}}^{j}\big(\alpha(z_i,p_h^i(z_i))\nabla p_h^i\big)_{\tau_{z_i}^{j}}\right) - \big(\alpha(x,p_h^i)\nabla p_h^i \big)(x) \notag\\
  &= \sum_{i=1}^{3} \varphi_{z_{i}}(x) \sum_{j=1}^{J_{z_{i}}}
    \alpha_{z_{i}}^{j}\left(\left(\alpha\left(z_{i}, p_h^i\left(z_{i}\right)\right) \nabla p_h^i\right)_{\tau_{z_i}^{j}}-\left(\alpha\left(x, p_h^i\right) \nabla p_h^i\right)_{\tau}\right) \notag\\
  &= \sum_{i=1}^{3} \varphi_{z_{i}}(x) \sum_{j=1}^{J_{z_{i}}} \alpha_{z_{i}}^{j}\left(\left(\alpha\left(z_{i}, p_h^i(z_i)\right) \nabla p_h^i\right)_{\tau_{z_i}^{j}} - \left(\alpha\left(z_{i}, p_h^i(z_i)\right) \nabla p_h^i\right)_{\tau}\right) \notag\\
  &\quad +\left(\sum_{i=1}^{3} \varphi_{z_{i}}(x)\left(\alpha\left(z_{i}, p_h^i(z_i)\right)\right)_{\tau} - \alpha(x, p_h^i)\right)\left(\nabla p_h^i \right)_{\tau}.
 \end{align}

 It is easy to know we can find a cluster of simplices $\tau', \tau_{1}, \cdots, \tau_{K}, \tau \in \omega_{z}$, such that $\overline{\tau}_{k} \cap \overline{\tau}_{k+1}= l_{k} \in \partial \mathcal{T}^{h}~(k=0,1, \cdots, K+1)$, where $\tau_0=\tau'$ and $\tau_{K+1}=\tau$. Thus  \begin{align*}
 &\big(\alpha(z_i,p_h^i(z_i)) \nabla p_h^i\big)_{\tau^{\prime}}
     -\big(\alpha(z_i,p_h^i(z_i))\nabla p_h^i\big)_{\tau} \\
 &=\sum_{k=0}^{K}\left(\big(\alpha(z_i,p_h^i(z_i)) \nabla p_h^i\big)_{\tau_{k}}
   -\big(\alpha(z_i,p_h^i(z_i)) \nabla p_h^i\big)_{\tau_{k+1}}\right) \\
 &=\sum_{k=0}^{K}\left(\big(\alpha(z_i,p_h^i(z_i) \nabla p_h^i\big)_{\tau_{k}}\cdot
   n_{l_{k}}-\big(\alpha(z_i,p_h^i(z_i)) \nabla p_h^i\big)_{\tau_{k+1}} \cdot n_{l_{k}}\right) n_{l_k}
\end{align*}
 or
 \begin{equation} \label{lower-ph-Dhph-1}
  \big(\alpha(z_i,p_h^i(z_i)) \nabla p_h^i\big)_{\tau^{\prime}}-\big(\alpha(z_i,p_h^i(z_i)) \nabla p_h^i\big)_{\tau}=\sum_{k=0}^{K}\left[\alpha(z_i,p_h^i(z_i)) \nabla p_h^i \cdot n_{l_{k}}\right] n_{l_{k}},
 \end{equation}
 where $n_{l_k}$ is the unit normal vector to $l_k$. If $z$ is a vertex of $l_k$, then
 \begin{align} \label{lower-ph-Dhph-2}
 & \Big\|\left[\alpha(z,p_h^i(z)) \nabla p_h^i \cdot
         n_{l_{k}}\right]\Big\|_{0, l_{k}} \notag\\
 &\le \Big\|[\alpha(\cdot,p_h^i) \nabla p_h^i \cdot n_{l_{k}}]\Big\|_{0,l_{k}} +\Big\|\left[\big(\alpha(\cdot,p_h^i)-\alpha(z,p_h^i(z))\big) \alpha^{-1}(\cdot,p_h^i) \alpha(\cdot,p_h^i) \nabla p_h^i \cdot n_{l_{k}}\right]\Big\|_{0, l_{k}}  \notag\\
 &\le C \widetilde{\xi}_{h, l_k}
      \Big\|\left[\alpha(\cdot,p_h^i) \nabla p_h^i \cdot n_{l_{k}}\right]\Big\|_{0, l_{k}},
 \end{align}
 where
 $\widetilde{\xi}_{h, l_k}=1+h_{l_k}|\alpha(\cdot,p_h^i)|_{1, \infty, l_k}\left\|\alpha^{-1}(\cdot,p_h^i)\right\|_{0, \infty, l_k}$.

 Obviously, we know that $\sum\limits_{i=1}^{3} \varphi_{z_{i}}(x)\big(\alpha(z_i,p_h^i(z_i))\big)_{\tau}$ is the Lagrange interpolation of $\alpha(x,p_h^i)$ and
 \begin{align} \label{lower-ph-Dhph-3}
  \left\|\left(\sum_{i=1}^{3}
   \varphi_{z_{i}}(x)\big(\alpha(z_i,p_h^i(z_i))\big)_{\tau}
   -\alpha(\cdot,p_h^i)\right) \nabla p_h^i \right\|_{0, \tau}
  \le C h_{\tau}^{2} |\alpha(\cdot,p_h^i)|_{2,\infty, \tau} |p_h^i|_{1,\tau}.
 \end{align}
 Then from \eqref{lem-low-jhp0-0} and \eqref{lower-ph-Dhph}-\eqref{lower-ph-Dhph-3}, we get
 \begin{align} \label{lower-ph-Dhph-4}
  \left\|D_h(p_h^i)\right\|_{0, \tau}
  &\le C  \left(\sum_{i=1}^{3} \sum_{l \in \partial \mathcal{T}^{h}, l
       \subset\omega_{z_{i}}} \xi_{h,l}h_l^{\frac 1 2}\|[\alpha(\cdot,p_h^i)\nabla p_h\cdot n_l]\|_{0,l}+ h_{\tau}^{2} |\alpha(\cdot,p_h^i)|_{2,\infty,\tau}|p_h^i|_{1,\tau}
       \right)\notag\\
 &\le C \widetilde{\xi}_{h, \omega_{\tau}}
        \Big( \|\nabla(p^i-p_h^i)\|_{0,\omega_\tau}
        + h_{\omega_\tau}\sum\limits_{i=1}^n\|p^i-p_h^i\|_{0,\omega_\tau}
        + \|\nabla(\phi-\phi_h)\|_{0,\omega_\tau} + r_{p_h^i,\omega_\tau} \Big)\notag\\
 &\quad + C h_{\tau}^{2} |\alpha(\cdot,p_h^i)|_{2,\infty,\tau}|p_h^i|_{1,\tau}
      \notag\\
 &\le C \widetilde{\xi}_{h, \omega_{\tau}}
     \Big( \|\nabla(p^i-p_h^i)\|_{0,\omega_\tau}
     + h_{\omega_\tau}\sum\limits_{i=1}^n\|p^i-p_h^i\|_{0,\omega_\tau}
     + \|\nabla(\phi-\phi_h)\|_{0,\omega_\tau} \Big)
     + \widetilde{r}_{p_h^i,\omega_\tau},
 \end{align}
 where
 \begin{align*}
  &\widetilde{r}_{p_h^i,\omega_\tau}
   \le C \widetilde{\xi}_{h, \omega_{\tau}} h_{\omega_{\tau}}^{2}
         \Big( \|\alpha(\cdot,p_h^i)\|_{2,\infty,\omega_\tau}\|p_h^i\|_{1,\omega_\tau}
        + |\beta(\cdot,p_h^i)|_{2,\omega_\tau} + |g(\cdot,p_h^i)|_{1,\omega_\tau}+\|\epsilon\|_{2,\infty,\omega_\tau}\|\phi_h\|_{1,\omega_\tau} +|f|_{1,\omega_\tau} \Big), \\
 &\widetilde{\xi}_{h, \omega_{\tau}}=\max _{l \in \partial \mathcal{T}^{h}, l \subset \omega_{\tau} \backslash \partial \omega_{\tau}} \widetilde{\xi}_{h, l}, ~~ \widetilde{\xi}_{h, l}=1+h_{l}|\alpha(\cdot,p_h^i)|_{1, \infty, l}\left\|\alpha^{-1}(\cdot,p_h^i)\right\|_{0, \infty, l}.
 \end{align*}

 Now we turn to estimate $\|R_{2h}(p^i_{h},\phi_{h})\|_{0,\tau}$. Define
 \begin{align*}
  \widetilde{R}_{2h}(p^i_{h},\phi_{h})|_\tau
  ={\rm div}(G_hp_h^i)|_\tau+{\rm div}\big(\gamma(x,p_h^i)\widetilde{G}_h\phi_h\big)|_\tau
       +\frac{1}{|\tau|}\int_\tau{\rm div}\big(\beta(x,p_h^i)\big)
       -\frac{1}{|\tau|}\int_\tau g(x,p_h^i).
 \end{align*}
 Since $\gamma(x,p_h^i)$ is assumed to be a linear function with respect to $p_h^i$, it is easy to see that $\widetilde{R}_{2h}(p^i_{h},\phi_{h}) \in \mathcal{P}^1(\tau)$ and
 \begin{align} \label{lower-R2htau}
  \|R_{2h}(p^i_{h},\phi_{h})-\widetilde{R}_{2h}(p^i_{h},\phi_{h})\|_{0,\tau}
  \le C h_\tau\big(|\beta(x,p_h^i)|_{2,\tau}
         +|g(x,p_h^i)|_{1,\tau}\big).
 \end{align}
 For any $v\in H_0^1(\Omega)$, taking $\chi=0$ in \eqref{upper-p-3}, we have
 \begin{align} \label{aeqn}
 a'(p_h^i;p^i-p_h^i,v)
 &= \big( D_h(p_h^i), \nabla v \big)
    + \big( R_{2h}(p_h^i,\phi_h), v \big) \notag \\
 &\quad + \big( \gamma(x,p_h^i)(\widetilde{G}_h\phi_h-\nabla\phi_h),
                \nabla v \big) - R(p_h^i,\phi_h,p^i,\phi,v),
 \end{align}
 where $$ R_{2h}(p^{i}_{h},\phi_{h})= {\rm div}(G_hp_h^i)
       +{\rm div}\big(\beta(x,p_h^i)\big)-g(x,p_h^i)
       +{\rm div}\big(\gamma(x,p_h^i)\widetilde{G}_h\phi_h\big). $$
 It follows from \eqref{upper-p-2} and \eqref{aeqn} that
 \begin{align} \label{lowph-2}
  \big( \widetilde{R}_{2h}(p^i_{h},\phi_{h}),v \big)_\tau
  &= (R_{2h}(p^i_{h},\phi_{h}),v)_\tau-\big( R_{2h}(p^i_{h},\phi_{h})-\widetilde{R}_{2h}(p^i_{h},\phi_{h}),v \big)_\tau\notag\\
   & = a'(p_h^i;p^i-p_h^i,v)_\tau - \big( D_h(p_h^i),\nabla v \big)_\tau
      -\big( R_{2h}(p^i_{h},\phi_{h})-\widetilde{R}_{2h}(p^i_{h},\phi_{h}),v \big)_\tau  \notag \\
 &\quad -\big( \gamma(x,p_h^i)(\widetilde{G}_h\phi_h-\nabla\phi_h),\nabla v \big)_\tau + R(p_h^i,\phi_h,p^i,\phi,v)_\tau \notag\\
 &=-a(p_h^i,v)_\tau-b(p_h^i,\phi_h,v)_\tau-\big( D_h(p_h^i),\nabla v \big)_\tau\notag\\
 &\quad -\big( R_{2h}(p^i_{h},\phi_{h})-\widetilde{R}_{2h}(p^i_{h},\phi_{h}),v \big)_\tau-\big( \gamma(x,p_h^i)(\widetilde{G}_h\phi_h-\nabla\phi_h),\nabla v \big)_\tau.
 \end{align}
 Then by using the similar arguments as in \eqref{bound}, we have
 \begin{align} \label{lowph-3}
 \big( \widetilde{R}_{2h}(p^i_{h},\phi_{h}),v \big)_\tau
 &\le C \big(\|p^i-p_h^i\|_{1,\tau}\|v\|_{1,\tau}
        +\|D_h(p_h^i)\|_{0,\tau}\|v\|_{1,\tau}
        +\|R_{2h}(p^i_{h},\phi_{h})-\widetilde{R}_{2h}(p^i_{h},\phi_{h})\|_{0,\tau}
          \|v\|_{0,\tau} \notag\\
 &\quad + \|\widetilde{G}_h\phi_h-\nabla\phi_h\|_{0,\tau}
          \|v\|_{1,\tau} + \|\nabla(\phi-\phi_h)\|_{0,\tau}\|v\|_{1,\tau} \big).
 \end{align}
 Setting $v=\mu_\tau\widetilde{R}_{2h}(p^i_{h},\phi_{h})$ in \eqref{lowph-3} and by Lemma \ref{lem-lower}, it yields
\begin{align*} 
 \|\widetilde{R}_{2h}(p^i_{h},\phi_{h})\|^2_{0,\tau}
 &\le C \big(\widetilde{R}_{2h}(p^i_{h},\phi_{h}),
              \mu_\tau\widetilde{R}_{2h}(p^i_{h},\phi_{h}) \big)_\tau    \notag \\
 &\le C \big( h_\tau^{-1}\|p^i-p_h^i\|_{1,\tau}
        + h_\tau^{-1}\|D_h(p_h^i)\|_{0,\tau}  +\|R_{2h}(p^i_{h},\phi_{h})-\widetilde{R}_{2h}(p^i_{h},\phi_{h})\|_{0,\tau}
           \notag\\
 &\quad + h_\tau^{-1}\|\widetilde{G}_h\phi_h-\nabla\phi_h\|_{0,\tau}
        +  h_\tau^{-1}\|\nabla(\phi-\phi_h)\|_{0,\tau} \big)
            \|\widetilde{R}_{2h}(p^i_{h},\phi_{h})\|_{0,\tau} .
 \end{align*}
 Hence,
 \begin{align} \label{lowph-5}
 \|\widetilde{R}_{2h}(p^i_{h},\phi_{h})\|_{0,\tau}
 &\le C \big( h_\tau^{-1}\|p^i-p_h^i\|_{1,\tau}
        +  h_\tau^{-1}\|D_h(p_h^i)\|_{0,\tau}
        + \|R_{2h}(p^i_{h},\phi_{h})-\widetilde{R}_{2h}(p^i_{h},\phi_{h})\|_{0,\tau} \notag\\
 &\quad  +  h_\tau^{-1}\|\widetilde{G}_h\phi_h-\nabla\phi_h\|_{0,\tau}
         +  h_\tau^{-1}\|\nabla(\phi-\phi_h)\|_{0,\tau} \big).
 \end{align}
 Then by \eqref{lower-R2htau} and \eqref{lowph-5}, we obtain
 \begin{align*} 
  \|R_{2h}(p^i_{h},\phi_{h})\|_{0,\tau}
  &\le \|\widetilde{R}_{2h}(p^i_{h},\phi_{h})\|_{0,\tau}
      + \|R_{2h}(p^i_{h},\phi_{h})
      - \widetilde{R}_{2h}(p^i_{h},\phi_{h})\|_{0,\tau} \notag\\
  &\le C \big( h_\tau^{-1}\|p^i-p_h^i\|_{1,\tau}
        + h_\tau^{-1}\|D_h(p_h^i)\|_{0,\tau}
        + h_\tau\big(|\beta(x,p_h^i)|_{2,\tau}
        + |g(x,p_h^i)|_{1,\tau}\big) \notag\\
  &\quad + h_\tau^{-1}\|\widetilde{G}_h\phi_h-\nabla\phi_h\|_{0,\tau}
        + h_\tau^{-1}\|\nabla(\phi-\phi_h)\|_{0,\tau} \big)
 \end{align*}
 or
 \begin{align} \label{lowph-6-0}
 h_\tau\|R_{2h}(p^i_{h},\phi_{h})\|_{0,\tau}
 &\le C \big( \|p^i-p_h^i\|_{1,\tau}
        +  \|D_h(p_h^i)\|_{0,\tau}
        + h_\tau^2 \big( |\beta(x,p_h^i)|_{2,\tau} + |g(x,p_h^i)|_{1,\tau} \big) \notag\\
 &\quad + \|\widetilde{G}_h\phi_h-\nabla\phi_h\|_{0,\tau}
        + \|\nabla(\phi-\phi_h)\|_{0,\tau} \big).
 \end{align}
 Combining \eqref{lower-ph-Dhph-4} and \eqref{lowph-6-0}, and using \eqref{jh-new-1}, we get
 \begin{align} \label{lowph-7}
 h_\tau\|R_{2h}(p^i_{h},\phi_{h})\|_{0,\tau}
 &\le C \widetilde{\xi}_{h, \omega_{\tau}}
     \Big( \|\nabla(p^i-p_h^i)\|_{0,\omega_\tau}
    + \|\widetilde{G}_h\phi_h-\nabla\phi_h\|_{0,\tau}
    + \|\nabla(\phi-\phi_h)\|_{0,\omega_\tau} \notag\\
 &\quad   + \sum\limits_{i=1}^n\|p^i-p_h^i\|_{0,\omega_\tau} \Big)
    + \widetilde{r}_{p_h^i,\omega_\tau} \notag\\
 &\le C \widetilde{\xi}_{h, \omega_{\tau}}
     \Big( \|\nabla(p^i-p_h^i)\|_{0,\omega_\tau}
    + \|\nabla(\phi-\phi_h)\|_{0,\omega_\tau}
    + \sum\limits_{i=1}^n\|p^i-p_h^i\|_{0,\omega_\tau} \Big)
         + \widetilde{r}_{p_h^i,\omega_\tau}.
 \end{align}

 Therefore, the desired estimate \eqref{th-ph-lower-0} is obtained from \eqref{th-phi-lower-0}, \eqref{lower-ph-Dhph-4} and \eqref{lowph-7}. $\hfill\Box$
\end{proof}

 Similar to Remark \ref{Remark-upper}, we have the following results.
\begin{remark} \label{Remark-lower}
 If $\|p^i-p_h^i\|_{0,\omega_\tau}\le Ch_{\omega_\tau}^2$, then from Theorems
  \ref{th-phi-lower} and \ref{th-ph-lower}, it yields
 \begin{align*}
  \eta_{\tau,\phi}(p^i_{h},\phi_{h})
  \le C\big(\|\nabla(\phi-\phi_h)\|_{0,\omega_{\tau}} + h_{\omega_\tau}^2 \big),
 \end{align*}
 and
 \begin{align*}
  \eta_{\tau,p^i}(p^i_{h},\phi_{h})
  \le C\big(\|\nabla(p^i-p^i_{h})\|_{0,\omega_{\tau}}
      + \|\nabla(\phi-\phi_h)\|_{0,\omega_{\tau}}
      +  h_{\omega_\tau}^2 \big).
 \end{align*}
\end{remark}

\subsection{Adaptive algorithm} \label{sec-adaptive}

\noindent
 In this subsection, we describe a typical adaptive finite element algorithm based on the a posteriori error estimators derived above.

 For $\tau\in \mathcal{T}^h$, we denote the local error indicators for the electrostatic potential and concentrations respectively by
 \begin{align} \label{adp-indi-phih}
    \eta_{\tau,\phi}(p^i_{h},\phi_h)
  &= \|D_h(\phi_h)\|_{0,\tau}
      +h_\tau\|R_{1h}(p^i_{h},\phi_h)\|_{0,\tau}, \\
       \notag \\
    \eta_{\tau,p^i}(p_h^i,\phi_h)
  &= \|D_h(p_h^i)\|_{0,\tau} + \|D_h(\phi_h)\|_{0,\tau}
       + \|\gamma(x,p_h^i)(\widetilde{G}_h\phi_h-\nabla\phi_h)\|_{0,\tau} \notag \\
  &\quad +h_\tau\big(\|R_{1h}(p^i_{h},\phi_{h})\|_{0,\tau}
         +\|R_{2h}(p^i_{h},\phi_{h})\|_{0,\tau}\big),
         \label{adp-indi-ph-up}
 \end{align}
 where
 \begin{align*}
 &D_{h}(\phi_{h})=  G_{h}\phi_{h} - \epsilon(x)\nabla\phi_{h}, ~~~
  D_h(p_h^i)= G_hp_h^i - \alpha(x,p_h^i)\nabla p_h^i,  \\
 &R_{1h}(p^{i}_{h},\phi_{h})=\sum_{i=1}^nq^ip^i_{h}
       +{\rm div}(G_{h}\phi_h)+f, \\
 &R_{2h}(p^{i}_{h},\phi_{h})= {\rm div}(G_hp_h^i)
       +{\rm div}\big(\beta(x,p_h^i)\big)-g(x,p_h^i)
       +{\rm div}\big(\gamma(x,p_h^i)\widetilde{G}_h\phi_h\big).
 \end{align*}

 Given an initial conforming mesh $\mathcal{T}^h$, an associated finite element space $S_0^h$ and a tolerance $TOL$, the typical adaptive finite element algorithm is then designed as follows:
\begin{algorithm}[H]
 \caption{Adaptive Computing for nonlinear PNP equations }
  \label{adaptive-algorithm}
   \begin{itemize}
     \item Step 1: Computing the finite element solution  \\
         Find the finite element solution $p_h^i,~ i=1,2\cdots,n $ and $\phi_h\in S_0^h$.
     \item Step 2: Error estimation \\
         Compute the local error indicators $\eta_{\tau,\phi}$ and $\eta_{\tau,p^i}$ by \eqref{adp-indi-phih} and \eqref{adp-indi-ph-up} respectively for all $\tau\in \mathcal{T}^h$.
     \item Step 3: Local refinement  \\
         If $\Big(\sum\limits_{\tau\in\mathcal{T}^h}
         \eta^2_{\tau,\phi}\Big)^{\frac{1}{2}} > TOL$ or $\Big(\sum\limits_{\tau\in\mathcal{T}^h}
         \eta^{2}_{\tau,p^i}\Big)^{\frac{1}{2}} > TOL$, then refine those elements which satisfy
         $\eta_{\tau,\phi}\ge \theta\max\limits_{\tau\in \mathcal{T}^h}\eta_{\tau,\phi}$ or $\eta_{\tau,p^i}\ge \theta\max\limits_{\tau\in \mathcal{T}^h}\eta_{\tau,p^i}$  with $\theta\in(0,1)$ is a given refinement parameter.
     \item Step 4: Generating a new mesh \\
          Generate a new mesh $\mathcal{T}^h$, a space $S_0^h$ and return to Step 1. Otherwise, the computation is terminated.
  \end{itemize}
\end{algorithm}
In our computations, we follow the refining strategies in \cite{R.Ver1994,R.Ver1996} for two dimensions to obtain a new conforming mesh and choose the refinement parameter $\theta = 0.5$.

\setcounter{equation}{0}
\section{Numerical experiments} \label{sec-Numer-experiments}

\noindent
 In this section, we will report the numerical results to illustrate the theoretical results obtained in Section \ref{sec-posteriori}. First, the true errors of the finite element solutions and the error estimators are compared both on the uniform meshes and the adaptive meshes for a nonlinear PNP model with a smooth solution. Then an example with a singular solution is reported to show the efficiency of the adaptive computation proposed in this paper.

 Denote by $\eta_{\phi}$, $ \eta_{p^{1}}$ and $ \eta_{p^{2}}$ the a posteriori error estimators for the electrostatic potential $ \phi$, the positive ion concentration $p^1$ and the negative ion concentration $p^2$, respectively.  Let $\mathcal{T}^h=\{\tau\}$ be a shape-regular mesh of $ \Omega$ with mesh size $ h>0$ and $ \tau$ be the element.
 Define
 \begin{align*}
  e_\phi&=\|\phi-\phi_h\|_{1,\Omega}, ~~~~~~~~~~ e_{p^i}=\|p^i-p_h^i\|_{1,\Omega}, \\
  \eta_\phi&=\left(\sum\limits_{\tau\in \mathcal{T}^h}\eta^2_{\tau,\phi}\right)^{1/2},~~~~~
  \eta_{p^i}=\left(\sum\limits_{\tau\in \mathcal{T}^h}\eta^2_{\tau,p^i}\right)^{1/2},
 \end{align*}
 where $\eta_{\tau,\phi}$ and $\eta_{\tau,p^i}$ are defined in \eqref{adp-indi-phih} and \eqref{adp-indi-ph-up}, respectively. In particular, in the following, we use symbols $e_{u,\phi}$ and $e_{u,p^i}$ represent the errors on uniform meshes, and $e_{a,\phi}, e_{a,p^i}$ represent the errors on adaptive meshes for the electrostatic potential and concentrations, respectively. Correspondingly, the symbols $\eta_{u,\phi}, \eta_{u,p^i}$ are used to denote the error estimators on uniform meshes, and $\eta_{a,\phi}, \eta_{a,p^i}$ denote the error estimators on adaptive meshes, respectively.


\begin{figure}[H]
  \centerline{
  \includegraphics[height=6.2cm,width=18.0cm]
    {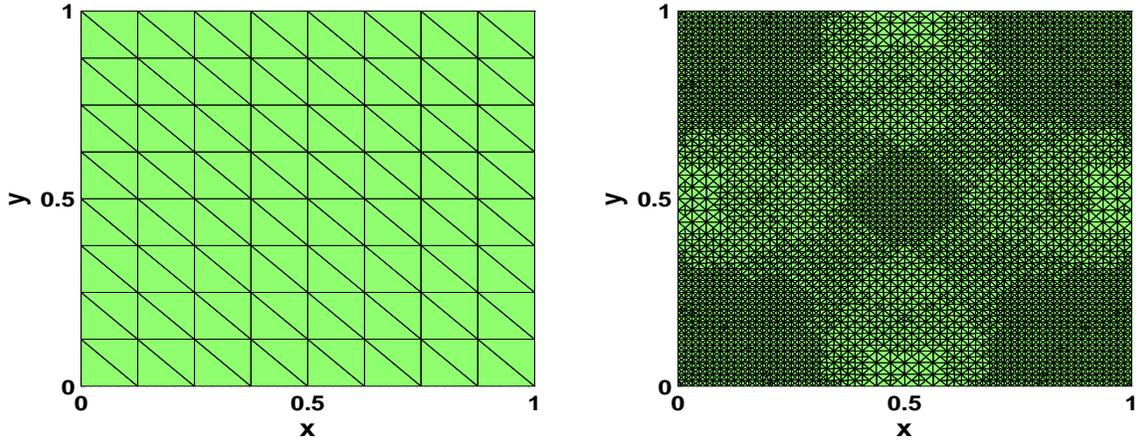}   }
  \caption{The left figure is the initial uniform mesh with 81 degrees of freedom and the right one is an adaptive mesh with 6,253 degrees of freedom constructed by the error indicators $\eta_{\tau,\phi}$ and $\eta_{\tau,p^i}$ for Example \ref{exam2d_MNPNP-exact}. }
  \label{sech2d-smooth_mesh_ini81_ada6253}
\end{figure}

\begin{example}\label{exam2d_MNPNP-exact}
 Consider the following steady-state nonlinear PNP equations, which is a simplified form of the PNP equations for ion channel (cf. \cite{Y.K.Hyon2014})
 \begin{align} \label{MNPNP-exact-sech2p}
  \left\{\begin{array}{l}
  -\nabla\cdot\big(\nabla p^i + p^i\nabla({\rm sech}^2p^i) +q^ip^i\nabla\phi \big) = f_i,
   ~~{\rm in}~ \Omega,~~i=1,2,  \\
  -\Delta\phi-\displaystyle{\sum_{i=1}^2}q^ip^i=f_3,
   ~~{\rm in}~ \Omega.
  \end{array}\right.
 \end{align}
 Here the computational domain $\Omega=[0,1]^2\subset \mathbb{R}^2$, $q^1=1$ and $q^2=-1$. The boundary condition and the right-hand side functions are chosen such that the exact solution $(\phi,p^1,p^2)$ is given by
 \begin{align} \label{mpnp-exact_sin}
  \left\{\begin{array}{ll}
  \phi=\sin(\pi x) \sin(\pi y), \\
  p^1=\sin(2\pi x) \sin(2\pi y), \\
  p^2=\sin(3\pi x) \sin(3\pi y).
 \end{array}\right.
 \end{align}
\end{example}
 First by a simple calculation, the first equation in \eqref{MNPNP-exact-sech2p} can be rewritten as
 {\setlength\abovedisplayskip{15pt}
\setlength\belowdisplayskip{15pt}  
 \bea \label{mpnp-changed-exam}
  \mathcal{L}(p^i,\phi)\equiv -\nabla\cdot\big((1-2p^i\tanh p^i {\rm sech}^2p^i)\nabla p^i
  +q^ip^i\nabla\phi \big) - f_i =0, ~~\mbox{in}~~ \Omega,~~i=1,2.
 \eea
 Then from \eqref{MPNP-model} and \eqref{mpnp-changed-exam}, we see that $\alpha(x, p^i) = 1-2p^i\tanh p^i {\rm sech}^2p^i, ~ \beta(x,p^i) = 0, ~\gamma(x,p^i)= q^ip^i, ~g(x,p^i)= -f_i$, $\epsilon(x) = 1$, and $f=f_3$ in this example. In addition, by \eqref{mpnp-exact_sin}, we know that $p^i\in[0,1]$, so that $\tanh p^i{\rm sech}^2p^i < \frac{1}{2}$ and
 $\alpha(x,p^i)\ge 0$. Hence the assumption \eqref{elliptic-condition} is satisfied, which indicates that $\mathcal{L}'(p,\phi)$ is isomorphic. According to Lax-Milgram theorem, it follows that the solution $(p^i,\phi)$ is unique.

 \vspace{-3em} 

 This example is mainly used to verify the reliability of the error indicators. The initial uniform mesh and an adaptive mesh constructed by the error indicators $\eta_{\tau,\phi}$ and $\eta_{\tau,p^i}$ for Example \ref{exam2d_MNPNP-exact} are shown in Fig. \ref{sech2d-smooth_mesh_ini81_ada6253}. The numerical results on the uniform meshes and the adaptive meshes for the electrostatic potential $\phi$, the concentrations $p^1$ and $p^2$ are presented in Figs. \ref{sech-smooth-phi}, \ref{sech-smooth-p1} and \ref{sech-smooth-p2}, respectively. It is apparent from Fig. \ref{sech-smooth-phi} that the a posterior error estimators of the electrostatic potential $\phi$ approximate the true errors as the increase of the degrees of freedom both on the uniform meshes and the adaptive meshes. On the other hand, it is also shown that the error curves of the electrostatic potential keep the quasi-optimal convergence order (since the error curves are parallel to the quasi-optimal convergence curve with slope of $-\frac{1}{2}$), which verifies the theoretical results shown in Lemma \ref{the-phi-femH1} and Theorem \ref{theo-upper-phi}. Similarly, for the concentrations $p^1$ and $p^2$, we can get the similar results, see Figs. \ref{sech-smooth-p1} and \ref{sech-smooth-p2}.

\begin{figure}[H]
  \centerline{
  \includegraphics[height=7.5cm,width=18.0cm]
    {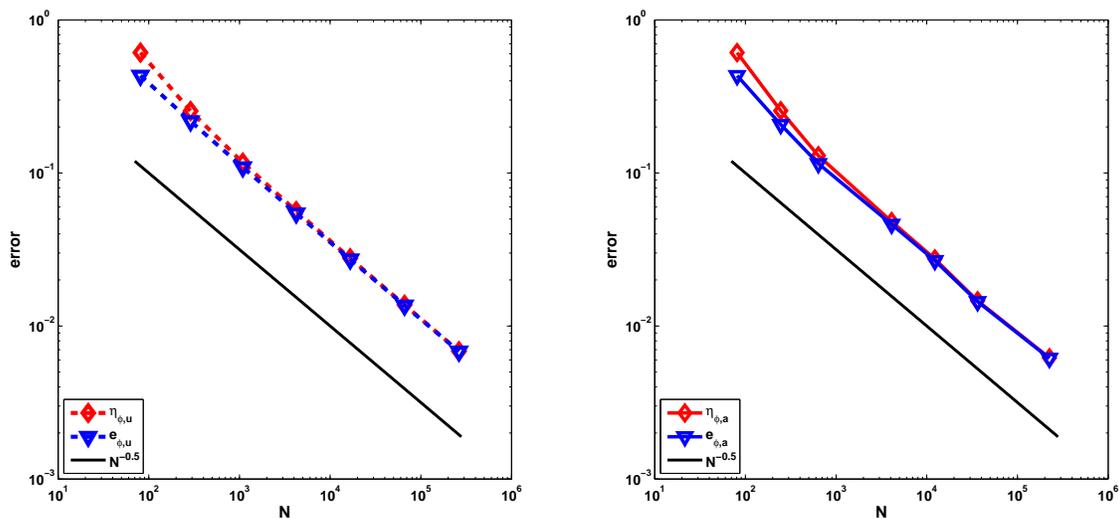}   }
  \caption{The error indicators and $H^1$ norm errors of the electrostatic potential $\phi$ on uniform meshes and adaptive meshes for Example \ref{exam2d_MNPNP-exact}. The black solid line is a quasi-optimal convergence curve with slope $-\frac{1}{2}$ and $N$ is the number of degrees of freedom. }
  \label{sech-smooth-phi}
\end{figure}

\begin{figure}[H]
  \centerline{
  \includegraphics[height=7.5cm,width=18.0cm]
    {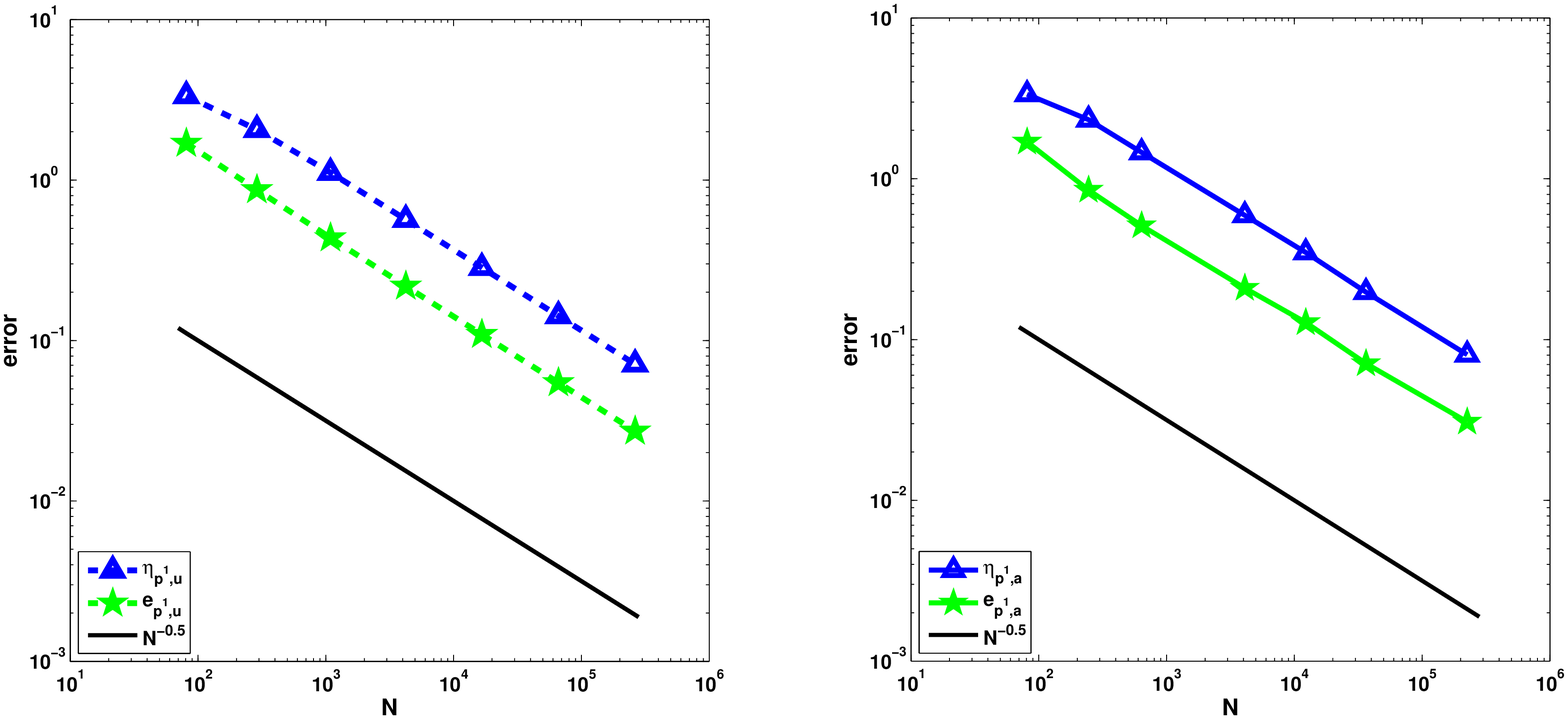}   }
   \caption{The error indicators and $H^1$ norm errors of the positive ion concentration $p^1$ on uniform meshes and adaptive meshes for Example \ref{exam2d_MNPNP-exact}.  The black solid line is a quasi-optimal convergence curve with slope $-\frac{1}{2}$ and $N$ is the number of degrees of freedom. }
   \label{sech-smooth-p1}
\end{figure}

\begin{figure}[H]
  \centerline{
  \includegraphics[height=7.5cm,width=18.0cm]
    {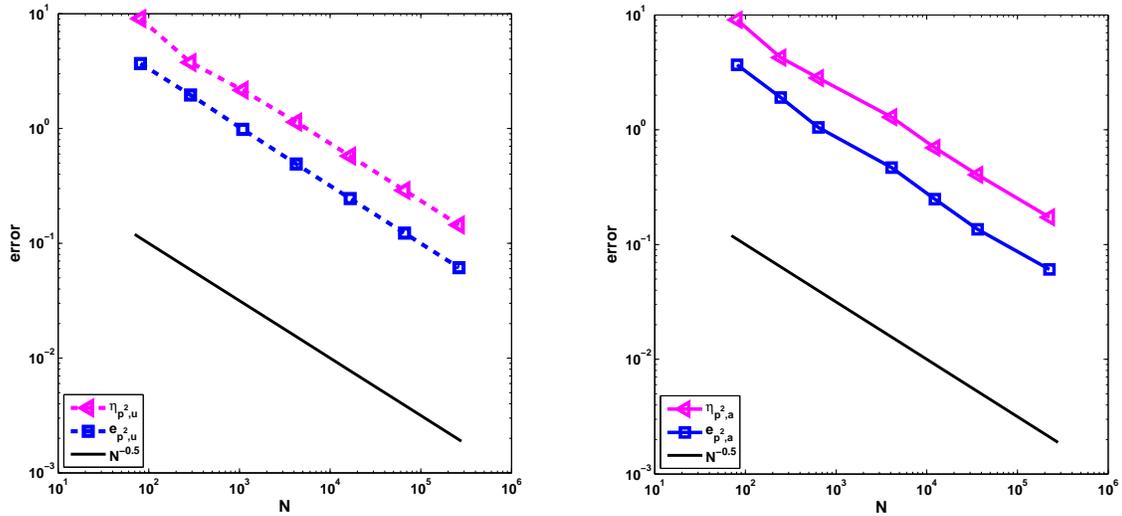}   }
   \caption{The error indicators and $H^1$ norm errors of the negative ion concentration $p^2$ on uniform meshes and adaptive meshes for Example \ref{exam2d_MNPNP-exact}. The black solid line is a quasi-optimal convergence curve with slope $-\frac{1}{2}$ and $N$ is the number of degrees of freedom. }
   \label{sech-smooth-p2}
\end{figure}

In the above, we have presented the example with a smooth solution to verify the reliability of the a posteriori error indicators. In the following, we consider another example of which the exact solution has a strong singularity at the point $(x_0,y_0) = (0,0)$.

\begin{example}\label{exam2dPNP-phisingular-p3}
 Consider the following nonlinear PNP equations with a singular point $(0,0)$:
 \begin{align} \label{2dPNP-singular-equation-p3}
  \left\{\begin{array}{l}
  -\nabla\cdot\left(\nabla p_i+q_ip_i\nabla\phi \right) + p_i^3=f_i,
   ~~{\rm in}~~ \Omega,~~i=1,2,  \\
  -\Delta\phi-\displaystyle{\sum_{i=1}^2}q_ip_i=f_3,
   ~~{\rm in}~~\Omega,
  \end{array}\right.
 \end{align}
 where $\Omega=[0,1]^2\subset \mathbb{R}^2$ and $q_1=1,~q_2=-1$. Compared \eqref{2dPNP-singular-equation-p3} with \eqref{MPNP-model}, it is seen that $\alpha(x, p^i) = 1, \beta(x,p^i) = 0, \gamma(x,p^i)= q^ip^i, g(x,p^i)= p_i^3-f_i$, $\epsilon(x) = 1$ and $f=f_3$. The boundary condition and the right-hand side functions are chosen such that the exact solution $(\phi,p_1,p_2)$ is given by
 \begin{align} \label{2dpnp-exact-p3}
  \left\{\begin{array}{ll}
  \phi~=(x^2+y^2)^{0.1}, \\
   p_1=\frac{\sin(2\pi x) \sin(2\pi y)} {2x^2 + 2y^2}, \\
   p_2=\frac{\sin(3\pi x) \sin(3\pi y)} {2x^2 + 2y^2}.
 \end{array}\right.
 \end{align}
\end{example}

\begin{figure}[H]
  \centerline{
  \includegraphics[height=4.5cm,width=18.0cm]
    {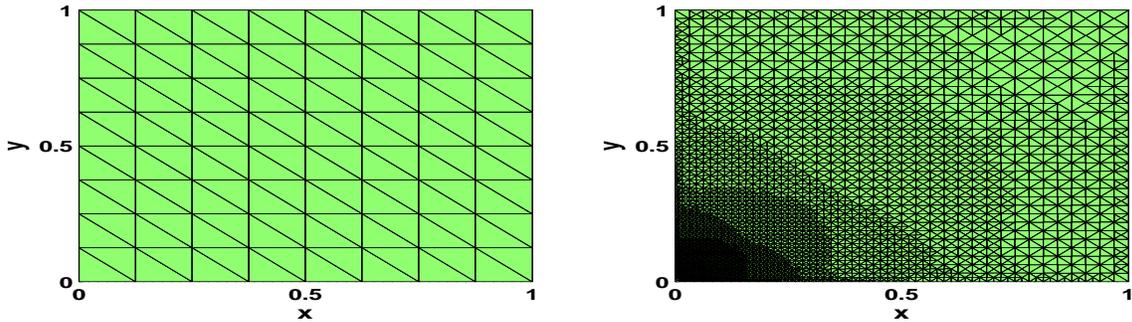}   }
  \caption{The left figure is the initial uniform mesh with 81 degrees of freedom and the right one is an adaptive mesh with 7,432 degrees of freedom for Example \ref{exam2dPNP-phisingular-p3}. }
  \label{p3-mesh2d-mpnp}
\end{figure}

 Fig. \ref{p3-mesh2d-mpnp} shows the initial uniform mesh with 81 degrees of freedom (left) and an adaptive mesh with 7,432 degrees of freedom (right). It is shown by Fig. \ref{p3-mesh2d-mpnp} that the adaptive mesh is locally refined near the origin which coincides with the position of the singularity at the point $(0,0)$.

 The numerical results of the electrostatic potential $\phi$ and the concentrations $p_1,~p_2$ on uniform meshes and adaptive meshes are presented in Figs. \ref{err2d-p3-uniada_phi}, \ref{err2d-p3-uniada_p1} and \ref{err2d-p3-uniada_p2}, respectively. It is observed from Figs. \ref{err2d-p3-uniada_phi}, \ref{err2d-p3-uniada_p1} and \ref{err2d-p3-uniada_p2} that the errors on adaptive meshes (solid line) are much less than that on uniform meshes (dashed line), which indicates the efficiency of Algorithm \ref{adaptive-algorithm}. For example, for the electrostatic potential $\phi$, it is shown in Fig. \ref{err2d-p3-uniada_phi} that the error value $e_{\phi} \le 0.086$ achieved with about 130 degrees of freedom on the adaptive mesh. However, it costs about 260,000 degrees of freedom on the uniform mesh to achieve the same accuracy. The ratio of degrees of freedom is about $1:2,000$. For the concentrations $p_1$ and  $p_2$, similar results can be obtained from Figs. \ref{err2d-p3-uniada_p1} and \ref{err2d-p3-uniada_p2}. On the other hand, it is shown from Figs. \ref{err2d-p3-uniada_phi}, \ref{err2d-p3-uniada_p1} and \ref{err2d-p3-uniada_p2} that the convergence orders of the error curves (solid line) for the true errors and the error estimators on adaptive meshes are quasi-optimal both for the electrostatic potential and concentrations, which indicates the adaptive finite element computation based on the a posteriori error indicators derived in this paper is efficient for the nonlinear PNP system with a singular solution.

\begin{figure}[H]
  \centerline{
  \includegraphics[height=8.0cm,width=10.0cm]
    {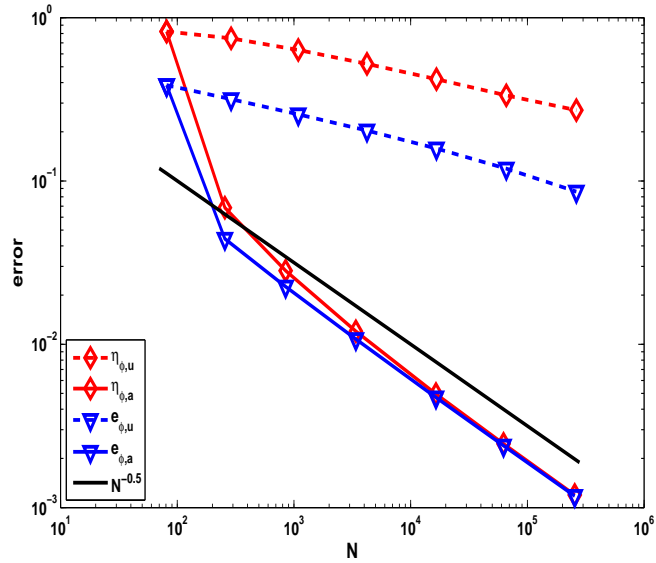}   }
  \caption{The $H^1$ norm errors and error indicators of the electrostatic potential $\phi$ versus the degrees of freedom $N$ of the mesh for Example \ref{exam2dPNP-phisingular-p3} by the uniform refinement (dashed line) and adaptive refinement (solid line). The black solid line is a quasi-optimal convergence curve with slope $-\frac{1}{2}$. }
  \label{err2d-p3-uniada_phi}
\end{figure}

\vspace{-0.8cm}
\begin{figure}[H]
  \centerline{
  \includegraphics[height=8.0cm,width=10.0cm]
    {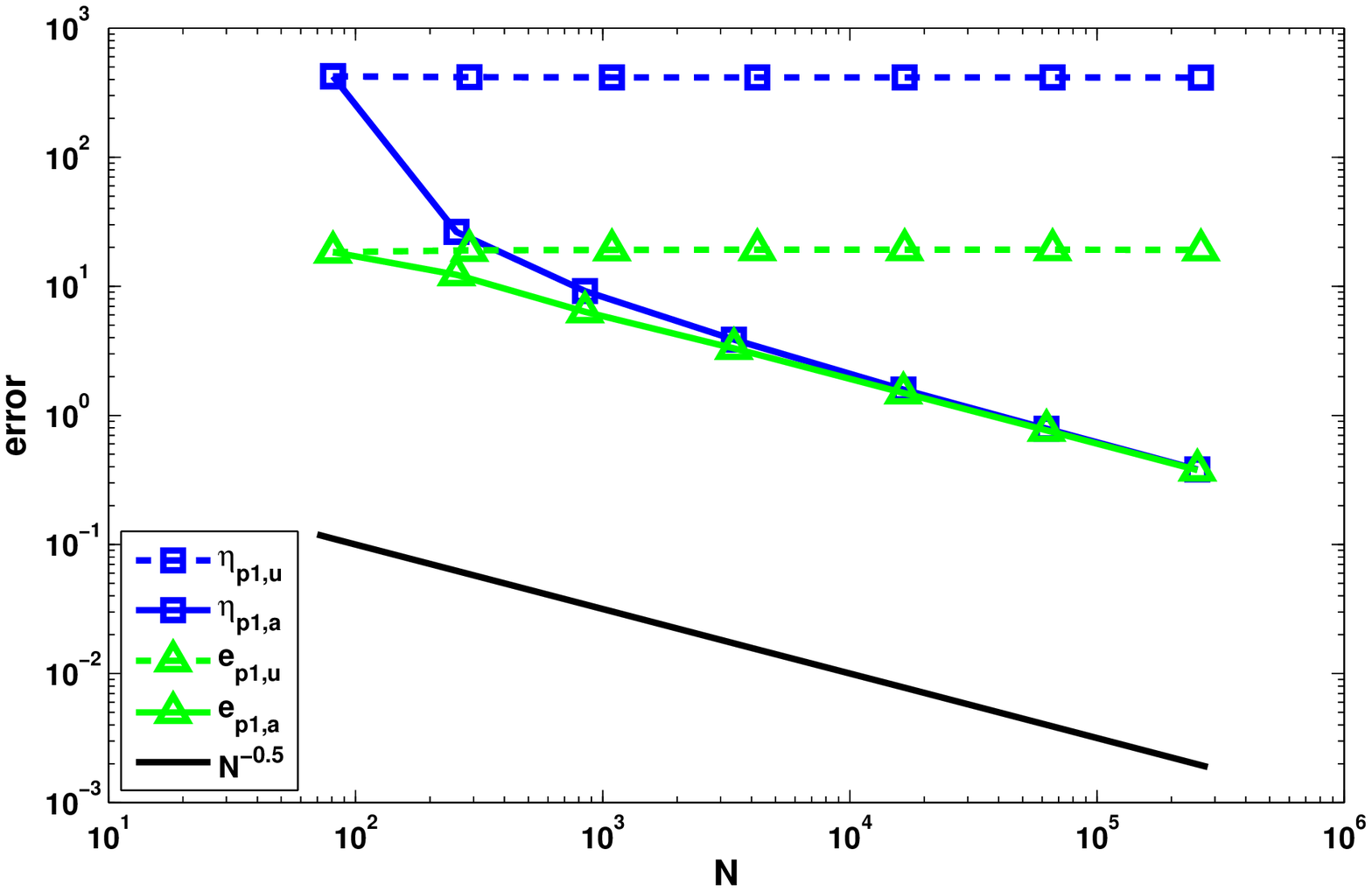}   }
  \caption{The $H^1$ norm errors and error indicators of the positive ion concentration $p_1$ versus the degrees of freedom $N$ of the mesh for Example \ref{exam2dPNP-phisingular-p3} by the uniform refinement (dashed line) and adaptive refinement (solid line). The black solid line is a quasi-optimal convergence curve with slope $-\frac{1}{2}$.}
  \label{err2d-p3-uniada_p1}
\end{figure}

\vspace{-0.8cm}
\begin{figure}[H]
  \centerline{
  \includegraphics[height=8.0cm,width=10.0cm]
    {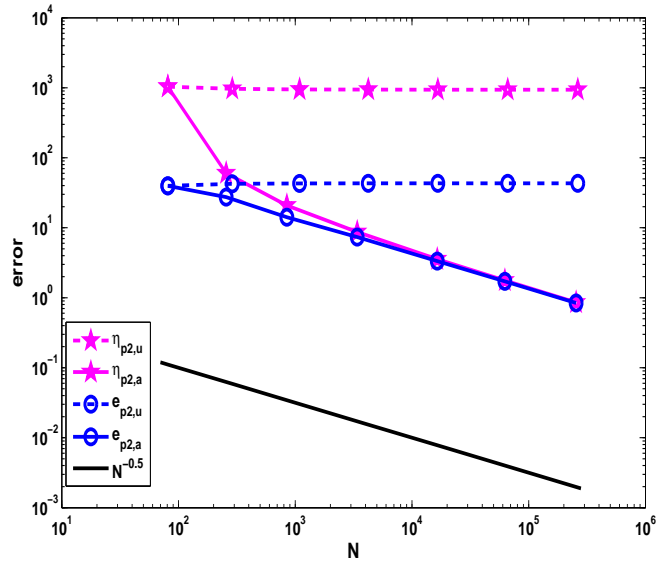}   }
  \caption{The $H^1$ norm errors and error indicators of the negative ion concentration $p_2$ versus the degrees of freedom $N$ of the mesh for Example \ref{exam2dPNP-phisingular-p3} by the uniform refinement (dashed line) and adaptive refinement (solid line). The black solid line is a quasi-optimal convergence curve with slope $-\frac{1}{2}$.}
  \label{err2d-p3-uniada_p2}
\end{figure}

\section{Conclusion} \label{sec-conclusion}

\noindent
 In this paper, we have derived a local averaging type a posteriori error estimators for a class of nonlinear steady-state Poisson-Nernst-Planck equations. Both the global upper bounds and the local lower bounds of the a posteriori error estimators are obtained for the electrostatic potential and concentrations. It is shown by the theoretical analysis and numerical experiments that the adaptive finite element computation based on the a posteriori error estimators is efficient and reliable. 
 The a posteriori error analysis and the corresponding adaptive finite element algorithms can be extended to more general and complex nonlinear PNP equations, for example, the coefficients $\alpha(\cdot,p^i)$ and $\epsilon(x)$ can be discontinuous coefficients or piecewise constants, which will be discussed in our next work for practical ion channel problems.


\begin{acknowledgements}
 Deep thanks must be expressed to Professor Benzhuo Lu for his helpful discussions and valuable suggestions. Thanks also go to Ming Tang for his help on numerical experiments. Y. Yang was supported by the China NSF (NSFC 11561016, NSFC 11771105), Guangxi Colleges and Universities Key Laboratory of Data Analysis and Computation open fund and the Hunan Key Laboratory for Computation and Simulation in Science and Engineering, Xiangtan University. S. Shu was supported by the China NSF (NSFC 11971414).  R. G. Shen was supported by Postgraduate Scientific Research and Innovation Fund of the Hunan Provincial Education Department (CX2017B268).
\end{acknowledgements}

%
%



\end{document}